\theoremstyle{plain}
\newtheorem{theorem}{Theorem}[section]
\newtheorem{lemma}[theorem]{Lemma}
\newtheorem{proposition}[theorem]{Proposition}
\newtheorem{corollary}[theorem]{Corollary}
\theoremstyle{definition}
\newtheorem{definition}{Definition}[section]
\newtheorem{example}{Example}[section]
\theoremstyle{remark}
\newtheorem{remark}[theorem]{Remark}
\title[Hilbert schemes of elliptic surfaces]{Hilbert schemes of elliptic surfaces: group actions and derived categories}
\author{David Zhiyuan Bai}
\date{\today}
\address{Yale University}
\email{david.bai@yale.edu}
\begin{document}

\begin{abstract}
    Let $X\to C$ be an elliptic surface with integral fibers and a section.
    The Hilbert scheme $X^{[n]}$ fibers over $C^{[n]}$.
    We construct a commutative group scheme over the entire base $C^{[n]}$ that embeds as an open subscheme of the Hilbert scheme, such that its action on itself extends to the entirety of $X^{[n]}$.
    We show that the action is $\delta$-regular in the sense of Ng\^o.
    Using the derived McKay correspondence, we construct an exact autoequivalence of $D^b\operatorname{Coh}(X^{[n]})$ whose kernel is a maximal Cohen-Macaulay sheaf on the fiber product.
    We show that this Fourier-Mukai transform intertwines with our group action, i.e.~theorem of the square holds.
    We also discuss the case without a section using the theory of Tate-Shafarevich twists.
\end{abstract}
    \maketitle
    \tableofcontents
    \section{Introduction}
    We work over $k=\mathbb C$.

    Suppose $\mathcal C\to B$ is a family of smooth projective curves, the classical construction of the Jacobian gives rise to a smooth fibration $J_{\mathcal C}\to B$.

    It comes equipped with a package of three properties:
    \begin{enumerate}
        \item (Liouville theorem) The fibration $J_{\mathcal C}\to B$ admits the structure of an abelian scheme.
        Denote its group law by $\mu:J_{\mathcal C}\times_BJ_{\mathcal  C}\to J_{\mathcal C}$.
        \item (Autoduality) There is a Poincar\'e line bundle $\mathcal P$ on $J_{\mathcal C}\times_B J_{\mathcal C}$, invariant under swapping the factors, that induces an exact autoequivalence of $D^b\operatorname{Coh}(J_{\mathcal C})$.
        \item (Theorem of the square) The identity
        \[(\mu\times\operatorname{id})^\ast\mathcal P=\operatorname{pr}_{13}^\ast\mathcal P\otimes\operatorname{pr}_{23}^\ast\mathcal P\]
        holds on $J_{\mathcal C}\times_BJ_{\mathcal C}\times_BJ_{\mathcal C}$, where $\operatorname{pr}_{ij}:J_{\mathcal C}\times_BJ_{\mathcal C}\times_BJ_{\mathcal C}\to J_{\mathcal C}\times_BJ_{\mathcal C}$ is the projection to the $(i,j)$-th factor.
    \end{enumerate}
    A natural question to ask is, then, whether the same package of properties can be found more generally in (not necessarily smooth) fibrations $M\to B$ with a section, such that the general fiber is a principally polarized abelian variety.
    Specifically, one seeks the following:
    \begin{enumerate}
        \item (Liouville theorem) There is an open $M^\circ$ containing the section, and a map $\mu:M^\circ\times_BM\to M$, such that $\mu$ restricts to a commutative group law $M^\circ \times_BM^\circ\to M^\circ$ under which $\mu$ becomes a group scheme action extending the regular action.
        \item (Autoduality) There is a Poincar\'e complex $\mathcal P\in D^b\operatorname{Coh}(M\times_BM)$, invariant under swapping the factors, that induces an exact autoequivalence of $D^b\operatorname{Coh}(M)$.
        \item (Theorem of the square) The restriction $\mathcal P^\circ =\mathcal P|_{M^\circ\times_BM}$ is a line bundle, and
        \[(\mu\times\operatorname{id})^\ast\mathcal P=\operatorname{pr}_{13}^\ast\mathcal P^\circ\otimes\operatorname{pr}_{23}^\ast\mathcal P\]
        holds on $M^\circ\times_BM\times_BM$.
    \end{enumerate}
    Numerous progress has been made on constructing this package in the cases when the fibers are integral.
    The most success has been found when $M$ is a fine compactified Jacobian of a family of reduced curves with planar singularities, where the entire package is now known.
    When the curves are integral, this was established by the seminal work of Arinkin, which was later generalized by Melo, Rapagnetta, and Viviani \cite{mrv1} \cite{mrv2}.

    The properties become much harder when $M$ is not explicitly a moduli space of sheaves compactifying a suitable open of the Picard space of a family of (suitably well-behaved) curves over $B$.
    In this generality, it is not clear where the Liouville theorem should come from.
    When $M\to B$ is a degenerate abelian scheme (e.g.~an algebraic integrable system with integral fibers), this has been established by Arinkin and Fedorov \cite{arinkin-fedorov}.
    In the same paper, they showed a partial autoduality result, but full autoduality remains wide open.

    Few considered situations where the integrality condition on the fibers is dropped.
    In the case of a projective Lagrangian fibration without nowhere reduced fiber (which is automatic when the fibration has a section), Kim \cite[\S4]{kim_neron} recently used the theory of Hodge modules to construct a group scheme action on $M\to B$ that restricts to a free action on the smooth part $M^{\rm sm}\to B$ of the fibration.
    We have been informed of an upcoming follow-up work by de Cataldo, Kim, and Schnell.
    For Hitchin fibrations, a group action is described by de Cataldo, Fringuelli, Fernandez Herrero, and Mauri \cite{hitchingp}.

    We are interested in a class of fibrations constructed from elliptic surfaces.
    Let $\pi:X\to C$ be an elliptic surface.
    We will assume that $X,C$ are both smooth and projective.
    That said, most of our proofs work in the setting where $C$ is only assumed to be quasiprojective.
    And all of our results are local over $C$.

    There is a map $X^{(n)}\to C^{(n)}$ between the symmetric powers of $X$ and $C$.
    Composing it with the Hilbert-Chow morphism gives a fibration
    \[\pi^{[n]}:X^{[n]}\to C^{(n)}=C^{[n]}.\]

    We will consider the case where $\pi$ has integral fibers and a section $s$.
    Then $\pi^{[n]}$ has a section given by $s^{[n]}:C^{[n]}\to X^{[n]}$.

    A generic fiber of $\pi^{[n]}$ would simply be a product of elliptic curves.
    Special fibers of this fibration are, however, rather ill-behaved:
    Even when $\pi$ is smooth, they can be neither reduced nor irreducible.

    Nevertheless, we establish the entire package of properties in this case, thus producing a class of examples with non-reduced fibers where this is done.

    The first is the Liouville theorem.
    \begin{theorem}
        There is an open $X^{[n],\pi}\subset X^{[n]}$ and a map $\mu:X^{[n],\pi}\times X^{[n]}\to X^{[n]}$ that restricts to a commutative group law on $X^{[n],\pi}$ with identity $s^{[n]}$, such that $\mu$ becomes a group scheme action extending the regular action.
    \end{theorem}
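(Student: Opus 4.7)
The plan is to identify $X^{[n],\pi}$ with a component of the relative Picard scheme of the universal family of fibre curves over $C^{[n]}$, and then to extend the tensor action of the Picard to a regular action on all of $X^{[n]}$. First let $\mathcal{D}\subset C\times C^{[n]}$ be the universal degree-$n$ divisor and set $\mathcal{X}_n:=(\pi\times\operatorname{id})^{-1}(\mathcal{D})\subset X\times C^{[n]}$; then $\mathcal{X}_n\to C^{[n]}$ is flat and proper with fibre $X_D=\pi^{-1}(D)$, and the section $s$ yields a canonical relative Cartier divisor $\Sigma\subset\mathcal{X}_n$ of relative degree one, which viewed inside the relative Hilbert scheme is $s^{[n]}$. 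I would define $X^{[n],\pi}\subset X^{[n]}$ as the open locus where the universal subscheme is a relative Cartier divisor in $\mathcal{X}_n\times_{C^{[n]}}X^{[n]}$, equivalently the locus where $Z\subset X_D$ is a section of $X_D\to D$; since $\Sigma$ is fibrewise Cartier, $s^{[n]}(C^{[n]})\subset X^{[n],\pi}$.

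The central observation is that each fibre $X_D$ has arithmetic genus one, which follows from adjunction on $X$ using that $\omega_{X/C}$ restricts trivially to every fibre of $\pi$. Consequently, Riemann--Roch together with cohomology-and-base-change implies that for any line bundle $L$ on $X_D$ of relative degree one over $D$, the pushforward of $L$ to $D$ is invertible and admits a nonzerodivisor section unique up to units. This makes the relative Abel--Jacobi morphism
\[\alpha\colon X^{[n],\pi}\longrightarrow \operatorname{Pic}^{1}(\mathcal{X}_n/C^{[n]}),\qquad Z\longmapsto\mathcal{O}_{X_D}(Z),\]
an isomorphism onto the relative-degree-one component. Using $\Sigma$ to rigidify the torsor $\operatorname{Pic}^{1}\simeq\operatorname{Pic}^{0}$, the latter acquires its canonical commutative group scheme structure over $C^{[n]}$, which $\alpha$ transports to $X^{[n],\pi}$, giving the asserted group law with identity $s^{[n]}$.

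To construct $\mu\colon X^{[n],\pi}\times_{C^{[n]}}X^{[n]}\to X^{[n]}$, I would associate to a pair $(Z_1,Z_2)$ over $D$ the degree-zero line bundle $L:=\mathcal{O}(Z_1)\otimes\mathcal{O}(\Sigma|_{X_D})^{-1}$, tensor it with the rank-one torsion-free ideal sheaf $I_{Z_2}$, and define $Z_3=\mu(Z_1,Z_2)$ via a canonical nonzerodivisor embedding $L\otimes I_{Z_2}\hookrightarrow\mathcal{O}_{X_D}$, taking $Z_3$ to be the support of the cokernel. That $\mu$ restricts to the group law on $X^{[n],\pi}\times_{C^{[n]}}X^{[n],\pi}$ and satisfies the action axioms then follows directly from the Abel--Jacobi identification.

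The hard part will be producing this canonical embedding for $Z_2$ not Cartier: the tensor operation on torsion-free sheaves over the generally non-reduced $X_D$ requires care, and the existing frameworks of Altman--Kleiman and Melo--Rapagnetta--Viviani handle only the reduced setting. A likely alternative is to construct $\mu$ first on the open subset $X^{[n],\pi}\times_{C^{[n]}}X^{[n],\pi}$ via the group law and then extend it to all of $X^{[n],\pi}\times_{C^{[n]}}X^{[n]}$ using Weil's extension theorem for rational actions of smooth group schemes on smooth proper varieties, which applies because $X^{[n]}$ is smooth and projective.
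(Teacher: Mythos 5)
Your construction of the group law on $X^{[n],\pi}$ via the Abel--Jacobi map to $\operatorname{Pic}(\mathcal X_n/C^{[n]})$ is in the same spirit as the paper (which identifies $X^{[n],\pi}_W$ with line bundles on the thickened curve $E_W$, via the relative Fourier--Mukai transform rather than $Z\mapsto\mathcal O_{X_D}(Z)$), and it can probably be pushed through, but not by the argument you sketch. Riemann--Roch does not give $h^0(\mathcal O_{X_D}(Z))=1$: on a connected thickened fiber $nX_p$ one has $h^0(\mathcal O_{X_D})=n$ and $h^0(\mathcal O_{X_D}(Z))=n$, so the linear system is large and injectivity of Abel--Jacobi rests entirely on your parenthetical claim that nonzerodivisor sections form a single orbit under $H^0(\mathcal O_{X_D})^\times$; that statement needs a real proof (cohomology and base change over the Artinian $D$, plus an analysis of which sections are zerodivisors), as does representability of the relative Picard functor for this family whose special fibers are nowhere reduced. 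Also, ``$Z$ is a relative Cartier divisor'' is \emph{not} equivalent to ``$Z$ is a section of $X_D\to D$'': a section through a fiber singularity is not Cartier and must be excluded (this is the paper's condition $Z\subset X^\circ$).

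The genuine gap is the extension of $\mu$ to $X^{[n],\pi}\times_{C^{[n]}}X^{[n]}$, which is the actual content of the theorem. Your primary route (a ``canonical nonzerodivisor embedding $L\otimes I_{Z_2}\hookrightarrow\mathcal O_{X_D}$'') is exactly the hard point you concede, and even where the pointwise recipe makes sense you never address algebraicity: a pointwise assignment of subschemes does not define a morphism, and in this setting a $T$-flat family on $X\times T$ is \emph{not} determined up to $\operatorname{Pic}(T)$ by its fibers (the paper gives an explicit counterexample with a ribbon on an elliptic fibration), which is why it passes to ideal sheaves, characterizes $\mathcal F(\mathcal I_Z)[1]$ among extensions by simplicity, and invokes a seesaw principle for simple sheaves. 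Your fallback is not available: there is no ``Weil extension theorem'' asserting that a regular action of a smooth group scheme on an open subscheme of a smooth proper variety extends to the whole variety. Weil's regularization theorem only produces a regular action on \emph{some} birational model, not on the given $X^{[n]}$, and translation actions on open parts of degenerate fibrations genuinely can fail to extend across the boundary --- whether they extend here is precisely what must be proven, and in the paper it is proven by exhibiting the boundary points as (Fourier--Mukai images of) sheaves on the curves $D_W$ on which the group acts by tensoring. So the proposal establishes, modulo the caveats above, the group scheme structure on $X^{[n],\pi}$, but not the action on $X^{[n]}$ nor its algebraicity.
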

    In addition to its existence, the action can be interpreted geometrically.
    Recall that there is a Fourier-Mukai autoequivalence of the elliptic surface relative to its base, identifying it with its own relative compactified Jacobian.
    The transform of a length $n$ subscheme $Z$ would be a sheaf supported on a non-reduced curve, namely the preimage of $\pi^{[n]}(Z)\in C^{[n]}$.
    The group scheme $X^{[n],\pi}$ turns out to be the locus where this Fourier-Mukai image is an invertible sheaf on said curve.
    And, on $k$-points, the group action can be described as taking tensor products there.

    This construction is done in Section \ref{ss:groupschemepart} along with a concrete description of finite subschemes belonging to $X^{[n],\pi}$.

    It is not obvious how the action extends to families.
    We address this issue in Section \ref{ss:algebraicity}.
    Additionally, we make the observation that the group action is a $\delta$-regular abelian fibration (Remark \ref{deltareg}), which may be of independent interest.

    The second part of the package is autoduality.
    \begin{theorem}
        There is a maximal Cohen-Macaulay sheaf $\mathcal P_n^{\rm BKR}$ on $X^{[n]}\times_{C^{[n]}}X^{[n]}$, invariant under swapping the factors, that induces an exact autoequivalence of $D^b\operatorname{Coh}(X^{[n]})$.
    \end{theorem}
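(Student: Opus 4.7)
The strategy is to construct $\mathcal{P}_n^{\rm BKR}$ by taking the $n$-fold external power of the relative Poincar\'e sheaf on $X\times_C X$ and descending the resulting $S_n$-equivariant kernel to $X^{[n]}\times X^{[n]}$ via the derived McKay correspondence of Bridgeland--King--Reid and Haiman. Since $\pi$ has integral fibers and a section, the elliptic surface is self-dual as a family over $C$: there is a maximal Cohen--Macaulay rank one sheaf $\mathcal{P}$ on $X\times_C X$, symmetric under swapping factors, whose Fourier--Mukai transform is a relative autoequivalence of $D^b\operatorname{Coh}(X)$, due to Bridgeland--Maciocia. This is the same $\mathcal{P}$ implicit in the geometric interpretation of the group action given in the introduction.

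The external product $\mathcal{P}^{\boxtimes n}$ on $(X\times_C X)^n=X^n\times_{C^n}X^n$ is $S_n$-equivariant for the diagonal permutation action on both factors, and as a Fourier--Mukai kernel it induces an $S_n$-equivariant autoequivalence of $D^b\operatorname{Coh}(X^n)$. The BKR--Haiman theorem gives an equivalence $\Psi:D^b\operatorname{Coh}(X^{[n]})\simeq D^b_{S_n}\operatorname{Coh}(X^n)$ via the isospectral Hilbert scheme $Z_n\to X^n$; conjugating through $\Psi$ produces the desired autoequivalence of $D^b\operatorname{Coh}(X^{[n]})$. To realize it as a Fourier--Mukai transform with an explicit kernel on $X^{[n]}\times_{C^{[n]}}X^{[n]}$, I would convolve $\mathcal{P}^{\boxtimes n}$ on the left and right with the structure sheaf of $Z_n$ (regarded as a kernel from $X^{[n]}$ to $X^n$), then pass to $S_n\times S_n$-invariants of the pushforward to $X^{[n]}\times X^{[n]}$. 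Swap-symmetry then descends from the corresponding property of $\mathcal{P}$, and the support statement is automatic: $\mathcal{P}^{\boxtimes n}$ lives on $X^n\times_{C^n}X^n$, whose image in $X^{[n]}\times X^{[n]}$ is exactly $X^{[n]}\times_{C^{[n]}}X^{[n]}$ since $C^{(n)}=C^{[n]}$.

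The principal obstacle is showing that the resulting object $\mathcal{P}_n^{\rm BKR}$ is a sheaf, and moreover maximal Cohen--Macaulay on $X^{[n]}\times_{C^{[n]}}X^{[n]}$; neither property is formal from the corresponding property of $\mathcal{P}$. The key inputs I would use are Haiman's $n!$-theorem, which gives that $Z_n$ is Cohen--Macaulay and $Z_n\to X^{[n]}$ is flat, together with the MCM property of $\mathcal{P}$. The plan is to check the depth condition locally, stratifying $X^n\times_{C^n}X^n$ by the partition types of how the two $n$-tuples distribute over fibers of $\pi$, and running a base change argument along the flat Haiman projection to lift the MCM bound from $Z_n\times Z_n$ to $X^{[n]}\times_{C^{[n]}}X^{[n]}$. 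Concentration in a single cohomological degree should follow from the same flatness combined with the fact that $\mathcal{P}^{\boxtimes n}$ is already a sheaf, forcing the relevant higher direct images to vanish.
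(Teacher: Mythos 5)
Your construction is the same as the paper's: form $\mathcal P^{\boxtimes n}$ from Bridgeland's relative Poincar\'e sheaf, upgrade it to an $S_n$-equivariant equivalence (Ploog), and conjugate by the Bridgeland--King--Reid equivalence and its transpose (Krug), with the symmetry and the support on $X^{[n]}\times_{C^{[n]}}X^{[n]}$ handled as you indicate. The gap is precisely in the step you yourself flag as the principal obstacle. Your plan for showing that the composed kernel is a single maximal Cohen--Macaulay sheaf leans on the flatness of the Haiman map $q\colon \tilde X_n\to X^{[n]}$, on vanishing of higher direct images, and on a stratification by partition type; none of this touches the real difficulty, which sits on the other side of the roof: convolving with $\mathcal O_{\tilde X_n}$ amounts, after base change over the second factor, to the derived pullback $\mathbf L p^\ast$ along the birational, \emph{non-flat} morphism $p\colon\tilde X_n\to X^n$, so a priori the result has cohomology in several degrees and no flat base-change argument is available. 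The paper's fix (Proposition \ref{BKR_congruence} and the lemma preceding it) is Arinkin's lemma: $p$ has finite Tor-dimension because its target is smooth, $\tilde X_n$ is Gorenstein --- Haiman gives Gorenstein, not merely Cohen--Macaulay, and Gorenstein is what this argument needs --- and the derived pullback of an MCM sheaf along a finite-Tor-dimension morphism between Gorenstein schemes is again an MCM sheaf concentrated in degree zero. Once the $p$-side is settled, the $q$-side is the easy part: $q$ is finite flat, its pushforward is exact, the invariant piece is a direct summand, and direct summands of MCM sheaves are MCM; no stratification is needed.

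Two smaller points. First, you treat the MCM property of $\mathcal P^{\boxtimes n}$ on $X^n\times_{C^n}X^n$ as formal from that of $\mathcal P$; it is not, since $X\times_CX$ is singular, and the paper proves it (Proposition \ref{mcm_outer_tensor}) by embedding into smooth ambient varieties and running a duality computation --- that the exterior product is a sheaf is automatic, the depth bound is the content. Second, ``convolve with $\mathcal P^{\boxtimes n}$ and take $S_n\times S_n$-invariants'' does not literally parse, because $\mathcal P^{\boxtimes n}$ carries only the diagonal $S_n$-linearization; to obtain an equivalence of the $S_n$-equivariant category you should use Ploog's induced kernel $\operatorname{ind}_{S_n}^{S_n\times S_n}(\mathcal P^{\boxtimes n},\rho_{\rm perm})$, as the paper does, and then descend by convolving with $\mathcal O_{\tilde X_n}$ and its transpose at the level of quotient stacks.
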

    The construction follows an idea of Groechenig \cite{groec}, who proved the corresponding result for certain parabolic Hitchin systems.

    The $n$-fold outer tensor of the Fourier-Mukai kernel on the elliptic surface gives rise to an autoequivalence $\mathcal F^{\boxtimes n}$ of $D^b\operatorname{Coh}_{S_n}(X^n)$ by the work of Ploog \cite{ploog}.

    On the other hand, the derived McKay correspondence developed by Bridgeland, King, and Reid \cite{BKR} shows that the structure sheaf of the isospectral Hilbert scheme $\tilde{X}_n\hookrightarrow X^{[n]}\times X^n$ induces an equivalence
    \[\operatorname{BKR}:D^b\operatorname{Coh}(X^{[n]})\to D^b\operatorname{Coh}([X^n/S_n])=D^b\operatorname{Coh}_{S^n}(X^n).\]
    The same kernel induces an equivalence $\operatorname{BKR}^\top:D^b\operatorname{Coh}_{S_n}(X^n)\to D^b\operatorname{Coh}(X^{[n]})$ the other way around, as studied by Krug \cite{krug}.

    The sheaf $\mathcal P_n^{\rm BKR}$ is the kernel of the composition $\operatorname{BKR}^\top\circ\mathcal F^{\boxtimes n}\circ\operatorname{BKR}$.
    The claim that it has the properties we need is shown in Section \ref{ss:BKR}.

    The last piece, which is what ties these together, is the theorem of the square.
    \begin{theorem}[Theorem of the square]
        The sheaf $\mathcal P_n^{\rm BKR}$ restricts to a line bundle $\mathcal P_n^{\rm BKR,\circ}$ on ${X^{[n],\pi}\times_{C^{[n]}}X^{[n]}}$, and the identity
        \[(\mu\times\operatorname{id})^\ast\mathcal P^{\rm BKR}_n=\operatorname{pr}_{13}^\ast\mathcal P^{\rm BKR,\circ}_n\otimes\operatorname{pr}_{23}^\ast\mathcal P^{\rm BKR}_n\]
        holds on $X^{[n],\pi}\times_{C^{[n]}}X^{[n]}\times_{C^{[n]}}X^{[n]}$.
    \end{theorem}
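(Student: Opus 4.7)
My plan is to reduce the theorem of the square on $X^{[n]}$ to the classical theorem of the square for the Fourier-Mukai kernel $\mathcal F$ on $X\times_C X$, transporting it through the $n$-fold outer product and across the BKR correspondence.

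First I would establish the base case on $X^\pi\times_C X\times_C X$: the identity
\[(\mu_X\times\operatorname{id})^\ast\mathcal F\cong\operatorname{pr}_{13}^\ast\mathcal F^\circ\otimes\operatorname{pr}_{23}^\ast\mathcal F,\]
where $\mu_X$ is the group law on the smooth locus $X^\pi$ of $\pi:X\to C$ (using the section $s$) and $\mathcal F^\circ$ is the restriction of $\mathcal F$ to $X^\pi\times_C X$. This is essentially contained in Arinkin's autoduality for compactified Jacobians of families of integral curves with planar singularities, applied to the fact that $X$ is its own relative compactified Jacobian. Taking the $n$-fold outer product, using that outer products commute with pullbacks and tensor products along disjoint factors, yields the theorem of the square for $\mathcal F^{\boxtimes n}$ on $(X^\pi)^n\times_{C^n}X^n\times_{C^n}X^n$ with respect to the componentwise group law, and this identity is $S_n$-equivariant under the diagonal action.

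Next I would transfer the identity across BKR. Since $\mathcal P_n^{\rm BKR}$ is defined as the kernel of $\operatorname{BKR}^\top\circ\mathcal F^{\boxtimes n}\circ\operatorname{BKR}$, the $S_n$-equivariant identity for $\mathcal F^{\boxtimes n}$ translates term-by-term into the desired identity for $\mathcal P_n^{\rm BKR}$, once the group actions on the two sides are matched. On the dense open of $X^{[n],\pi}$ parametrizing $n$ distinct points in $n$ distinct smooth fibers, $X^{[n],\pi}$ coincides with the quotient of $(X^\pi)^n$ by $S_n$, and $\mu$ is componentwise translation; this follows tautologically from the description of $\mu$ via Fourier-Mukai and tensor products given in Section \ref{ss:groupschemepart}. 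The line bundle claim for $\mathcal P_n^{\rm BKR,\circ}$ is a byproduct: it corresponds under BKR to the $S_n$-equivariant line bundle $(\mathcal F^\circ)^{\boxtimes n}$, whose descent remains invertible on the locus where $S_n$ acts with sufficiently small stabilizers.

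The main obstacle will be extending the matching of group actions, and hence the identity, from the dense open to all of $X^{[n],\pi}\times_{C^{[n]}}X^{[n]}\times_{C^{[n]}}X^{[n]}$, where the Hilbert--Chow image has coincidences and the correspondence between $X^{[n],\pi}$ and $(X^\pi)^n/S_n$ becomes stacky. I would handle this by sheaf-theoretic means: both sides of the claimed identity are (pullbacks or tensor products of) maximal Cohen-Macaulay sheaves on the fiber product, hence reflexive, so agreement on a codimension-one-dense open forces agreement everywhere. The algebraicity of the $\mu$-action, established in Section \ref{ss:algebraicity}, is what makes this extension step sensible; without it, one cannot speak of $(\mu\times\operatorname{id})^\ast\mathcal P_n^{\rm BKR}$ as a sheaf on the whole base at all.
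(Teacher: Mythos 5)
Your extension step is where the argument breaks down. You propose to verify the identity on the locus where the subschemes are $n$ distinct points lying in $n$ distinct smooth fibers, and then conclude by maximal Cohen--Macaulayness/reflexivity. But that principle only lets you extend an identity across a closed subset of codimension at least $2$, and the complement of your open set has codimension $1$: the locus in $C^{[n]}$ where two points of the cycle collide is a divisor, as is the locus where some point hits a singular fiber, and both pull back to divisors in $X^{[n],\pi}\times_{C^{[n]}}X^{[n]}\times_{C^{[n]}}X^{[n]}$. This is precisely the difficulty the paper flags (``contrary to the case of compactified Jacobian fibrations, the complement of the smooth locus of this relative product has codimension $1$''), and it is why the paper must work over a larger open $U\subset C^{[n]}$ that includes cycles with exactly one multiplicity-$2$ point over a smooth fiber, together with the open $X^{[n],\pi,\sharp}_U$ of subschemes whose unique non-reduced component is transverse to its fiber. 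Handling these strata is not a formality: it requires computing $\operatorname{BKR}(k(Z))$ as an $S_n$-cluster for connected non-reduced $Z$, identifying $(X^2)_{\operatorname{BKR}_W}$ with $X_W\times_{\operatorname{BKR}_W}X_W$ for $W=2p$, and explicitly evaluating $\mathcal F^{\boxtimes 2}$ on these clusters before pushing through $\operatorname{BKR}^\top$ with a projection-formula argument. None of this is supplied by the outer product of the surface-level theorem of the square, which only sees the reduced locus.

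A secondary gap: even on your chosen open, ``transferring the identity term-by-term across BKR'' is asserted rather than argued, and it is not how one compares families of sheaves here. The paper reduces the family statement to a fiberwise statement via a seesaw principle for \emph{simple} sheaves (proved analytically, since the algebraic moduli of simple sheaves is not known to be locally separated), which leaves an ambiguity by a line bundle pulled back from the parameter space; that ambiguity is killed by showing $\mathcal P_n^{\rm BKR}$ restricts trivially over the section $s^{[n]}$. Your proposal contains no mechanism playing the role of either the seesaw or the normalization along the section, so even granting your equivariant identity for $\mathcal F^{\boxtimes n}$, you have not matched the two sides as sheaves on the triple fiber product rather than merely fiber by fiber on a dense open. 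The MCM reflexivity you invoke is the right tool for the final extension, but only after the identity is established over an open with codimension-$2$ complement, which your argument never reaches.
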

    This is proved in Section \ref{ss:tots}.
    Since $\mathcal P_n^{\rm BKR}$, and hence both sides of the equation, are maximal Cohen-Macaulay, this identity can be checked away from a closed subset of codimension $2$.
    Contrary to the case of compactified Jacobian fibrations, the complement of the smooth locus of this relative product has codimension $1$.
    So a more complicated open was used to prove this.

    Finally, we investigate in Section \ref{ss:tatesha} how this package helps us understand the Hilbert scheme of elliptic surfaces without a section (but still with integral fibers).
    The action $\mu$ allows sections of $X^{[n],\pi}$ to act by isomorphisms on $X^{[n]}$ relative to the base.
    So any $1$-cocycle $\Sigma$ of analytic local sections of $X^{[n],\pi}$ defines a Tate-Shafarevich twist $(X^{[n]})^\Sigma$ of $X^{[n]}$ relative to $C^{[n]}$.
    \begin{theorem}
        Suppose $Y\to C$ is an elliptic surface with integral fibers and $X\to C$ is its compactified Jacobian.
        Then
        \begin{enumerate}
            \item There is some $1$-cocycle $\Sigma$ of sections of $X^{[n],\pi}$ such that $Y^{[n]}=(X^{[n]})^\Sigma$.
            \item Under this Tate-Shafarevich twist, the sheaf $\mathcal P_n^{\operatorname{BKR}}$ glues to a $(1\boxtimes\alpha)$-twisted sheaf on $Y^{[n]}\times_{C^{[n]}}X^{[n]}$ for some Brauer class $\alpha$ on $X^{[n]}$, and it induces an exact equivalence
            \[D^b\operatorname{Coh}(Y^{[n]})\cong D^b\operatorname{Coh}(X^{[n]},\alpha).\]
        \end{enumerate}
    \end{theorem}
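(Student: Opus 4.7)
My plan begins with part (1), which is a \v Cech-cohomological construction. Since $X\to C$ is the compactified Jacobian of $Y\to C$, the smooth locus $Y^{\mathrm{sm}}$ is a torsor under the smooth locus of $X$, so $Y$ is analytically locally trivializable over $X$. I would pick an analytic cover $\{U_i\}_{i\in I}$ of $C$ together with isomorphisms $\phi_i:X|_{U_i}\xrightarrow{\sim} Y|_{U_i}$ over $C$, so that the transitions $\phi_j^{-1}\phi_i$ are translation by sections $\sigma_{ij}:U_{ij}\to X$ forming a $1$-cocycle that recovers $Y$. Passing to $n$-th Hilbert schemes, each $\sigma_{ij}$ induces a morphism $\sigma_{ij}^{[n]}$ from the symmetric power of $U_{ij}$ to $X^{[n]}$ sending a length-$n$ subscheme $D$ to its image $\sigma_{ij}(D)\subset X$. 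On every fiber of $\pi^{[n]}$ this image is a Cartier divisor supported on the smooth section $\sigma_{ij}(C)$, so the geometric description of $X^{[n],\pi}$ recalled in the introduction forces the morphism to factor through $X^{[n],\pi}$. Covering $C^{[n]}$ by analytic opens of the form $(U_{i_1}\times\cdots\times U_{i_n})/S_n$ and combining the $\sigma_{ij}^{[n]}$'s via the group law on $X^{[n],\pi}$ produces the desired $1$-cocycle $\Sigma$, and the isomorphisms $\phi_i^{[n]}$ assemble into an identification $(X^{[n]})^\Sigma\cong Y^{[n]}$.

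For part (2), I would try to glue copies of $\mathcal P_n^{\operatorname{BKR}}$ along the same cover using translation on the first factor. The theorem of the square, applied to a section $\tau$ of $X^{[n],\pi}$ defined over an open $V\subset C^{[n]}$, gives
\[(t_\tau\times\operatorname{id})^\ast\mathcal P_n^{\operatorname{BKR}}\cong\mathcal P_n^{\operatorname{BKR}}\otimes\operatorname{pr}_2^\ast L_\tau,\]
where $t_\tau$ is translation by $\tau$ and $L_\tau$ is the line bundle on $X^{[n]}|_V$ obtained by restricting $\mathcal P_n^{\operatorname{BKR},\circ}$ along $\tau$. Taking $\tau=\Sigma_{ij}$ shows that the naive gluing data on double overlaps exist only up to tensoring by pullbacks from the second factor of line bundles $L_{ij}$ on $X^{[n]}|_{U_{ij}^{[n]}}$. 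Iterating the theorem of the square and using the cocycle identity $\Sigma_{ij}+\Sigma_{jk}=\Sigma_{ik}$ in $X^{[n],\pi}$ yields isomorphisms $L_{ij}\otimes L_{jk}\cong L_{ik}$, so the $\{L_{ij}\}$ form a $1$-cocycle in the Picard groupoid. The $\mathbb G_m$-gerbe measuring its failure to rigidify is the desired Brauer class $\alpha\in\operatorname{Br}(X^{[n]})$, and the pairs $(\mathcal P_n^{\operatorname{BKR}}|_{U_i^{[n]}},L_{ij})$ then assemble into a $(1\boxtimes\alpha)$-twisted coherent sheaf on $Y^{[n]}\times_{C^{[n]}}X^{[n]}$.

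To see that this kernel induces an equivalence of derived categories, I would argue locally on $C^{[n]}$: by analytic descent along the cover $\{U_i^{[n]}\}$ the assertion reduces to the autoequivalence of $D^b\operatorname{Coh}(X^{[n]})$ established in Section \ref{ss:BKR}. The main obstacle I anticipate is the bookkeeping for the Brauer class. The theorem of the square provides an isomorphism of sheaves rather than rigidified data, so one must track the ambiguities carefully to ensure that $\alpha$ is well-defined as a class in $H^2(X^{[n]},\mathcal O^\times)$ and independent of the cover and of the intermediate choices. Once this compatibility is verified, the remainder of the argument is essentially formal.
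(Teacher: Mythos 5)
Your part (2) follows essentially the paper's route: apply the theorem of the square to the transition sections to get line bundles $L_{U,V}$ on double overlaps, use the cocycle property of $\Sigma$ to get $L_{U,V}\otimes L_{V,W}\cong L_{U,W}$, read off a gerbe/Brauer class $\alpha$, and glue the restrictions of $\mathcal P_n^{\rm BKR}$ to a $(1\boxtimes\alpha)$-twisted sheaf. For the final equivalence the paper is more precise than your ``argue locally by analytic descent'': it invokes C\u{a}ld\u{a}raru's criterion for twisted Fourier--Mukai equivalences, for which one needs the observation that $\omega_{X^{[n]}}$ is pulled back from $C^{[n]}$ (canonical bundle formula); a purely local-over-the-base descent of a derived equivalence is not something you can assert without an argument, so you should either reduce to a pointwise/kernel-composition criterion or cite the twisted criterion as the paper does.

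The genuine gap is in part (1), in how you cover $C^{[n]}$. Starting from an arbitrary trivializing cover $\{U_i\}$ of $C$, the opens $U_i^{[n]}$ do not cover $C^{[n]}$ (a divisor whose support is spread over several $U_i$'s is missed), which is why you pass to charts that are images of mixed products $U_{i_1}\times\cdots\times U_{i_n}$ and propose to build transition sections by ``combining the $\sigma_{ij}^{[n]}$'s via the group law''. As stated this is not well defined: a point of such a chart is an unordered divisor $D=p_1+\cdots+p_n$, and the assignment of its points to the indices $i_k$ (hence the choice of which $\sigma_{i_kj_k}$ to apply to which point) is non-canonical whenever a point lies in several $U_i$'s, and it degenerates at non-reduced divisors; consequently neither the transition maps nor the cocycle identity on triple overlaps come for free, and this is precisely the combinatorial difficulty the construction must overcome. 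The paper's key device, absent from your proposal, is to choose an \emph{$n$-regular} analytic cover $\mathcal U$ of $C$ (every $n$ points of $C$ lie in a single $U\in\mathcal U$) over which $Y$ has sections; then $\{U^{[n]}\}$ already covers $C^{[n]}$, intersections satisfy $U^{[n]}\cap V^{[n]}=(U\cap V)^{[n]}$, the transition section is simply $\Sigma_{U^{[n]},V^{[n]}}=(\sigma_{U,V})^{[n]}$ (a single surface-level section applied diagonally, which lands in $X^{[n],\pi}$ exactly as you argue), and both the cocycle identity and $Y^{[n]}=(X^{[n]})^{\Sigma}$ are checked by comparing with the automorphisms induced from the surface on a dense open of reduced subschemes. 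Either adopt this $n$-regularity trick or supply a genuine well-definedness and cocycle argument for your mixed-product charts; without one of these, part (1) is incomplete.
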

    \subsubsection*{Acknowledgements}
    The author would like to thank Junliang Shen for his encouragement and valuable suggestions, and David Fang, Weite Pi, Soumik Ghosh, Matthew Huyuh for helpful discussions.
    The author would also like to thank Andres Fernandez Herrero for pointing out an error in a previous version of this paper.

    The author was partially supported by NSF grant DMS-2301474.
    \section{The group scheme part of the Hilbert scheme}\label{ss:groupschemepart}
    Let $\pi:X\to C$ be an elliptic surface with integral fibers and a section $s:C\to X$.
    We assume that $X,C$ are both smooth projective.

    Singular fibers of $X$ are then either nodal or cuspidal cubics, with a unique singular point not on the section.
    Removing these points gives a smooth fibration $X^\circ\to C$.

    In \cite{bridgeland1998}, Bridgeland constructed a sheaf $\mathcal P\in\operatorname{Coh}(X\times_CX)$ that induces an autoequivalence $\mathcal F$ of $D^b\operatorname{Coh}(X)$ extending the usual Poincar\'e sheaf on the smooth fibers.

    The transform $\mathcal F$ makes $X/C$ self-dual, in the following sense:
    \begin{theorem}[\cite{bridgeland1998}, \cite{cubicdual}]\label{surfacegroupscheme}
        For any $p\in C$, the assignment $x\mapsto\mathcal F(k(x))$ identifies $X_p$ with the space of torsion-free sheaves of rank $1$ and degree $0$ on $X_p$.

        Moreover, the open subvariety consisting of invertible sheaves correspond precisely to $X^\circ_p$, with $\mathcal F(k(s(p)))=\mathcal O_{X_p}$.
        And tensor product makes $X^\circ\to C$ a group scheme with unit section $s$ whose action on itself extends to $X$.
    \end{theorem}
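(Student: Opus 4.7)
The plan is to reduce to known results about Fourier--Mukai duality for integral arithmetic-genus-one curves and then promote them to a relative statement over $C$.

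First I would verify the fiberwise identification. By construction of the Bridgeland kernel, $\mathcal P|_{X_p\times X_p}$ is the standard Poincar\'e sheaf on the integral Weierstrass cubic $X_p$ normalized along $s(p)$; hence $\mathcal F(k(x))=\mathcal I_x\otimes\mathcal O_{X_p}(s(p))$, which is torsion-free of rank $1$ and degree $0$, and equals $\mathcal O_{X_p}$ when $x=s(p)$. That the resulting assignment gives a bijection $X_p\to\overline{\operatorname{Pic}}{}^0(X_p)$ carrying $X_p^\circ$ onto $\operatorname{Pic}^0(X_p)$ is the classical autoduality of smooth genus $1$ curves together with its extension to nodal and cuspidal Weierstrass cubics in \cite{cubicdual}.

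Next I would upgrade to families. Since $\pi$ has integral fibers with planar singularities and a section, the Altman--Kleiman theory produces a projective relative compactified Jacobian $\overline{\operatorname{Pic}}{}^0(X/C)$ whose smooth locus $\operatorname{Pic}^0(X/C)$ is a commutative group scheme over $C$ with identity given by the section. The fact that $\mathcal P$ matches the universal rank $1$ torsion-free sheaf on every fiber, combined with a see-saw argument using the normalization along the section, yields an isomorphism $X\simeq\overline{\operatorname{Pic}}{}^0(X/C)$ of $C$-schemes that identifies $X^\circ$ with $\operatorname{Pic}^0(X/C)$. Transporting the group structure across this isomorphism gives the asserted group scheme on $X^\circ/C$ with unit $s$.

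Finally, the action $X^\circ\times_C X\to X$ is obtained by tensoring the universal line bundle on $X^\circ\times_C X$ with the pullback of the universal torsion-free sheaf on $X\times_C X$ and invoking the moduli property of $X\simeq\overline{\operatorname{Pic}}{}^0(X/C)$ to classify the resulting family. The main subtlety I expect is verifying that the tensor product stays flat with torsion-free rank $1$ degree $0$ fibers across singular fibers; this is routine since line bundle twists commute with base change and preserve depth and rank, but it is the place where the assumption that $X^\circ$ parametrizes \emph{invertible} sheaves, as opposed to arbitrary torsion-free rank $1$ sheaves, is essential.
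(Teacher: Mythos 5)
This statement is not proved in the paper at all: it is imported as a black box from \cite{bridgeland1998} and \cite{cubicdual}, so there is no internal argument to compare against. Your sketch is a reasonable reconstruction of how those sources assemble the result: fiberwise, the transform sends $k(x)$ to a rank $1$, degree $0$ torsion-free sheaf (non-invertible exactly when $x$ is the fiber singularity), and the relative statement follows from the Altman--Kleiman compactified Jacobian $\overline{\operatorname{Pic}}^0(X/C)$ (cf.\ \cite{altklei}), which is a fine moduli space here because of the section, together with the degree-one Abel map being an isomorphism for integral arithmetic-genus-one curves; the action is then tensoring by the universal line bundle and classifying via the moduli property, which indeed preserves flatness, rank, torsion-freeness and $\chi$. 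Two caveats are worth flagging. First, the explicit formula $\mathcal F(k(x))=\mathcal I_x\otimes\mathcal O_{X_p}(s(p))$ is only correct up to the chosen normalization of Bridgeland's kernel (twists by line bundles pulled back from the factors or from $C$); the paper only fixes $\Phi_{\mathcal P}(s_\ast\mathcal O)=\mathcal O$ later, so you should either state the formula up to such a twist or fix the normalization first, and recall that ``degree $0$'' on a singular fiber means $\chi(F)=\chi(\mathcal O_{X_p})$. Second, your ``see-saw argument'' comparing $\mathcal P$ with the Altman--Kleiman universal sheaf is not the classical see-saw for line bundles: the fibers are only simple torsion-free sheaves, and a see-saw principle at that level of generality is exactly the nontrivial point the paper itself has to address (Proposition \ref{seesaw}, proved there by analytic means); alternatively you can avoid it entirely by using flatness of $\mathcal P$ over the second factor to get the classifying map $X\to\overline{\operatorname{Pic}}^0(X/C)$ directly, which suffices for the statement. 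With those adjustments the outline is sound and matches the intent of the cited references.
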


    The starting point of our construction is the consideration of the image of connected finite subschemes under $\mathcal F$.
    We will repeatedly and implicitly use the following elementary fact:
    \begin{lemma}
        Let $\mathcal A$ be an abelian category with enough injectives and $A,B\in\mathcal A$, if $E\in D(\mathcal A)$ fits in an exact triangle of the form
        \[E\to A\to B[1]\to E[1],\]
        then $E$ is (quasi-isomorphic to) an object of $\mathcal A$, and it is an extension of $A$ by $B$.
    \end{lemma}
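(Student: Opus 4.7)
The plan is to rotate the given triangle and read off the cohomology of $E$ using the long exact sequence associated to the standard $t$-structure on $D(\mathcal A)$ (which exists since $\mathcal A$ has enough injectives, so that we may identify $D(\mathcal A)$ with its derived category in the usual sense and apply $H^i$).

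First I would rewrite the triangle as
\[B\to E\to A\to B[1],\]
obtained by rotating the hypothesized triangle once. Now both $A$ and $B$ are concentrated in degree $0$.

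Next, I would apply the cohomological functors $H^i$ to this triangle to obtain a long exact sequence
\[\cdots\to H^{i-1}(A)\to H^i(B)\to H^i(E)\to H^i(A)\to H^{i+1}(B)\to\cdots\]
For each $i\neq 0$, the groups immediately adjacent to $H^i(E)$ vanish, so $H^i(E)=0$; in the single borderline case $i=1$ the connecting map is $A=H^0(A)\to H^1(B)=0$, so $H^1(E)$ still vanishes, and similarly for $i=-1$ on the other side. Hence $E$ has cohomology concentrated in degree $0$, so $E$ is quasi-isomorphic to $H^0(E)$ viewed as a complex in degree zero, i.e.~to an object of $\mathcal A$. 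Finally, the remaining four-term piece of the long exact sequence reads
\[0\to B\to H^0(E)\to A\to 0,\]
exhibiting $E$ as an extension of $A$ by $B$ in $\mathcal A$.

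There is no real obstacle here: the only thing to be slightly careful about is the index bookkeeping around $i=\pm 1$ to make sure no stray cohomology of $E$ survives, and confirming that the extension class recovered from the triangle agrees with the original connecting map $A\to B[1]$ (which is automatic from the way the long exact sequence is constructed).
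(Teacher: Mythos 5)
Your argument is correct, but it is not the route the paper takes. You rotate the triangle to $B\to E\to A\to B[1]$ and run the long exact cohomology sequence for the standard $t$-structure: since $A$ and $B$ are concentrated in degree $0$, the terms flanking $H^i(E)$ vanish for all $i\neq 0$, and the surviving piece is $0\to B\to H^0(E)\to A\to 0$, which is exactly the assertion. (Two small remarks: the standard $t$-structure and its long exact sequence exist for any abelian category, so the ``enough injectives'' hypothesis plays no role in your proof; and your ``borderline case'' worry at $i=\pm1$ is unnecessary, since for every $i\neq 0$ the two neighbours of $H^i(E)$ already vanish.) The paper instead argues in one line via the isomorphism $\operatorname{Ext}^1(A,B)\cong\operatorname{Hom}_{D(\mathcal A)}(A,B[1])$: the given map $A\to B[1]$ is the connecting morphism of some genuine extension $0\to B\to E'\to A\to 0$, and uniqueness of the cocone over $A\to B[1]$ identifies $E$ with $E'$. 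The two proofs buy slightly different things: yours is the more elementary and self-contained diagram chase, while the paper's makes explicit that the extension class of $E$ is the given morphism $A\to B[1]$ under the Ext--Hom identification, a refinement the paper leans on later when it speaks of extension classes being nonzero. Your closing claim that this identification is ``automatic from the way the long exact sequence is constructed'' is a bit glib --- it requires comparing the triangle of the extension with the given one --- but since the lemma as stated only asks that $E$ be \emph{some} extension of $A$ by $B$, your proof is complete as written.
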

    \begin{proof}
        The isomorphism $\operatorname{Ext}^1(A,B)\to\operatorname{Hom}_{D(\mathcal A)}(A,B[1])$ sends an exact sequence
        \[0\to B\to E\to A\to 0\]
        to the connecting homomorphism $A\to B[1]$.
    \end{proof}
    To start with, we describe each connected finite subscheme as a flag.
    \begin{proposition}
        A coherent sheaf $\mathcal G$ on $X$ is isomorphic to a structure sheaf of a length $n$ subscheme supported at a point $x$ if and only if there is a chain of surjections $\mathcal G=\mathcal G_n\to\cdots\to\mathcal G_0=0$ such that each kernel is isomorphic to $k(x)$, and the extension class of each of the exact sequence
        \[0\to k(x)\to \mathcal G_i\to\mathcal G_{i-1}\to 0\]
        is nonzero.
    \end{proposition}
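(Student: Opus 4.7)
The plan is to split the claim into its two implications and reduce both to local algebra at $x$, since the hypotheses force $\mathcal G$ to be set-theoretically supported there. Throughout I would write $A := \mathcal O_{X,x}$ and $\mathfrak n$ for its maximal ideal, and use the dictionary between $\mathcal O_X$-module quotients of $\mathcal O_X$ supported at $x$ and ideals of $A$ of finite colength.

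For the ($\Rightarrow$) direction, given $\mathcal G = \mathcal O_Z$ with $Z$ a length-$n$ subscheme at $x$, I would build the flag by a socle argument: the Artinian local $k$-algebra $R := A/I_Z$ has $\operatorname{soc}(R)\ne 0$, so picking $a\in\operatorname{soc}(R)$ nonzero yields $(a)\cong k$ and a length-$(n{-}1)$ quotient $R/(a)$ that is again a local Artinian ring. Iterating produces a chain of ideals $I_Z = J_n\subsetneq J_{n-1}\subsetneq\cdots\subsetneq J_0=A$ with $J_{i-1}/J_i\cong k$, and setting $\mathcal G_i := \mathcal O_X/J_i$ gives the desired surjections with kernel $k(x)$. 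To check that each extension $0\to k(x)\to\mathcal G_i\to\mathcal G_{i-1}\to 0$ is nonsplit, I would apply $\operatorname{Hom}_{\mathcal O_X}(-, k(x))$: any $\mathcal O_X$-linear map $\mathcal G_i\to k(x)$ is determined by the image of the cyclic generator and factors through $\mathcal G_i/\mathfrak n \mathcal G_i$, while the embedded copy $k(x) = J_{i-1}/J_i$ sits inside $\mathfrak n\mathcal G_i$ because $J_{i-1}\subseteq\mathfrak n$. Hence every such map restricts to zero on $k(x)$, so the restriction map $\operatorname{Hom}(\mathcal G_i,k(x))\to\operatorname{Hom}(k(x),k(x))$ is zero and the sequence cannot split.

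For the ($\Leftarrow$) direction, I would induct on $i$. The base case is immediate: $\mathcal G_1\cong k(x)=\mathcal O_{\{x\}}$. For the inductive step, assume $\mathcal G_{i-1}\cong\mathcal O_{Z_{i-1}}$ is a structure sheaf at $x$, and lift a cyclic generator of $\mathcal G_{i-1}$ along the surjection to some $\tilde 1\in\mathcal G_i$. Let $M := \mathcal O_X\cdot\tilde 1\subseteq\mathcal G_i$; the composition $M\hookrightarrow\mathcal G_i\twoheadrightarrow\mathcal G_{i-1}$ is surjective, and its kernel is a submodule of the simple sheaf $k(x)$. If $M\ne\mathcal G_i$, this kernel must be $0$, so $M\to\mathcal G_{i-1}$ is an isomorphism and $\mathcal G_i = k(x)\oplus M$, contradicting nonsplitness. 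Therefore $M=\mathcal G_i$, so $\mathcal G_i$ is a cyclic $\mathcal O_X$-module, i.e., the structure sheaf of a length-$i$ subscheme, automatically supported at $x$.

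The only nontrivial step is the $\operatorname{Hom}$ computation in the forward direction: the key observation is that the embedded $k(x)$ lies in the sub-$\mathcal O_X$-module $\mathfrak n\mathcal G_i$, which kills it against any map to $k(x)$. Once this is noted, both directions are essentially formal, and no further obstacles arise.
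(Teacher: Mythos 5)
Your proof is correct and follows essentially the same route as the paper: a composition-series/socle argument at the Artinian local ring for the forward direction, and lifting the cyclic generator through the surjection to show the extension is cyclic unless it splits for the converse. The only differences are cosmetic — you verify nonsplitness explicitly (via the observation that the embedded $k(x)$ lies in $\mathfrak n\mathcal G_i$), which the paper leaves implicit, and your induction works with the submodule generated by the lift rather than with the scheme-theoretic support as in the paper.
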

    \begin{proof}
        If $Z=\operatorname{Spec}A$ is a length $n$ subscheme, then we can take a composition series of $A$ over itself.
        The only nonzero simple module over $A$ is $k$, so this gives a one-dimensional ideal of $A$.
        Repeating this process yields the chain.

        Conversely, it suffices to show that if $Y$ is a subscheme of length $n-1$ supported at $x$, then every nontrivial extension in $\operatorname{Ext}_X^1(\mathcal O_Y,k(x))$ is isomorphic to the structure sheaf of a length $n$ subscheme.
        But $\operatorname{Ext}_Y^1(\mathcal O_Y,k(x))=0$, so every nontrivial extension $M$ is scheme-theoretically supported on some length $n$ subscheme $Z$.
        Denote by $x\in M$ a lift of $1\in \mathcal O_Y$ under the projection $M\to\mathcal O_Y$, then the composition
        \[\begin{tikzcd}
            \mathcal O_Z\arrow{r}{f(a)=ax} &[2em] M\rar & \mathcal O_Y
        \end{tikzcd}\]
        is just the projection $\mathcal O_Z\to\mathcal O_Y$.
        So the annihilator of $x$ is either $0$ or $\mathcal I_Y$.
        If it is the latter, then $f$ factors through $\mathcal O_Y\to M$ which splits the sequence.
        So it must be the former.
        $f$ is then an isomorphism $\mathcal O_Z\cong M$.
    \end{proof}
    \begin{corollary}\label{ext1_checkhilb}
        Let $\mathcal G$ be a coherent sheaf on $X$.
        The followings are equivalent:
        \begin{enumerate}
            \item $\mathcal G\cong\mathcal F(\mathcal O_Z)$ for some length $n$ subscheme $Z$.
            \item There exists a partition $n=n_1+\cdots+n_r$, distinct points $x_1,\ldots,x_r$, and a decomposition $\mathcal G=\mathcal G^1\oplus\cdots\oplus\mathcal G^r$ such that there is a chain of surjections $\mathcal G^j=\mathcal G^j_{n_j}\to\cdots\to\mathcal G^j_0=0$ each with kernels isomorphic to $\mathcal F(k(x_j))$, and the extension class of the exact sequence
            \[0\to \mathcal F(k(x_j))\to \mathcal G_i^j\to\mathcal G_{i-1}^j\to 0\]
            is nonzero.
        \end{enumerate}

        Moreover, we can decompose $Z=Z_1\sqcup\cdots\sqcup Z_r$ into connected components such that $Z_j$ is supported at $x_j$, has length $n_j$, and satisfies $\mathcal F(\mathcal O_{Z_j})=\mathcal G^j$.
    \end{corollary}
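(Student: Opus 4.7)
The plan is to derive this from the preceding Proposition by transporting flags through the equivalence $\mathcal F$.

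For $(1)\Rightarrow(2)$, I would first decompose $Z$ into its connected components $Z = Z_1 \sqcup \cdots \sqcup Z_r$, with $Z_j$ supported at distinct points $x_j$ and of length $n_j$. Since the $x_j$ are distinct, $\mathcal O_Z = \bigoplus_j \mathcal O_{Z_j}$; setting $\mathcal G^j := \mathcal F(\mathcal O_{Z_j})$ then gives the required decomposition $\mathcal G = \bigoplus_j \mathcal G^j$. Applying the preceding Proposition to each $\mathcal O_{Z_j}$ produces a flag with simple kernels $k(x_j)$ and nonzero extension classes; applying $\mathcal F$ termwise yields the asserted flag of $\mathcal G^j$, with nontriviality of the classes preserved because $\mathcal F$ induces isomorphisms on $\operatorname{Ext}^1$.

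For $(2)\Rightarrow(1)$, I would argue by induction on $i$ that $\mathcal F^{-1}(\mathcal G^j_i)$ is the structure sheaf $\mathcal O_{W_i}$ of a length-$i$ subscheme supported at $x_j$. The base case $i = 1$ is immediate from Theorem \ref{surfacegroupscheme}. For the inductive step, apply $\mathcal F^{-1}$ to the exact triangle realizing $\mathcal G^j_i$ as an extension of $\mathcal G^j_{i-1}$ by $\mathcal F(k(x_j))$ to obtain a triangle
\[k(x_j) \to \mathcal F^{-1}(\mathcal G^j_i) \to \mathcal O_{W_{i-1}} \to k(x_j)[1]\]
whose connecting morphism is again nonzero. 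The lemma preceding the Proposition then ensures that $\mathcal F^{-1}(\mathcal G^j_i)$ is an honest coherent sheaf, realized as a nontrivial extension; the Proposition identifies it with the structure sheaf of some length-$i$ subscheme supported at $x_j$. Setting $Z_j := W_{n_j}$ and $Z := \sqcup_j Z_j$---a genuine disjoint union, since the $x_j$ are distinct---yields a length-$n$ subscheme with $\mathcal F(\mathcal O_Z) = \mathcal G$.

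The main technical point is precisely this inductive step: \emph{a priori} $\mathcal F^{-1}(\mathcal G^j_i)$ lies only in $D^b\operatorname{Coh}(X)$, so one must know it is concentrated in a single degree before the Proposition can apply. The cited lemma handles exactly this, and it is the preserved nontriviality of the connecting class under the equivalence $\mathcal F^{-1}$ that prevents the triangle from splitting into cohomology in two different degrees.
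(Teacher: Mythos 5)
Your proof is correct and is exactly the argument the paper intends (it states the corollary without proof precisely because it follows by transporting the flag characterization of the preceding Proposition through the equivalence $\mathcal F$, using the elementary lemma on triangles to keep the objects concentrated in degree zero and the fact that an equivalence preserves nontriviality of the connecting classes). Your inductive step in $(2)\Rightarrow(1)$, including the appeal to the lemma to rule out a two-term splitting, is the right way to make this explicit.
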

    For $p\in C$, we write $np$ for the unique length $n$ subscheme of $C$ supported at $p$, and $nX_p$ for the preimage scheme of $np$.
    Equivalently, $nX_p$ is the effective Cartier divisor of multiplicity $n$ on $X_p=1X_p$.

    By Corollary \ref{ext1_checkhilb}, we should expect certain invertible sheaves on $nX_p$ to be Fourier-Mukai images of connected finite subschemes.
    One instance of this is $\mathcal O_{nX_p}$.
    \begin{lemma}\label{relative_fm_bc}
        Suppose $\pi:X\to C, \check\pi:\check{X}\to C$ are proper and flat.
        Let $\mathcal P$ be a complex on $X\times_CX$ and $\mathcal F=\Phi_{\mathcal P}$.

        (a) For any $\epsilon\in D\operatorname{Coh}(C)$, we have
        \[
            \mathcal F(-\otimes_{\mathcal O_{\check{X}}}^{\mathbf L}\check\pi^\ast\epsilon)=\mathcal F(-)\otimes_{\mathcal O_X}^{\mathbf L}\pi^\ast\epsilon.
        \]

        (b) For any closed subscheme $W\hookrightarrow C$, write $i:X_W\hookrightarrow X,\check{i}:\check{X}_W\hookrightarrow\check{X}$.
        Then $\mathcal F\circ\check{i}_\ast=i_\ast\circ\mathcal F_W$, where $\mathcal F_W$ is the Fourier-Mukai transform with kernel $\mathbf L(\tilde{i}\times i)^\ast\mathcal P$, and $i\times \check{i}:X_W\times_W\check{X}_W\to X\times_C\check{X}$.
    \end{lemma}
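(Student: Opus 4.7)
Both statements are formal consequences of the projection formula and flat base change, applied to the standard definition $\mathcal{F}(E)=Rp_{X*}(p_{\check X}^*E\otimes^{\mathbf L}\mathcal P)$, where $p_X,p_{\check X}$ are the projections from $X\times_C\check X$. For part (a), the plan is to unwind this definition, commute the two pullbacks using $\check\pi\circ p_{\check X}=\pi\circ p_X$, then apply the projection formula. Concretely,
\[
\mathcal F(E\otimes^{\mathbf L}\check\pi^\ast\epsilon)=Rp_{X\ast}\bigl(p_{\check X}^\ast E\otimes^{\mathbf L}\mathcal P\otimes^{\mathbf L}p_X^\ast\pi^\ast\epsilon\bigr)=Rp_{X\ast}\bigl(p_{\check X}^\ast E\otimes^{\mathbf L}\mathcal P\bigr)\otimes^{\mathbf L}\pi^\ast\epsilon,
\]
using that $p_{\check X}^\ast\check\pi^\ast\epsilon=p_X^\ast\pi^\ast\epsilon$ by commutativity of the Cartesian square and the projection formula for $p_X$ along the last tensor factor.

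For part (b), the plan is to use the Cartesian square
\[
\begin{tikzcd}
X_W\times_W\check X_W\arrow{r}{i\times\check i}\arrow{d}[swap]{p_{\check X_W}}&X\times_C\check X\arrow{d}{p_{\check X}}\\
\check X_W\arrow{r}{\check i}&\check X.
\end{tikzcd}
\]
Since $\pi$ is flat, the base change $p_{\check X}$ is flat, so flat base change yields $p_{\check X}^\ast\check i_\ast=(i\times\check i)_\ast p_{\check X_W}^\ast$. Substituting into $\mathcal F(\check i_\ast E)$ and applying the projection formula for $(i\times\check i)_\ast$ pulls the kernel through:
\[
(i\times\check i)_\ast p_{\check X_W}^\ast E\otimes^{\mathbf L}\mathcal P=(i\times\check i)_\ast\bigl(p_{\check X_W}^\ast E\otimes^{\mathbf L}\mathbf L(i\times\check i)^\ast\mathcal P\bigr).
\]
Since $p_X\circ(i\times\check i)=i\circ p_{X_W}$, composing the pushforward identifies the remaining term with $i_\ast\mathcal F_W(E)$.

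The only genuine content is the flat base change step, for which we need Tor-independence of $\check i$ and $p_{\check X}$ over $\check X$. This is the main thing to verify, but it is immediate: the hypothesis that $\pi:X\to C$ is flat makes $p_{\check X}:X\times_C\check X\to\check X$ flat, so $\mathcal O_{X\times_C\check X}$ is $p_{\check X}$-flat and the higher Tors against $\mathcal O_{\check X_W}$ vanish. Everything else is bookkeeping with projection formula and composition of pushforwards, and poses no real obstacle.
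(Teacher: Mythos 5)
Your proposal is correct and follows essentially the same route as the paper's proof: part (a) by monoidality of pullback, the identification $\check p^\ast\check\pi^\ast\epsilon=p^\ast\pi^\ast\epsilon$, and the projection formula; part (b) by flat base change along the (flat, since $\pi$ is flat) projection to $\check X$, the projection formula for the closed immersion, and composing pushforwards. Your extra remark that the flatness of the projection is what makes the derived base change go through matches the implicit content of the paper's Cartesian-square argument.
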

    \begin{proof}
        Fix the notations of projections and closed embeddings as follows:
        \[
        \begin{tikzcd}
            \check{X}_W\arrow{d}{\check{i}}& \check{X}_W\times_WX_W\arrow{r}{p_W}\arrow[swap]{l}{\check{p}_W}\arrow{d}{\check{i}\times i}&X_W\arrow{d}{i}\\
            \check{X}& \check{X}\times_CX\arrow{r}{p}\arrow[swap]{l}{\check{p}}&X
        \end{tikzcd}
        \]
        Note that these squares are Cartesian.
        We then have
        \begin{align*}
            \mathcal F(A\otimes^{\mathbf L}\check\pi^\ast\epsilon)&=\mathbf Rp_\ast(\mathcal P\otimes^{\mathbf L}\check{p}^\ast(A\otimes^{\mathbf L}\check\pi^\ast\epsilon))=\mathbf Rp_\ast(\mathcal P\otimes^{\mathbf L}\check{p}^\ast A\otimes^{\mathbf L}\check{p}^\ast\check\pi^\ast\epsilon)\\
            &=\mathbf Rp_\ast(\mathcal P\otimes^{\mathbf L}\check{p}^\ast A\otimes^{\mathbf L}p^\ast\pi^\ast\epsilon)=\mathbf Rp_\ast(\mathcal P\otimes^{\mathbf L}\check{p}^\ast A)\otimes^{\mathbf L}\pi^\ast\epsilon\\
            &=\mathcal F(A)\otimes^{\mathbf L}\pi^\ast\epsilon.
        \end{align*}
        \begin{align*}
            \mathcal F(\check{i}_\ast A)&=\mathbf Rp_\ast(\mathcal P\otimes^{\mathbf L}\check{p}^\ast\check{i}_\ast A)=\mathbf Rp_\ast(\mathcal P\otimes^{\mathbf L}(\check{i}\times i)_\ast\check{p}_W^\ast A))\\
            &=\mathbf Rp_\ast(\check{i}\times i)_\ast(\mathbf L(\check{i}\times i)^\ast\mathcal P\otimes^{\mathbf L}\check{p}_W^\ast A))=i_\ast\mathbf R(p_W)_\ast(\mathbf L(\check{i}\times i)^\ast\mathcal P\otimes^{\mathbf L}\check{p}_W^\ast A))\qedhere
        \end{align*}
    \end{proof}
    \begin{proposition}\label{section_of_fibration}
        We have $\mathcal F(\mathcal O_{s(np)})=\mathcal O_{nX_p}$.
    \end{proposition}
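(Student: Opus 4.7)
The idea is to factor $\mathcal O_{s(np)}$ as a Tor-independent tensor product of a ``horizontal'' and a ``vertical'' sheaf and then apply the projection formula in Lemma \ref{relative_fm_bc}(a). Concretely, $s(C)$ is a smooth Cartier divisor on the smooth surface $X$, while $nX_p=\pi^{-1}(np)$ is the Cartier divisor pulled back from $C$; in local coordinates $(t,u)$ near $s(p)$ with $\pi(t,u)=t$ and $s(C)=V(u)$, they are cut out by the regular sequence $(u,t^n)$. Hence $s_\ast\mathcal O_C$ and $\pi^\ast\mathcal O_{np}$ are Tor-independent on $X$, and
\[
\mathcal O_{s(np)} = s_\ast\mathcal O_C \otimes^{\mathbf L}_{\mathcal O_X} \pi^\ast\mathcal O_{np}.
\]

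Applying Lemma \ref{relative_fm_bc}(a) with $\epsilon=\mathcal O_{np}$ then yields
\[
\mathcal F(\mathcal O_{s(np)}) = \mathcal F(s_\ast\mathcal O_C) \otimes^{\mathbf L}_{\mathcal O_X} \pi^\ast\mathcal O_{np},
\]
and the argument concludes as soon as one has the relative identity $\mathcal F(s_\ast\mathcal O_C)=\mathcal O_X$: flatness of $\pi$ then gives $\mathcal F(\mathcal O_{s(np)})=\pi^\ast\mathcal O_{np}=\mathcal O_{nX_p}$, which is what we want.

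The only nontrivial input is thus this relative sharpening of the fiberwise identity $\mathcal F(k(s(p)))=\mathcal O_{X_p}$ already recorded in Theorem \ref{surfacegroupscheme}. It is contained in Bridgeland's construction of $\mathcal P$ in \cite{bridgeland1998}, where the kernel is normalized so that its restriction along the section is the structure sheaf $\mathcal O_X$. I expect this to be the main potential obstacle if one insists on a fully self-contained proof. In that case one can instead argue by induction on $n$: the short exact sequence $0\to k(s(p))\to\mathcal O_{s(np)}\to\mathcal O_{s((n-1)p)}\to 0$ combined with the inductive hypothesis and the lemma in the excerpt exhibits $\mathcal F(\mathcal O_{s(np)})$ as a sheaf extension of $\mathcal O_{(n-1)X_p}$ by $\mathcal O_{X_p}$, and identifying its extension class with the canonical one defining $\mathcal O_{nX_p}$ becomes the delicate point; the cleanest way to do that identification is, again, to compare with the Tor-independent factorization above.
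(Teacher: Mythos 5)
Your proposal follows essentially the same route as the paper: factor $\mathcal O_{s(np)}=s_\ast\mathcal O\otimes^{\mathbf L}\pi^\ast\mathcal O_{np}$ and apply Lemma \ref{relative_fm_bc}(a). (The paper obtains the factorization slightly more quickly from the projection formula, $s_\ast\mathcal O\otimes^{\mathbf L}\pi^\ast\mathcal O_{np}=s_\ast\mathbf Ls^\ast\pi^\ast\mathcal O_{np}=s_\ast\mathcal O_{np}=\mathcal O_{s(np)}$, using $\pi\circ s=\operatorname{id}$; your Tor-independence computation with the regular sequence $(u,t^n)$ is an equally valid way to see the same thing.) The only divergence is the final step, and the obstacle you flag there -- needing the relative identity $\mathcal F(s_\ast\mathcal O)=\mathcal O_X$, which is a normalization of the kernel and is not contained in Theorem \ref{surfacegroupscheme} as stated -- is sidestepped in the paper by a simple observation: it suffices to know that $\mathcal F(s_\ast\mathcal O)$ is an invertible sheaf pulled back from $C$, say $\pi^\ast\Theta$. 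Since $np$ is an Artinian local scheme, every line bundle on it is trivial, so $\Theta\otimes\mathcal O_{np}\cong\mathcal O_{np}$ and hence
\[
\mathcal F(s_\ast\mathcal O)\otimes^{\mathbf L}\pi^\ast\mathcal O_{np}=\pi^\ast(\Theta\otimes\mathcal O_{np})=\pi^\ast\mathcal O_{np}=\mathcal O_{nX_p},
\]
independently of any normalization of $\mathcal P$. With this remark your fallback induction (and the delicate extension-class identification it would require) is unnecessary: your argument as written is complete once you either adopt the normalization $\Phi_{\mathcal P}(s_\ast\mathcal O)=\mathcal O$ (as the paper does only later, in Section \ref{ss:tots}) or insert the tensoring-by-$\pi^\ast\Theta$ observation above.
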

    \begin{proof}
        Observe that
        \[s_\ast\mathcal O\otimes^{\mathbf L}\pi^\ast\mathcal O_{np}=s_\ast(\mathbf Ls^\ast\pi^\ast\mathcal O_{np})=s_\ast\mathcal O_{np}=\mathcal O_{s(np)}.\]
        We also know that $\mathcal F(s_\ast\mathcal O)$ is an invertible sheaf pulled back from $C$, say $\mathcal F(s_\ast\mathcal O)=\pi^\ast \Theta$.
        So
        \[\mathcal F(s_\ast\mathcal O)\otimes^{\mathbf L}\pi^\ast\mathcal O_{np}=\pi^\ast(\Theta\otimes\mathcal O_{np})=\pi^\ast\mathcal O_{np}=\mathcal O_{nX_p}.\]
        The claim then follows from the preceding lemma.
    \end{proof}
    \begin{proposition}\label{multiply}
        Suppose $L$ is an invertible sheaf on $nX_p$ such that $L_{\rm red}:=L|_{X_p}$ has degree $0$.
        Let $Z$ be any connected finite subscheme of length $n$ with $\pi(Z_{\rm red})=p$.
        Write $\mathcal F(\mathcal O_Z)=(i_{nX_p})_\ast N$.

        Then $(i_{nX_p})_\ast L\otimes^{\rm und} \mathcal F(\mathcal O_Z)=(i_{nX_p})_\ast(L\otimes N)$ is isomorphic to $\mathcal F(\mathcal O_{Z'})$ for some connected finite subscheme $Z'$ of length $n$.
        And $\mathcal F(k(Z_{\rm red}'))=(i_{X_p})_\ast(L_{\rm red}\otimes N_{\rm red})$ where $N_{\rm red}:=N|_{X_p}$.
    \end{proposition}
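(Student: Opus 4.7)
The plan is to apply Corollary \ref{ext1_checkhilb} in both directions, using the projection formula to track the effect of tensoring by $L$ and Theorem \ref{surfacegroupscheme} to identify the resulting kernels as Fourier--Mukai images of points. Writing $x = Z_{\rm red}$, Corollary \ref{ext1_checkhilb} will furnish a chain of surjections $\mathcal F(\mathcal O_Z) = \mathcal G_n \twoheadrightarrow \cdots \twoheadrightarrow \mathcal G_0 = 0$ with each kernel isomorphic to $\mathcal F(k(x))$ and each extension class nonzero. I will then tensor this chain with $(i_{nX_p})_\ast L$ over $\mathcal O_X$.

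Since $L$ is an invertible $\mathcal O_{nX_p}$-module and every $\mathcal G_i$ is killed by $\mathcal I_{nX_p/X}$, this tensor product is exact and preserves non-splitting (being the image under an autoequivalence of $\operatorname{Coh}(nX_p)$). By the projection formula the tensored chain lives on $(i_{nX_p})_\ast(L\otimes N)$, with each kernel becoming
\[(i_{nX_p})_\ast L\otimes\mathcal F(k(x))=(i_{X_p})_\ast\bigl(L_{\rm red}\otimes\mathcal F(k(x))_{\rm red}\bigr),\]
where $\mathcal F(k(x))_{\rm red}$ denotes the unique sheaf on $X_p$ pushing forward to $\mathcal F(k(x))$. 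Theorem \ref{surfacegroupscheme} then implies that, since $L_{\rm red}$ has degree zero, it is an element of $X^\circ_p$ whose action on $X_p$ sends $x$ to a unique point $y$ satisfying $L_{\rm red}\otimes\mathcal F(k(x))_{\rm red}=\mathcal F(k(y))_{\rm red}$. Applying Corollary \ref{ext1_checkhilb} in reverse, with trivial partition and support point $y$, then produces the connected length-$n$ subscheme $Z'$ supported at $y$ with $\mathcal F(\mathcal O_{Z'})=(i_{nX_p})_\ast(L\otimes N)$, establishing the first assertion and giving $k(Z'_{\rm red})=k(y)$.

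The remaining identity $\mathcal F(k(Z'_{\rm red}))=(i_{X_p})_\ast(L_{\rm red}\otimes N_{\rm red})$ then boils down to proving $N|_{X_p}\cong\mathcal F(k(x))_{\rm red}$, and I expect this to be the main obstacle. The approach I would take is to analyse the filtration $0=N_0\subset\cdots\subset N_n=N$ on $nX_p$ coming from the flag above: each successive quotient is the pushforward of the simple sheaf $\mathcal F(k(x))_{\rm red}$, simple because it is a rank-$1$ torsion-free sheaf on the integral curve $X_p$ and so has endomorphism ring $k$. Multiplication by a local generator of the ideal $\mathcal I_{X_p/nX_p}$ therefore induces scalars between adjacent associated graded pieces, and the scheme-theoretic nontriviality of the successive extensions should force each such scalar to be nonzero. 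This will give $\mathcal I N=N_{n-1}$, so that $N|_{X_p}=N/\mathcal I N=N/N_{n-1}=\mathcal F(k(x))_{\rm red}$ as desired.
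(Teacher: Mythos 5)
Your treatment of the first assertion is essentially the paper's own proof, spelled out in more detail: tensor the flag supplied by Corollary \ref{ext1_checkhilb} with $(i_{nX_p})_\ast L$, observe that $-\otimes L$ is an autoequivalence of $\operatorname{Coh}(nX_p)$ so exactness and non-splitness of the extensions survive, use the projection formula to identify the new kernels, and use Theorem \ref{surfacegroupscheme} to recognize $L_{\rm red}\otimes\mathcal F(k(x))_{\rm red}$ as $\mathcal F(k(y))_{\rm red}$ for a unique $y$; feeding this back into Corollary \ref{ext1_checkhilb} gives $Z'$ and the identity $\mathcal F(k(Z'_{\rm red}))=(i_{X_p})_\ast\bigl(L_{\rm red}\otimes\mathcal F(k(Z_{\rm red}))_{\rm red}\bigr)$. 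Up to this point your argument is correct and coincides with the paper's.

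The final step, however, contains a genuine gap. You want $N|_{X_p}\cong\mathcal F(k(x))_{\rm red}$, and you argue that non-splitness of the successive extensions forces multiplication by a generator $t$ of $\mathcal I_{X_p/nX_p}$ to act by nonzero scalars on the graded pieces, so that $\mathcal I N=N_{n-1}$. That implication fails: an extension can be non-split while being realized entirely inside $\operatorname{Coh}(X_p)$, in which case $t$ acts by zero on it. Concretely, take $n=2$, $X_p$ smooth, and $Z$ the length-$2$ subscheme of the curve $X_p$ at $x$ (so $\pi(Z_{\rm red})=p$ but the scheme-theoretic image of $\pi|_Z$ is the reduced point $p$). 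By Lemma \ref{relative_fm_bc}, $\mathcal F(\mathcal O_Z)=(i_{X_p})_\ast M$ with $M$ the non-split self-extension of $\mathcal F(k(x))_{\rm red}$ on $X_p$ (non-split because the fiberwise transform is an equivalence, and $\operatorname{Ext}^1_{X_p}(\mathcal F(k(x))_{\rm red},\mathcal F(k(x))_{\rm red})\cong H^1(\mathcal O_{X_p})\neq 0$ makes this possible on $X_p$ itself). Here $\mathcal I N=0$, so $N_{\rm red}=M$ has rank $2$ and is not $\mathcal F(k(x))_{\rm red}$, even though every extension class in the flag is nonzero. So the identification you flagged as the main obstacle cannot be proved in the stated generality; it genuinely needs the transversality hypothesis of Proposition \ref{geom_desc} (scheme-theoretic image of $\pi|_Z$ equal to $np$), or else the second identity should be read with $\mathcal F(k(Z_{\rm red}))$ in place of $(i_{X_p})_\ast N_{\rm red}$ — which is exactly what your first argument already establishes. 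For comparison, the paper's proof elides the same point: it asserts without comment that the kernels of the flag are $(i_{X_p})_\ast N_{\rm red}$, and this is only legitimate in the situations where the proposition is later applied, namely when $\mathcal F(\mathcal O_Z)$ is invertible on $nX_p$ so that Proposition \ref{geom_desc} identifies $N_{\rm red}$ with $\mathcal F(k(Z_{\rm red}))_{\rm red}$.
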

    Here, and later in this article, the notation $\otimes^{\rm und}$ emphasizes that we are using the non-derived tensor product.
    \begin{proof}
        By Corollary \ref{ext1_checkhilb}, there is a chain of surjections $(i_{nX_p})_\ast N=\mathcal G_n\to\cdots\to\mathcal G_0=0$ with kernels all isomorphic to $(i_{X_p})_\ast N_{\rm red}$.
        Each $\mathcal G_i$ is then supported on $nX_p$, so we can write $\mathcal G_i=(i_{nX_p})_\ast\mathcal H_i$ for some coherent sheaves $\mathcal H_i$ on $nX_p$.

        Then take $\mathcal G_i^L=(i_{nX_p})_\ast(\mathcal H_i\otimes L)$.
        We then have a chain of surjections $(i_{nX_p})_\ast (L\otimes N)=\mathcal G_n^L\to\cdots\to\mathcal G_0^L=0$ with kernels
        \[(i_{nX_p})_\ast ((i_{X_p})_\ast N_{\rm red}\otimes L)=(i_{nX_p})_\ast (i_{X_p})_\ast (N_{\rm red}\otimes L_{\rm red})=(i_{X_p})_\ast (N_{\rm red}\otimes L_{\rm red}).\qedhere\]
    \end{proof}
    Combined with Proposition \ref{section_of_fibration}, we obtain:
    \begin{corollary}
        Suppose $L$ is an invertible sheaf on $nX_p$ such that $L_{\rm red}=L|_{X_p}$ has degree $0$.
        Then $(i_{nX_p})_\ast L$ is isomorphic to $\mathcal F(\mathcal O_Z)$ for some connected finite subscheme $Z$ of length $n$, and $\mathcal F(k(Z_{\rm red}))=(i_{X_p})_\ast L_{\rm red}$.
    \end{corollary}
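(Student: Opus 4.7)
The plan is to reduce the claim directly to Propositions \ref{section_of_fibration} and \ref{multiply}. The hypothesis of Proposition \ref{multiply} requires a known connected length $n$ subscheme $Z$ with $\pi(Z_{\rm red}) = p$ whose Fourier--Mukai image has the form $(i_{nX_p})_\ast N$, and Proposition \ref{section_of_fibration} hands us exactly such a $Z$: take $Z_0 := s(np)$, a connected length $n$ subscheme supported at $s(p)$ (because $s$ is a closed immersion and $np$ is connected of length $n$), whose Fourier--Mukai image is $\mathcal O_{nX_p} = (i_{nX_p})_\ast \mathcal O_{nX_p}$.

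Setting $N := \mathcal O_{nX_p}$ and feeding in the given $L$, Proposition \ref{multiply} produces a connected finite subscheme $Z$ of length $n$ with
\[\mathcal F(\mathcal O_Z) \cong (i_{nX_p})_\ast(L \otimes N) = (i_{nX_p})_\ast L,\]
which is the first assertion. For the second, the same proposition identifies $\mathcal F(k(Z_{\rm red})) = (i_{X_p})_\ast(L_{\rm red} \otimes N_{\rm red})$; since $N_{\rm red} = \mathcal O_{nX_p}|_{X_p} = \mathcal O_{X_p}$, the right-hand side collapses to $(i_{X_p})_\ast L_{\rm red}$, as required.

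There is essentially no obstacle here — the corollary is a direct packaging of the two preceding results using the canonical choice $Z_0 = s(np)$ as the base point supplied by the section — so the only thing I would double-check is that the non-derived tensor product $(i_{nX_p})_\ast L \otimes^{\rm und} (i_{nX_p})_\ast \mathcal O_{nX_p}$ on $X$ really equals $(i_{nX_p})_\ast L$, which is immediate from the projection formula for the closed immersion $i_{nX_p}$.
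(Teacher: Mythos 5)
Your proposal is correct and is exactly the paper's argument: the corollary is stated immediately after the phrase ``Combined with Proposition \ref{section_of_fibration}, we obtain,'' i.e.\ one applies Proposition \ref{multiply} with $Z=s(np)$ and $N=\mathcal O_{nX_p}$ supplied by Proposition \ref{section_of_fibration}, just as you do. Your final sanity check is also subsumed in the statement of Proposition \ref{multiply}, which already asserts $(i_{nX_p})_\ast L\otimes^{\rm und}\mathcal F(\mathcal O_Z)=(i_{nX_p})_\ast(L\otimes N)$.
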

    The converse is not true:
    Not all connected finite subscheme $Z$ of length $n$ has Fourier-Mukai image equal to an invertible sheaf on $nX_{\pi(Z_{\rm red})}$, even when $Z\hookrightarrow X^\circ$.
    But it turns out that we do have a nice description of those who do.
    \begin{proposition}\label{geom_desc}
        Let $Z$ be a connected finite subscheme of length $n$ supported on $x\in X$.
        Write $p=\pi(x)$.
        Let $W\hookrightarrow np\hookrightarrow C$ be the scheme-theoretic image of $\pi|_Z$.
        The followings are equivalent:
        \begin{enumerate}
            \item The subscheme $Z$ belongs to $X^\circ$ and $W=np$.
            \item The transform $\mathcal F(\mathcal O_Z)$ is an invertible sheaf on $nX_p$.
        \end{enumerate}
        Furthermore, if this happens, then $(i_{X_p})_\ast(\mathcal F(\mathcal O_Z)|_{X_p})=\mathcal F(k(x))$.
    \end{proposition}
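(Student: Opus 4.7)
I will prove the two implications separately.

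\textbf{Direction (2) $\Rightarrow$ (1).} Since $\mathcal O_Z$ is scheme-theoretically supported on $X_W$ by definition of the scheme-theoretic image $W$, Lemma \ref{relative_fm_bc}(b) produces $\mathcal F(\mathcal O_Z)$ scheme-theoretically supported on $X_W$ as well. If $\mathcal F(\mathcal O_Z)=(i_{nX_p})_\ast L$ with $L$ invertible on $nX_p$, then its scheme-theoretic support is exactly $nX_p$, so $X_W\supseteq nX_p$ and hence $W=np$. For the inclusion $Z\subset X^\circ$, the filtration of $L$ by the ideals of $mX_p\subset nX_p$ yields, after pushforward, a chain of surjections of $\mathcal F(\mathcal O_Z)$ whose kernel at every step is $(i_{X_p})_\ast(L|_{X_p})$. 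Matching this chain against the one produced by Corollary \ref{ext1_checkhilb} -- both have length $n$, and comparison of Euler characteristics against the composition factors $\mathcal F(k(x))$ (each of degree $0$ by Theorem \ref{surfacegroupscheme}) forces $\deg L|_{X_p}=0$ -- identifies $(i_{X_p})_\ast(L|_{X_p})\cong\mathcal F(k(x))$. The latter is then an invertible sheaf on $X_p$, so Theorem \ref{surfacegroupscheme} gives $x\in X^\circ$ and hence $Z\subset X^\circ$. The ``furthermore'' clause falls out of the same identification.

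\textbf{Direction (1) $\Rightarrow$ (2).} The length equality $\mathrm{length}(Z)=n=\mathrm{length}(np)$, combined with the injection $\mathcal O_W\hookrightarrow\mathcal O_Z$ from the scheme-theoretic image, forces $\pi|_Z$ to be an isomorphism $Z\xrightarrow{\sim}np$. Hence $Z=\sigma(np)$ is the graph of the section $\sigma=(\pi|_Z)^{-1}\colon np\to X^\circ\subset X$. Using smoothness of $X^\circ\to C$ at $x$, I extend $\sigma$ to a section $\tilde\sigma\colon U\to X^\circ$ on a Zariski open $U\ni p$ in $C$. The group scheme structure on $X^\circ/C$ (Theorem \ref{surfacegroupscheme}) then promotes $\tilde\sigma$ to a translation automorphism $\tau_{\tilde\sigma}\colon X|_U\to X|_U$ over $U$, mapping the unit section to $\tilde\sigma$, so that $(\tau_{\tilde\sigma})_\ast\mathcal O_{s(np)}=\mathcal O_Z$. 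Invoking the intertwining of $\mathcal F$ with translation -- that is, $\mathcal F\circ(\tau_{\tilde\sigma})_\ast\cong(-\otimes\mathcal L_{\tilde\sigma})\circ\mathcal F$ for some line bundle $\mathcal L_{\tilde\sigma}$ on $X|_U$ (the theorem of the square for the autodual Jacobian $X\to C$, available from \cite{bridgeland1998}) -- together with Proposition \ref{section_of_fibration} and the projection formula, I obtain
\[\mathcal F(\mathcal O_Z)=\mathcal L_{\tilde\sigma}\otimes(i_{nX_p})_\ast\mathcal O_{nX_p}=(i_{nX_p})_\ast(\mathcal L_{\tilde\sigma}|_{nX_p}),\]
an invertible sheaf on $nX_p$. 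Running the same argument with $k(s(p))$ in place of $\mathcal O_{s(np)}$ yields the ``furthermore'' clause.

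\textbf{Main obstacle.} The hardest input is the translation-intertwining relation for $\mathcal F$, i.e., the theorem of the square for the relative compactified Jacobian $X\to C$. Should a direct appeal not be desirable at this point, an alternative strategy is induction on $n$: the corollary following Proposition \ref{multiply} defines an injective algebraic map from the torsor of invertible lifts of $\mathcal F(\mathcal O_{\sigma((n-1)p)})$ to $nX_p$ (a $k$-torsor under $H^1(X_p,\mathcal O_{X_p})$) into the $1$-dimensional affine space of length-$n$ extensions of $\sigma((n-1)p)$ satisfying (1); an injective algebraic map between $1$-dimensional affine spaces over $\mathbb C$ is surjective, placing our $Z$ in the image and yielding the invertibility.
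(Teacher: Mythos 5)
Your direction (2) $\Rightarrow$ (1) is essentially sound in outline (the support argument for $W=np$ matches the paper's), though the step ``matching this chain against the one produced by Corollary \ref{ext1_checkhilb} \dots identifies $(i_{X_p})_\ast(L|_{X_p})\cong\mathcal F(k(x))$'' is not justified as written: two filtrations with isotypic factors do not automatically have isomorphic factors; you would need a Jordan--H\"older/stability argument for rank-one torsion-free sheaves of equal degree on the integral curve $X_p$ (or, more directly, note that any quotient of the pushforward of a line bundle on $nX_p$ is of the form $(i_T)_\ast (L|_T)$ and apply this to the surjection onto $\mathcal F(k(x))$). This is fixable. The paper avoids it entirely by observing that $\mathcal F(\mathcal O_Z)$ surjects onto $\mathcal F(k(x))$, whose fiber at the singular point $x$ has dimension at least $2$.

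The real problem is direction (1) $\Rightarrow$ (2). Your key step --- extending the truncated section $\sigma\colon np\to X^\circ$ to a section $\tilde\sigma\colon U\to X^\circ$ over a \emph{Zariski} open $U\ni p$ --- is false in general, already for $n=1$: a section over a Zariski open extends to a global section of $X\to C$ by the valuative criterion, and the union of images of all sections is a countable union of curves (the Mordell--Weil group is countable), so a very general point of $X^\circ_p$ lies on no such section. Smoothness only gives the infinitesimal/formal lift (over $\operatorname{Spec}\hat{\mathcal O}_{C,p}$, or an analytic-local section), and then you would owe a base-change argument for $\mathcal F$. Moreover, the translation-intertwining identity $\mathcal F\circ(\tau_{\tilde\sigma})_\ast\cong(-\otimes\mathcal L_{\tilde\sigma})\circ\mathcal F$ is not available by citation to \cite{bridgeland1998}; it is a theorem-of-the-square-type statement of comparable depth to what is being developed, and your fallback induction sketch leaves precisely the hard points open (algebraicity of $L\mapsto Z_L$ in families is the subject of Section \ref{ss:algebraicity} and is not yet available here, and the claimed $\mathbb A^1$-structures and algebraicity of the comparison map are unverified). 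The paper's argument needs none of this: once $\pi|_Z\colon Z\to W$ is an isomorphism, the projection $r\colon Z\times_WX_W\to X_W$ is an isomorphism, and $\mathcal F(\mathcal O_Z)=(i_{X_W})_\ast r_\ast\, i_{Z\times_WX_W}^\ast\mathcal P$ is the pushforward of the restriction of $\mathcal P$ to $Z\times_WX_W\subset X^\circ\times_CX$, where $\mathcal P$ is a line bundle; this gives invertibility and the ``furthermore'' clause in one stroke. You should either adopt that direct computation or repair your route by working formally/analytically at $p$ and supplying an actual proof of the intertwining relation.
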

    \begin{proof}
        If $x\in X\setminus X^\circ$, then $x$ is the singularity on $X_p$.
        By Corollary \ref{ext1_checkhilb}, $\mathcal F(\mathcal O_Z)$ surjects onto $\mathcal F(k(x))$.
        So $i_x^\ast \mathcal F(\mathcal O_Z)$ surjects onto $i_x^\ast\mathcal F(k(x))$ which has dimension at least $2$, so $\mathcal F(\mathcal O_Z)$ is not an invertible sheaf on its support.

        If $W=mp$ with $m<n$, then $\mathcal O_Z$ is supported on $mX_p$, so $\mathcal F(\mathcal O_Z)$ is supported on $mX_p$ by Lemma \ref{relative_fm_bc}.
        So it cannot be an invertible sheaf on $nX_p$.

        Conversely, suppose $x\in X^\circ$ and $W=np$.
        In this case $\pi|_Z:Z\to W$ has to be an isomorphism.
        Indeed, if the associated map $\mathcal O_W\to O_Z$ is not an isomorphism, then it cannot be injective as both are finite-dimensional over $k$, which means that $\pi|_Z$ factors through a smaller subscheme of $W$, contradiction.

        Hence the second projection $r:Z\times_WX_W\to X_W$ is an isomorphism.
        Then,
        \[
        \mathbf R(\operatorname{pr}_2)_\ast(\mathcal P\otimes^{\mathbf L}\operatorname{pr}_1^\ast\mathcal O_Z)=\mathbf R(\operatorname{pr}_2\circ i_{Z\times_WX_W})_\ast\mathbf Li_{Z\times_WX_W}^\ast\mathcal P=(i_{X_W})_\ast r_\ast\mathbf Li_{Z\times_WX_W}^\ast\mathcal P.
        \]

        Now, since $x\in X^\circ$, $Z$ is a closed subscheme of $X^\circ$, so $Z\times_WX_W$ is a closed subscheme of $X^\circ\times_CX$.
        On the other hand, $\mathcal P|_{X^\circ\times_C X}$ is an invertible sheaf.
        So $\mathbf Li_{Z\times_WX_W}^\ast\mathcal P=i_{Z\times_WX_W}^\ast\mathcal P$ is an invertible sheaf.

        We moreover have
        \[\mathcal F(\mathcal O_Z)|_{X_p}= i_{X_p}^\ast(i_{X_W})_\ast r_\ast i_{Z\times_WX_W}^\ast\mathcal P= i_{X_p}^\ast (r^{-1})^\ast i_{Z\times_WX_W}^\ast\mathcal P=(r_x^{-1})^\ast i_{\{x\}\times X_p}^\ast\mathcal P
        \]
        where $r_x:\{x\}\times X_p\to X_p$.
        Therefore $(i_{X_p})_\ast(\mathcal F(\mathcal O_Z)|_{X_p})=\mathcal F(k(x))$.
    \end{proof}
    \begin{definition}
        The group scheme part of $X^{[n]}$ relative to $\pi$, denoted $X^{[n],\pi}\subset X^{[n]}$ is defined to be the open subvariety consisting of length $n$ subschemes $Z\hookrightarrow X^\circ$ such that the scheme-theoretic image of $\pi|_Z$ has length $n$.
    \end{definition}
    Suppose we decompose $Z=Z_1\sqcup\cdots\sqcup Z_r$ into connected components.
    Then $Z\in X^{[n],\pi}$ iff $\pi((Z_i)_{\rm red})$ are pairwise distinct, and the equivalence conditions in the preceding proposition applies to each $Z_i$.

    Write $\pi^{[n]}:X^{[n]}\to C^{[n]}$ for the composition of $X^{(n)}\to C^{(n)}=C^{[n]}$ with the Hilbert-Chow morphism $X^{[n]}\to X^{(n)}$.
    \begin{corollary}
        For $W=n_1p_1+\cdots+n_rp_r\in C^{[n]}$, we write $E_W$ for the effective Cartier divisor $n_1X_{p_1}+\cdots +n_rX_{p_r}$.
        Then $Z\in X^{[n],\pi}_W$ if and only if $\mathcal F(\mathcal O_Z)$ is (the pushforward of) an invertible sheaf on $E_W$ with degree $0$ on the reduction of each component.
    \end{corollary}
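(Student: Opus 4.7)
The plan is to assemble this corollary from the connected-component theory already in place, using Proposition \ref{geom_desc} to handle one connected component of $Z$ at a time and Corollary \ref{ext1_checkhilb} to glue these pieces. The distinctness of the $p_i$ is what will make $E_W=\bigsqcup_i n_iX_{p_i}$ a disjoint union, so that ``an invertible sheaf on $E_W$'' is exactly the data of a tuple of invertible sheaves on the $n_iX_{p_i}$.

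First I would unpack $Z\in X^{[n],\pi}_W$ at the level of connected components $Z=Z_1\sqcup\cdots\sqcup Z_r$, with supports $x_i$ and lengths $n_i'$. The requirement that $\pi|_Z$ have scheme-theoretic image of length $n$ forces each $\pi|_{Z_i}$ to be an isomorphism onto its scheme-theoretic image (by a length comparison for maps between Artin local schemes, as already used in the proof of Proposition \ref{geom_desc}), and the image points $\pi(x_i)$ are therefore pairwise distinct. Matching this with $\pi^{[n]}(Z)=\sum n_jp_j$ identifies, after reindexing, each $Z_i$ with a connected length-$n_i$ subscheme of $X^\circ$ supported above $p_i$ whose scheme-theoretic image is exactly $n_ip_i$; conversely, any such configuration of $Z_i$ yields an element of $X^{[n],\pi}_W$.

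For the forward direction, Proposition \ref{geom_desc} applied to each $Z_i$ yields an invertible sheaf $L_i$ on $n_iX_{p_i}$ with $\mathcal F(\mathcal O_{Z_i})=(i_{n_iX_{p_i}})_\ast L_i$ and $(i_{X_{p_i}})_\ast(L_i|_{X_{p_i}})=\mathcal F(k(x_i))$. Since $x_i\in X^\circ$, Theorem \ref{surfacegroupscheme} identifies $\mathcal F(k(x_i))$ with a degree zero invertible sheaf on $X_{p_i}$, whence $L_i|_{X_{p_i}}$ has degree zero. Because the $p_i$ are distinct the $L_i$ glue to an invertible sheaf $L$ on $E_W$ with the required degree condition, and Corollary \ref{ext1_checkhilb} identifies $\mathcal F(\mathcal O_Z)=\bigoplus_i\mathcal F(\mathcal O_{Z_i})$ with the pushforward $(i_{E_W})_\ast L$.

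For the converse, an invertible sheaf $L$ on $E_W$ with degree zero reductions splits, by disjointness of the components of $E_W$, as a direct sum of invertible sheaves $L_i$ of the same type on the $n_iX_{p_i}$. The corollary immediately preceding Proposition \ref{geom_desc} realizes each $(i_{n_iX_{p_i}})_\ast L_i$ as $\mathcal F(\mathcal O_{Z_i'})$ for some connected length-$n_i$ subscheme $Z_i'$, and the nontrivial direction of Proposition \ref{geom_desc} then places $Z_i'$ in $X^\circ$ with scheme-theoretic image $n_ip_i$; invoking Corollary \ref{ext1_checkhilb} in reverse matches the direct sum decomposition of $\mathcal F(\mathcal O_Z)$ with a decomposition $Z=\bigsqcup Z_i'$, giving $Z\in X^{[n],\pi}_W$. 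The only delicate point is the bookkeeping identifying connected components of $Z$ with direct summands of $\mathcal F(\mathcal O_Z)$ and reading the degree zero condition through the restriction to the reduced fibers; once this correspondence is pinned down, the argument is a direct assembly of the earlier results.
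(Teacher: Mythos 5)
Your proof is correct and follows the same route the paper intends: the corollary is stated there without proof precisely because it is the assembly you describe — decompose $Z$ into connected components (the remark following the definition of $X^{[n],\pi}$), apply Proposition \ref{geom_desc} and the degree-zero corollary after Proposition \ref{multiply} componentwise, and use Corollary \ref{ext1_checkhilb} together with Theorem \ref{surfacegroupscheme} for the bookkeeping and the degree condition. No gaps.
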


    We can then define the action map on $k$-points
    \[\mu_k:(X^{[n],\pi}\times_{C^{[n]}}X^{[n]})(k)\to X^{[n]}(k),(\mathcal O_{Z_1},\mathcal O_{Z_2})\mapsto \mathcal F^{-1}(\mathcal F(\mathcal O_{Z_1})\otimes_{\mathcal O_X}^{\rm und}\mathcal F(\mathcal O_{Z_2})). \]

    Equivalently, suppose $Z_1,Z_2\in X^{[n],\pi}_W$, then $\mathcal F(\mathcal O_{Z_i})=(i_{E_W})_\ast L_i$ for some $L_i\in\operatorname{Coh}(E_W)$ (Lemma \ref{relative_fm_bc}), and the action map takes $(\mathcal O_{Z_1},\mathcal O_{Z_2})$ to $\mathcal F^{-1}((i_{E_W})_\ast( L_1\otimes L_2))$.
    Hence the map is well-defined by Proposition \ref{multiply}.

    From its definition, it is unclear that we can extend $\mu$ to families, i.e.~that it is algebraic.
    We will address this in the next section.

    The involution on $X$ relative to $C$ that dualizes a torsion-free sheaf of rank $1$ on a fiber gives rise to an involution $\nu$ of $X^{[n]}$ relative to $C^{[n]}$.
    Clearly, $\nu$ restricts to an involution $\nu|_{X^{[n],\pi}}$ of $X^{[n],\pi}$.
    \begin{proposition}
        Assuming the map $\mu$ is algebraic, then:
        \begin{enumerate}
            \item $\mu|_{X^{[n],\pi}\times_{C^{[n]}} X^{[n],\pi}}$ makes $X^{[n],\pi}$ a commutative group scheme over $C^{[n]}$, with inversion $\nu|_{X^{[n],\pi}}$ and identity section $s^{[n]}:C^{[n]}\to X^{[n]}$, and
            \item $X^{[n],\pi}$ acts, as a group scheme, on $X^{[n]}$ via $\mu$.
        \end{enumerate}
    \end{proposition}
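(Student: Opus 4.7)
The plan is to reduce each group scheme and action axiom to an equality of two morphisms into $X^{[n]}$, then check it on $\mathbb C$-points using the pointwise tensor-product description of $\mu_k$ already in place. By Fogarty's theorem $X^{[n]}$ is smooth, so the source schemes appearing in the various axioms (e.g.\ $X^{[n],\pi}\times_{C^{[n]}} X^{[n],\pi}$, $X^{[n],\pi}\times_{C^{[n]}} X^{[n]}$, and the triple analogues) are open in fiber products of smooth varieties, and in particular reduced; moreover $X^{[n]}$ is separated. Hence any two morphisms from such a reduced source to $X^{[n]}$ that agree on all closed points automatically coincide. This is essentially the only general fact I need beyond the preceding lemmas.

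With this reduction, each axiom becomes a statement about tensor products of invertible sheaves on the curve $E_W$ for a closed point $W\in C^{[n]}$, transported through $\mathcal F$. Commutativity and associativity of the law $\mu|_{X^{[n],\pi}\times_{C^{[n]}} X^{[n],\pi}}$ follow immediately from commutativity and associativity of ordinary tensor product on $E_W$; the identity axiom is immediate from Proposition \ref{section_of_fibration}, which identifies $\mathcal F(\mathcal O_{s^{[n]}(W)})$ with $\mathcal O_{E_W}$; the inverse axiom reduces to $L\otimes L^\vee\cong\mathcal O_{E_W}$ for an invertible sheaf $L$ on $E_W$, using that $\nu$ is induced fiberwise by the dualization operation on torsion-free rank-one sheaves (and hence, on the group scheme part, by the dualization operation on invertible sheaves on $E_W$). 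To check that iterated products stay inside the open on which $\mu$ is defined, I would invoke Proposition \ref{multiply}: the tensor product of two invertible sheaves of degree $0$ on each component of the reduction of $E_W$ is again such a sheaf, so the product of two elements of $X^{[n],\pi}_W$ still lies in $X^{[n],\pi}_W$.

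Part (2), that $\mu$ is a group scheme action of $X^{[n],\pi}$ on $X^{[n]}$, is the same pointwise calculation run with one factor now permitted to be arbitrary in $X^{[n]}_W$. The unit axiom $\mu(s^{[n]}(W),Z)=Z$ reduces, via Proposition \ref{section_of_fibration}, to $\mathcal O_{E_W}\otimes^{\rm und}\mathcal F(\mathcal O_Z)=\mathcal F(\mathcal O_Z)$, and the associativity axiom reduces to associativity of ordinary tensor product in $\operatorname{Coh}(E_W)$. In both cases Proposition \ref{multiply} ensures that the resulting sheaf is again of the form $\mathcal F(\mathcal O_{Z'})$ for a connected length-$n$ subscheme $Z'$, so the map lands in $X^{[n]}$ and the pointwise equality lifts to an equality of morphisms by the reduction above.

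The hard part of the overall program is the algebraicity of $\mu$ itself, which is explicitly assumed here and deferred to the next section. Once that is granted, the proof outlined above is essentially formal: nothing beyond separatedness of $X^{[n]}$, reducedness of the sources, and the structure of $(\operatorname{Pic}(E_W),\otimes)$ intervenes.
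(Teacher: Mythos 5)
Your overall reduction is the right one, and it is the same one the paper uses: the axioms are equalities of morphisms from reduced varieties into the separated scheme $X^{[n]}$, so they can be checked on closed points. But you then commit yourself to verifying the axioms at \emph{every} closed point, and at that level of generality one of your verifications has a real gap: the inverse axiom over non-reduced $W$. You assert that, on the group scheme part, $\nu$ is given by ``the dualization operation on invertible sheaves on $E_W$.'' That does not follow from anything cited. The involution $\nu$ on $X^{[n]}$ is induced by the fiberwise involution of the surface $X$ (dualization of rank-one torsion-free sheaves on the reduced fibers $X_p$), so for a connected $Z\in X^{[n],\pi}$ over $np$ what you know a priori is only that $\mathcal F(\mathcal O_{\nu(Z)})|_{X_p}\cong(\mathcal F(\mathcal O_Z)|_{X_p})^\vee$ (via Proposition \ref{geom_desc}). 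This does not pin down the line bundle on the thickening $nX_p$: the kernel of $\operatorname{Pic}(nX_p)\to\operatorname{Pic}(X_p)$ is a large affine group (cf.\ Remark \ref{deltareg}), so $\mathcal F(\mathcal O_{\nu(Z)})\cong\mathcal F(\mathcal O_Z)^\vee$ on $nX_p$ is a genuine additional statement. It is true, but proving it requires a kernel-level compatibility of Bridgeland's $\mathcal P$ with the involution (an ``inverse equals dual'' identity for the relative Poincar\'e sheaf with the chosen normalization), which neither the paper's quoted results nor your argument supplies. The remaining checks (commutativity, associativity, the identity via Proposition \ref{section_of_fibration}, closure of the product in $X^{[n],\pi}$ via Proposition \ref{multiply}, and the action axioms) are fine at all closed points.

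The repair is cheap and is exactly what the paper does: the same reduced-source/separated-target argument shows it suffices to check the identities on the closed points of any Zariski-dense open, so one may base-change to the locus of reduced $W\in C^{[n]}$ with $X_W$ smooth. There $X^{[n],\pi}_W=X^{[n]}_W$ is a product of smooth elliptic curves with the standard group law transported by $\mathcal F$, and every axiom, including the inverse axiom for $\nu$, is immediate from Theorem \ref{surfacegroupscheme}. If you restrict your pointwise verification to that dense open, your write-up becomes a complete proof; as stated, the claim about $\nu$ on thickened fibers is an unproven step.
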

    \begin{proof}
        The axioms are identities involving morphisms between reduced varieties, so it is enough to verify them on the $k$-points of a Zariski-dense open.
        In particular, one may check them after a base-change to the open locus in $C^{[n]}$ consisting of reduced subschemes $W$ such that $X_W$ is smooth.
        Both claims are immediate there.
    \end{proof}
    \section{Algebraicity of the group action}\label{ss:algebraicity}
    An obvious way to attempt the extension of the definition of $\mu$ to families is as follows:
    For any scheme $T$ (which we may assume to be a variety, as $X^{[n],\pi}\times_{C^{[n]}}X^{[n]}$, $X^{[n]}$, and $X^{[n],\pi}$ all are), one might attempt to define $\mu_T$ with the same formula except that we apply a base-changed version $\mathcal F_T$ of our original Fourier-Mukai transform instead.
    \begin{definition}
        For any $T$ as above, we write $\mathcal F_T$ for the transform with kernel $\mathcal P_T\in\operatorname{Coh}((X\times_CX)\times T) =\operatorname{Coh}((X\times T)\times_{C\times T}(X\times T))$
    \end{definition}

    There are some difficulties to make this work directly.
    In general, a sheaf on $X\times T$ flat over $T$ is not determined, up to the action of $\operatorname{Pic}(T)$, by its values on the fibers under the second projection.
    \begin{example}
        Take two elliptic curves $E,E'$ and consider the Poincar\'e sheaf relative to $E\times E'\to E'$.
        Choose any length $2$ subscheme concentrated at $(e,e')$ that is not contained in $\{e\}\times E'$ nor $E\times\{e'\}$ and consider its Fourier-Mukai image $\mathcal G$.
        By Proposition \ref{geom_desc}, it is the pushforward of an invertible sheaf on $2(E\times \{e'\})$ whose restriction to $E\times \{e'\}$ is isomorphic to $\mathcal O_{E\times \{e'\}}$.

        Now this sheaf and $\mathcal O_{2(E\times \{e'\})}$ are both flat over $E$, and their restrictions to the fibers are both isomorphic to $\mathcal O_{2e'}$.
        But they are not in the same $\operatorname{Pic}(E)$-orbit as they both restrict to the trivial line bundle on $E\times \{e'\}$.
    \end{example}
    To resolve this, we need to view Hilbert schemes via ideals instead.
    Consider the moduli space $\mathcal M$ whose values on $T$-points are coherent sheaves on $X\times T$ flat over $T$ whose fibers are rank $1$ torsion-free sheaves with trivial determinant, modulo action of $\operatorname{Pic}(T)$.

    The fibers will have constant Euler characteristic $\chi$ over $T$.
    This decomposes $\mathcal M$ into connected components $M_{\chi(\mathcal O)-\chi}$.
    As points have codimension $2$ in $X$, we have a well-known isomorphism $X^{[n]}\to\mathcal M_n$ (see e.g.~\cite[Example 4.3.6]{Huybrechts_Lehn}) by forgetting the inclusion map of the ideal sheaf into $\mathcal O_X$, i.e.
    \[
    X^{[n]}(T)\to \mathcal M_n(T),[Z\hookrightarrow X\times T]\mapsto [\mathcal I_Z].
    \]

    Let us first understand the pointwise image of these ideal sheaves.
    Write $\mathcal F(\mathcal O)=s_\ast\Theta[-1]$ where $\Theta$ is a line bundle on $C$.
    For a finite subscheme $Z\subset X$ of length $n$, we apply $\mathcal F$ to the corresponding exact triangle.
    This gives an exact triangle
    \[\mathcal F(\mathcal I_Z)\to s_\ast\Theta[-1]\to\mathcal F(\mathcal O_Z)\to\mathcal F(\mathcal I_Z)[1].\]
    So $\mathcal F(\mathcal I_Z)[1]$ must be a sheaf that fits into an exact sequence
    \[0\to \mathcal F(\mathcal O_Z)\to \mathcal F(\mathcal I_Z)[1]\to s_\ast\Theta\to 0.\]
    Note that this means, in particular, that if $Z\in X^{[n]}_{n_1p_1+\cdots+n_rp_r}$, then $\mathcal F(\mathcal I_Z)$ is supported on the effective Cartier divisor $D_{n_1p_1+\cdots+n_rp_r}=s+E_{n_1p_1+\cdots+n_rp_r}$.

    Conversely, we can completely characterize extensions of this form.
    \begin{proposition}\label{ext2_checkhilb}
        Suppose we have an extension
        \[0\to \mathcal F(\mathcal O_Z)\to \mathcal G\to s_\ast\Theta\to 0.\]
        Then $\mathcal G\cong\mathcal F(\mathcal I_Z)[1]$ if and only if $\mathcal G$ is simple, i.e.~$\dim\operatorname{End}(\mathcal G)=1$.
    \end{proposition}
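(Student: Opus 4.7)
The plan is to transport the problem to $X$ via the autoequivalence $\mathcal F$. Since $\mathcal F(\mathcal O_X)=s_\ast\Theta[-1]$, applying $\mathcal F^{-1}$ to the extension triangle $\mathcal F(\mathcal O_Z)\to\mathcal G\to s_\ast\Theta\to\mathcal F(\mathcal O_Z)[1]$ and rotating yields a distinguished triangle
\[\mathcal O_X\xrightarrow{\phi}\mathcal O_Z\to\mathcal F^{-1}\mathcal G\to\mathcal O_X[1]\]
for some $\phi\in\operatorname{Hom}(\mathcal O_X,\mathcal O_Z)=H^0(\mathcal O_Z)$ corresponding bijectively to the extension class under the identifications $\operatorname{Ext}^1(s_\ast\Theta,\mathcal F(\mathcal O_Z))=\operatorname{Ext}^1(\mathcal O_X[1],\mathcal O_Z)=\operatorname{Hom}(\mathcal O_X,\mathcal O_Z)$. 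Writing $E_\phi=\mathcal F^{-1}\mathcal G$, the equivalence $\mathcal F$ identifies $\operatorname{End}(\mathcal G)=\operatorname{End}(E_\phi)$, so the claim reduces to showing that $E_\phi$ is simple iff $\phi$ is surjective, in which case the triangle gives $E_\phi\cong\mathcal I_Z[1]$.

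The only-if direction is formal: if $\mathcal G\cong\mathcal F(\mathcal I_Z)[1]$, then $\operatorname{End}(\mathcal G)=\operatorname{End}(\mathcal I_Z)=k$, since $\mathcal I_Z$ is a rank-one torsion-free coherent sheaf on the smooth integral surface $X$.

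For the converse, the plan is to apply $\operatorname{Hom}(E_\phi,-)$ to the triangle above and exploit two vanishings: $\operatorname{Hom}(\mathcal O_Z,\mathcal O_X)=0$ because $\mathcal O_Z$ is torsion and $\mathcal O_X$ is torsion-free, and $\operatorname{Ext}^1(\mathcal O_Z,\mathcal O_X)=0$ by Serre duality on the surface $X$, since $\mathcal O_Z$ has zero-dimensional support. Substituting $M=\mathcal O_X$ gives $\operatorname{Hom}(E_\phi,\mathcal O_X)=0$ and $\operatorname{Ext}^1(E_\phi,\mathcal O_X)=k$. Substituting $M=\mathcal O_Z$, the map $\operatorname{End}(\mathcal O_Z)\to\operatorname{Hom}(\mathcal O_X,\mathcal O_Z)$ induced by $\phi$ is multiplication by $\phi$, and assembling the long exact sequence yields
\[\dim\operatorname{End}(E_\phi)=\dim\operatorname{Ann}_{H^0(\mathcal O_Z)}(\phi)+1,\]
the $+1$ coming from the fact that the connecting map $E_\phi\to\mathcal O_X[1]$ is necessarily nonzero, which in turn follows from the absence of any retraction $\mathcal O_Z\to\mathcal O_X$.

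Hence $E_\phi$ is simple iff $\operatorname{Ann}_{H^0(\mathcal O_Z)}(\phi)=0$. Since $H^0(\mathcal O_Z)$ is a product of Artinian local rings indexed by the connected components of $Z$, this is equivalent to $\phi$ being a unit in each factor, equivalently to $\phi\colon\mathcal O_X\to\mathcal O_Z$ being surjective, in which case $\operatorname{cone}(\phi)=\mathcal I_Z[1]$ and $\mathcal G\cong\mathcal F(\mathcal I_Z)[1]$. The main obstacle is organizing the long exact sequence cleanly and verifying the Serre-duality vanishing; once these are in place, the remaining steps are essentially formal unit-vs-zero-divisor considerations in the Artinian ring $H^0(\mathcal O_Z)$.
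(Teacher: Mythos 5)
Your proposal is correct, and it shares the paper's opening move but diverges at the heart of the argument. Like the paper, you apply $\mathcal F^{-1}$ to reduce everything to the triangle $\mathcal O_X\xrightarrow{\phi}\mathcal O_Z\to\mathcal F^{-1}\mathcal G\to\mathcal O_X[1]$, and your treatment of the surjective case (kernel forced to be $\mathcal I_Z$, $\operatorname{End}(\mathcal I_Z)=k$ since $\mathcal I_Z$ is rank one torsion-free) is the same as the paper's. For the converse, however, the paper argues structurally: when $\phi$ is not surjective it factors through its image $\mathcal O_{Z'}\subsetneq\mathcal O_Z$ with cokernel $\mathcal O_{Z''}\neq 0$, and the paper asserts that $\mathcal F^{-1}(\mathcal G[-1])\cong\mathcal I_{Z'}\oplus\mathcal O_{Z''}[-1]$, so that $\mathcal G$ is a direct sum of the two nonzero sheaves $\mathcal F(\mathcal I_{Z'})[1]$ and $\mathcal F(\mathcal O_{Z''})$ and hence not simple. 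You instead compute $\operatorname{End}(\mathcal G)$ head-on: using $\operatorname{Hom}(\mathcal O_Z,\mathcal O_X)=0$ and the Serre-duality vanishing $\operatorname{Ext}^1(\mathcal O_Z,\mathcal O_X)=0$, the long exact sequences give $\operatorname{Hom}(E_\phi,\mathcal O_X)=0$, $\operatorname{Ext}^1(E_\phi,\mathcal O_X)\cong k$, $\operatorname{Hom}(E_\phi,\mathcal O_Z)\cong\operatorname{Ann}_{H^0(\mathcal O_Z)}(\phi)$, and (since $\delta\neq0$, as a splitting would produce a retraction in $\operatorname{Hom}(\mathcal O_Z,\mathcal O_X)=0$) the formula $\dim\operatorname{End}(\mathcal G)=\dim\operatorname{Ann}_{H^0(\mathcal O_Z)}(\phi)+1$; simplicity is then equivalent to $\phi(1)$ being a non-zerodivisor, hence a unit in the Artinian ring $H^0(\mathcal O_Z)$, hence to surjectivity of $\phi$. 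What your route buys is a quantitative refinement (an exact formula for $\dim\operatorname{End}(\mathcal G)$) and, notably, it sidesteps the splitting $\mathcal C\cong\mathcal I_{Z'}\oplus\mathcal O_{Z''}[-1]$, which in the paper is stated without justification and is the one step there requiring extra care; the paper's argument, in exchange, is more geometric and exhibits the explicit decomposition of $\mathcal G$ when it fails to be simple. Your bookkeeping checks out, so the only polishing needed is to spell out the identifications $\operatorname{End}(\mathcal O_Z)\cong\operatorname{Hom}(\mathcal O_X,\mathcal O_Z)\cong H^0(\mathcal O_Z)$ and the fact that the kernel of a surjection $\mathcal O_X\to\mathcal O_Z$ given by a unit section is literally $\mathcal I_Z$ (not just an ideal with isomorphic quotient).
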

    \begin{proof}
        Let $\mathcal C=\mathcal F^{-1}(\mathcal G[-1])$.
        Then we have an exact triangle $\mathcal C\to \mathcal O\to\mathcal O_Z\to\mathcal C[1]$.

        If $\mathcal O\to \mathcal O_Z$ is surjective, then its kernel is forced to be $\mathcal I_Z$, so $\mathcal C\cong\mathcal I_Z$.
        Note that in this case $\dim\operatorname{End}(\mathcal G)=\dim\operatorname{End}(\mathcal I_Z)=1$ as $\mathcal I_Z$ is torsion-free of rank $1$.

        Otherwise, the image of this map, as a quotient of $\mathcal O$ and a subsheaf of $\mathcal O_Z$, must be isomorphic to $\mathcal O_{Z'}$ for some finite length subcheme $Z'$; the cokernel, on the other hand, is a quotient of $\mathcal O_Z$, and therefore too is isomorphic to $\mathcal O_{Z''}$ for some other finite length (nonempty) subscheme $Z''$.

        Suppose $Z$ is reduced, then $\mathcal C\cong\mathcal I_{Z'}\oplus\mathcal O_{Z''}[-1]$, which has nontrivial endomorphisms since it is a direct sum of nonzero objects.
        Hence $\mathcal G=\mathcal F(\mathcal C)[1]$ cannot be simple either.

        In general, let $\mathcal Z\hookrightarrow X\times X^{[n]}$ be the universal subscheme, then $(\operatorname{pr}_{X^{[n]}})_\ast\mathcal O_{\mathcal Z}$ is a vector bundle of rank $n$.
        Choose a small open $U$ around $Z$ that trivializes it.
        The object $\mathcal C\cong[\mathcal O\to\mathcal O_Z]$ is then the specialization of an object of the form $[\mathcal O_{X\times U}\to \mathcal O_{\mathcal Z}|_U]$.
        The claim then follows from upper semi-continuity of $\dim\operatorname{End}(\mathcal C)=\dim\mathbb H^0\mathbf R\underline{\operatorname{Hom}}(\mathcal C,\mathcal C)$ (cf.~\cite[Proposition 6.4]{semicont}).
    \end{proof}
    \begin{remark}
        Given such a $\mathcal G$, note that $\operatorname{Hom}(\mathcal G,s_\ast\Theta)=\operatorname{Hom}(\mathcal I_Z,\mathcal O)$ has dimension $1$, so we can always recover $\mathcal F(\mathcal O_Z)$ this way.
    \end{remark}
    Let us now identify points in $X^{[n],\pi}$ using the images of their ideals.
    \begin{definition}
        We call an invertible sheaf over $D_{n_1p_1+\cdots+n_rp_r}$ strongly numerically trivial if it restricts to $\mathcal O$ on $s(C)$, and to a degree $0$ invertible sheaf on $X_{p_i}$ for each $i$.
    \end{definition}
    In particular, a strongly numerically trivial invertible sheaf $L$ over $D_W$ would have $(i_{E_W})_\ast L|_{E_W}=\mathcal F(\mathcal O_Z)$ for some $Z\in X^{[n],\pi}_W$.

    Let us now fix $W\in C^{[n]}$.
    For notational convenience, we write $E$ for $E_W$, $D$ for $D_W$, and $F$ for $s(W)=E_W\times_Xs(C)$.
    \begin{proposition}\label{multiply_ideals}
        Suppose $Z\in X^{[n]}_W$ and $L$ is a strongly numerically trivial invertible sheaf on $D$, then $(i_D)_\ast L\otimes^{\rm und}\mathcal F(\mathcal I_Z)[1]=\mathcal F(\mathcal I_{Z'})[1]$ for some $Z'\in X^{[n]}_W$.
        Moreover, $\mathcal F(\mathcal O_{Z'})=(i_E)_\ast L|_E\otimes^{\rm und}\mathcal F(\mathcal O_Z)$.
    \end{proposition}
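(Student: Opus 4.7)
The plan is to verify the hypotheses of Proposition \ref{ext2_checkhilb} for $\mathcal G := (i_D)_\ast L \otimes^{\rm und} \mathcal F(\mathcal I_Z)[1]$, which will then identify $\mathcal G$ with $\mathcal F(\mathcal I_{Z'})[1]$ for a suitable $Z'$. The ``moreover'' clause is essentially the content of Proposition \ref{multiply}: decomposing $Z$ into connected components via Corollary \ref{ext1_checkhilb} and applying Proposition \ref{multiply} to each piece, I obtain $Z'\in X^{[n]}_W$ with $\mathcal F(\mathcal O_{Z'})=(i_E)_\ast(L|_E\otimes \mathcal F(\mathcal O_Z)|_E)=(i_E)_\ast L|_E\otimes^{\rm und}\mathcal F(\mathcal O_Z)$.

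With this candidate $Z'$ in hand, I tensor the simple extension
\[
0\to \mathcal F(\mathcal O_Z)\to \mathcal F(\mathcal I_Z)[1]\to s_\ast\Theta\to 0
\]
from Proposition \ref{ext2_checkhilb} by $(i_D)_\ast L$ non-derivedly. Right-exactness yields a right-exact sequence whose leftmost term is $\mathcal F(\mathcal O_{Z'})$ (by the previous paragraph combined with the projection formula $(i_D)_\ast L\otimes^{\rm und}(i_E)_\ast(-)=(i_E)_\ast(L|_E\otimes-)$) and whose rightmost term is $s_\ast\Theta$ (by the projection formula along $s$ together with $L|_{s(C)}\cong\mathcal O_{s(C)}$ from strong numerical triviality).

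The remaining work is to promote this right-exact sequence to a short exact one and to verify simplicity of the middle term. The cleanest route is to show that $\mathcal F(\mathcal I_Z)[1]$ is scheme-theoretically supported on $D$, i.e.\ an $\mathcal O_D$-module. Granted this, the non-derived $\mathcal O_X$-tensor with $(i_D)_\ast L$ reduces to tensoring with the invertible sheaf $L$ over $\mathcal O_D$, which is exact and preserves endomorphism algebras. Both the short exactness and the simplicity of $\mathcal G$ then follow from those of the original extension, and Proposition \ref{ext2_checkhilb} identifies $\mathcal G\cong \mathcal F(\mathcal I_{Z'})[1]$.

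The main obstacle I anticipate is the $\mathcal O_D$-module claim: for $Z\in X^{[n],\pi}_W$ it is manifest since $\mathcal F(\mathcal I_Z)[1]$ is then a line bundle on $D$, but the general case requires relative Fourier-Mukai base change (Lemma \ref{relative_fm_bc}) along $W\hookrightarrow C$ to control the fiber direction. If that reduction proves technically awkward, a fallback is to compute the obstruction $\operatorname{Tor}_1^{\mathcal O_X}((i_D)_\ast L,s_\ast\Theta)$ directly---it turns out to be supported on $s(C)$---and verify that the boundary map into $\mathcal F(\mathcal O_{Z'})$ vanishes at $F=s(W)$ using the triviality of $L|_{s(C)}$.
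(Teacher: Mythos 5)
Your argument is correct and is essentially the paper's own proof, which simply cites Proposition \ref{multiply} together with the observation that tensoring with an invertible sheaf on the support does not change the dimension of the endomorphism ring, so that Proposition \ref{ext2_checkhilb} applies. The step you flag as a possible obstacle is in fact immediate: $\mathcal F(\mathcal O_Z)$ is pushed forward from $E$ by Lemma \ref{relative_fm_bc}(b), $s_\ast\Theta$ is killed by $\mathcal I_{s(C)}$, and any extension of the latter by the former is killed by $\mathcal I_{s(C)}\mathcal I_E=\mathcal I_D$, so $\mathcal F(\mathcal I_Z)[1]$ is an $\mathcal O_D$-module (as the paper notes right after introducing the extension sequence), making the tensor with $L$ exact and simplicity-preserving.
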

    \begin{proof}
        This is essentially Proposition \ref{multiply}, with the additional observation that multiplication by a line bundle does not change the dimension of the endomorphism ring.
    \end{proof}
    \begin{lemma}\label{pushoutDEF}
        The pushout $E\sqcup_Fs(C)$ is isomorphic to $D$.
    \end{lemma}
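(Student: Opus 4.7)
The plan is to interpret $E \sqcup_F s(C)$ as a scheme-theoretic union inside $X$ and match it with $D$ by a local computation. First I would observe that $D = s(C) + E$ already contains both $s(C)$ and $E$ as closed subschemes, and that their scheme-theoretic intersection in $X$ is precisely $F = E \times_X s(C)$ by definition. The universal property of the pushout then furnishes a canonical comparison morphism $E \sqcup_F s(C) \to D$, and it suffices to verify that this morphism is an isomorphism.

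The statement is local on $X$, so I would reduce to checking it in a neighborhood of each point of $D$. Away from $F$ the two subschemes are disjoint and the claim is trivial. Near a point $q = s(p_i) \in F$, the standing hypotheses of the section---smoothness of $X$, integrality of $X_{p_i}$ with singular points off the section, and $s$ being a section of $\pi$---provide local parameters $(u, t)$ for $\mathcal{O}_{X,q}$ with $(u)$ cutting out $s(C)$ and $(t) = \pi^\ast \mathfrak{m}_{p_i} \mathcal{O}_{X,q}$ cutting out $X_{p_i}$, forming a regular system of parameters. In these coordinates, the subschemes $E$, $F$, and $D$ are cut out locally by $(t^{n_i})$, $(u, t^{n_i})$, and $(ut^{n_i})$ respectively.

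The pushout claim then reduces to the assertion that
\[
\begin{tikzcd}
\mathcal{O}_{X,q}/(ut^{n_i}) \arrow{r}\arrow{d} & \mathcal{O}_{X,q}/(t^{n_i}) \arrow{d} \\
\mathcal{O}_{X,q}/(u) \arrow{r} & \mathcal{O}_{X,q}/(u, t^{n_i})
\end{tikzcd}
\]
is a pullback square of rings, equivalently that the associated Mayer--Vietoris sequence is short exact. The only substantive input is the identity $(u) \cap (t^{n_i}) = (ut^{n_i})$, which follows from the regularity of the sequence $u, t$ (if $ux = t^{n_i} y$, then $y \in (u)$ since $u$ is a non-zero-divisor modulo $t^{n_i}$). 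No genuine obstacle is expected; the key structural ingredient---a regular sequence cutting out $s(C)$ and $E$ at each point of $F$---is guaranteed by the smoothness of $X$ and the transversality of $s$ with the integral fibers.
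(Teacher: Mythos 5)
Your argument is correct, but it proceeds differently from the paper. The paper's proof is purely formal: it takes the natural morphism $E\sqcup_Fs(C)\to D$, observes it is a monomorphism and universally closed (valuative criterion), hence a closed immersion, and then concludes it is an isomorphism because it is generically an isomorphism on each irreducible component and $D$, being a Cartier divisor on a smooth surface, is pure. You instead verify the isomorphism by an explicit local computation: at a point $q=s(p_i)$ you choose a regular system of parameters $(u,t)$ cutting out $s(C)$ and $X_{p_i}$, identify the ideals of $E,F,D$ as $(t^{n_i})$, $(u,t^{n_i})$, $(ut^{n_i})$, and reduce everything to the exactness of $0\to R/(ut^{n_i})\to R/(t^{n_i})\oplus R/(u)\to R/(u,t^{n_i})\to 0$, i.e.\ to $(u)\cap(t^{n_i})=(ut^{n_i})$, which holds since $u,t$ is a regular sequence. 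This is sound, granted the standard fact (which the paper also uses implicitly in the next lemma) that a pushout along closed immersions is computed affine-locally by the fiber product of coordinate rings; your transversality claims follow from $s$ being a section with the fiber singularities off the section. The trade-off: your route is more elementary and makes the local structure of $D$ along the section completely explicit (which is in the spirit of how the paper later uses $\mathcal O_D=\mathcal O_E\times_{\mathcal O_F}\mathcal O_{s(C)}$), while the paper's argument avoids coordinates and would apply verbatim in situations where writing down local equations is less convenient. One small slip in your parenthetical: from $ux=t^{n_i}y$ and $u$ being a nonzerodivisor modulo $(t^{n_i})$ you conclude $x\in(t^{n_i})$, not $y\in(u)$ (the latter follows instead from $t^{n_i}$ being a nonzerodivisor modulo $(u)$); either variant yields $(u)\cap(t^{n_i})=(ut^{n_i})$, so the proof stands.
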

    \begin{proof}
        The natural morphism $E\sqcup_Fs(C)\to D$ is a monomorphism and is universally closed by the valuative criterion, so it must be a closed immersion.
        But it is also an isomorphism generically on each irreducible component and $D$ is pure, so it is an isomorphism.
    \end{proof}
    \begin{lemma}
        We have $\operatorname{Pic}(D)\cong \operatorname{Pic}(E)\times\operatorname{Pic}(s(C))$, with the projection maps given by pullbacks to $E$ and $s(C)$, respectively.
    \end{lemma}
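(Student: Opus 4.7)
The plan is to derive the Zariski Mayer-Vietoris sequence for sheaves of units from the pushout description in Lemma \ref{pushoutDEF}, and then to kill the connecting homomorphism using the section $s$.

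The first step is to establish the short exact sequence of sheaves of abelian groups on $D$:
\[1 \to \mathcal{O}_D^\times \to \iota_{E,*}\mathcal{O}_E^\times \oplus \iota_{s(C),*}\mathcal{O}_{s(C)}^\times \to \iota_{F,*}\mathcal{O}_F^\times \to 1,\]
where the $\iota$'s denote the closed immersions into $D$. Exactness on the left and in the middle is stalk-local: at any point $x$, the pushout identifies $\mathcal{O}_{D,x}$ with the ring-theoretic fibre product $\mathcal{O}_{E,x} \times_{\mathcal{O}_{F,x}} \mathcal{O}_{s(C),x}$, and units of a fibre product of rings are exactly pairs of units agreeing on the base. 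Surjectivity on the right is the claim that units lift along the closed immersions $F \hookrightarrow E$ and $F \hookrightarrow s(C)$; this is clear because the kernels of the corresponding local ring surjections lie in the maximal ideals.

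Taking the associated cohomology long exact sequence yields
\[H^0(\mathcal{O}_E^\times)\oplus H^0(\mathcal{O}_{s(C)}^\times)\xrightarrow{\alpha}H^0(\mathcal{O}_F^\times)\to\operatorname{Pic}(D)\to\operatorname{Pic}(E)\oplus\operatorname{Pic}(s(C))\to\operatorname{Pic}(F).\]
Surjectivity of the map to $\operatorname{Pic}(E)\oplus\operatorname{Pic}(s(C))$ then follows once $\operatorname{Pic}(F)=0$, which holds since $F = s(W)$ is a finite disjoint union of spectra of Artinian local rings. For injectivity it suffices to show $\alpha$ is surjective: I would use the pullback $\pi^*\colon \mathcal{O}_W\to\mathcal{O}_E$, which is a ring-theoretic section of the restriction $\mathcal{O}_E\to\mathcal{O}_F$ composed with $s^*$, and which sends units to units because the reduction of $\pi^*u$ is the locally constant function $u|_{W_{\mathrm{red}}}$ on $E_{\mathrm{red}}=\bigsqcup_iX_{p_i}$, each component being integral projective and hence admitting only scalar global units.

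The main subtlety is the right exactness of the sheaf-level sequence, which requires the stalkwise identification of $\mathcal{O}_{D,x}$ as a fibre product and the lifting of units along infinitesimal extensions; once that is in hand, the remaining steps are standard manipulations of the long exact sequence together with the observation that $\operatorname{Pic}(F)=0$ and the construction of the lift via $\pi^*$.
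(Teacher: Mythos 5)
Your proof is correct and follows essentially the same route as the paper: the unit-sheaf short exact sequence coming from the pushout $D=E\sqcup_F s(C)$, the long exact sequence together with $\operatorname{Pic}(F)=0$, and surjectivity of the map on global units via the section of restriction to $F$ provided by $\pi^\ast$. The only stylistic difference is your detour through the reduction $E_{\rm red}$ to see that $\pi^\ast$ preserves units, which is automatic since ring homomorphisms always send units to units.
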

    \begin{proof}
        We have $\mathcal O_D=\mathcal O_E\times_{\mathcal O_F}\mathcal O_{s(C)}$ by the preceding lemma.
        This induces an exact sequence of abelian groups
        \[
            1\to \mathcal O_D^\times\to \mathcal O_E^\times\times\mathcal O_{s(C)}^\times\to\mathcal O_F^\times\to 1
        \]
        where the second-to-last arrow is given by $(a,b)\mapsto ab^{-1}$.

        Note that $\operatorname{Pic}(F)=1$ since it is finite, so we have an exact sequence
        \[
        \Gamma(\mathcal O_E^\times)\times \Gamma(\mathcal O_{s(C)}^\times)\to \Gamma(\mathcal O_F^\times)\to\operatorname{Pic}(D)\to\operatorname{Pic}(E)\times\operatorname{Pic}(s(C))\to 0
        \]
        Now the first map is surjective.
        Indeed, the map $\Gamma(\mathcal O_E^\times)\to \Gamma(\mathcal O_F^\times)$ given by restriction (i.e. pulling back via $s$) is surjective since it has a section given by pulling back via $\pi$.
        And this restriction map factors as $\Gamma(\mathcal O_E^\times)\to\Gamma(\mathcal O_E^\times)\times \Gamma(\mathcal O_{s(C)}^\times)\to \Gamma(\mathcal O_F^\times)$ with the first map given by $a\mapsto (a,1)$.
    \end{proof}
    \begin{proposition}\label{tensor_adjust}
        There is a bijection between strongly numerically trivial invertible sheaf $L$ on $D$ and sheaves of the form $\mathcal F(\mathcal I_Z)[1]$ for $Z\in X^{[n],\pi}_W$, given by the formula
        \[L\mapsto i_{D,\ast}L\otimes(\pi^\ast\Theta\otimes \pi^\ast s^{\ast}\mathcal O(-s)\otimes\mathcal O(s)).\]
        Moreover, if $L$ maps to $\mathcal F(\mathcal I_Z)[1]$, then $(i_E)_\ast L|_E=\mathcal F(\mathcal O_Z)$.
    \end{proposition}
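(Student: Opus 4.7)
The plan is to verify that $i_{D,*}L\otimes \mathcal L$ fits the simple-extension characterization of Proposition \ref{ext2_checkhilb}, and then invert the map via the Picard decomposition of $D$. The key structural input is that $\mathcal L := \pi^*\Theta\otimes \pi^*s^*\mathcal O(-s)\otimes \mathcal O(s)$ is engineered so that $\mathcal L|_{s(C)} = \Theta$ and $\mathcal L|_E = \mathcal O(s)|_E$. The first identity holds because $\pi|_{s(C)}$ is inverse to $s$ and the two $\mathcal O(\pm s)$ factors cancel; the second uses that $\pi|_E$ factors through the zero-dimensional $W$, so $\pi^*\Theta$ and $\pi^*s^*\mathcal O(-s)$ become trivial on $E$.

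Combined with the divisor identity $\mathcal I_{s(C)/D}\cong (i_E)_*\mathcal O(-s)|_E$ (obtained from $0\to \mathcal O_X(-D)\to \mathcal O_X(-s)\to \mathcal O_X(-s)|_E\to 0$ by recognizing the image of the composition $\mathcal O_X(-s)\to \mathcal O_X\to \mathcal O_D$), I would tensor the $\mathcal O_D$-sequence $0\to \mathcal I_{s(C)/D}\to \mathcal O_D\to \mathcal O_{s(C)}\to 0$ with $L\otimes \mathcal L|_D$ and push forward via $i_D$ to obtain
\[
0\to (i_E)_*(L|_E)\to i_{D,*}L\otimes \mathcal L\to s_*\Theta\to 0.
\]
To apply Proposition \ref{ext2_checkhilb} I need this extension to be simple. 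Since closed-immersion pushforward and tensoring with a line bundle preserve endomorphism rings, it suffices to show $H^0(D,\mathcal O_D) = k$; Mayer--Vietoris for $D = E\sqcup_F s(C)$ from Lemma \ref{pushoutDEF} reduces this to the kernel of the difference map $\mathcal O_W\oplus k\to \mathcal O_W$, which is one-dimensional. Hence $i_{D,*}L\otimes \mathcal L\cong \mathcal F(\mathcal I_Z)[1]$ for some $Z\in X^{[n]}_W$ with $\mathcal F(\mathcal O_Z) = (i_E)_*(L|_E)$; strong numerical triviality of $L|_E$ together with Proposition \ref{geom_desc} then forces $Z\in X^{[n],\pi}_W$, handling the ``moreover'' assertion simultaneously.

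Injectivity of the map follows from $L = i_D^*i_{D,*}L$. For surjectivity, given $Z\in X^{[n],\pi}_W$ I would write $\mathcal F(\mathcal O_Z) = (i_E)_*L_0$ and use the Picard decomposition $\operatorname{Pic}(D)\cong \operatorname{Pic}(E)\times \operatorname{Pic}(s(C))$ established in the preceding lemma to pick the unique strongly numerically trivial $L$ with $L|_E = L_0$ and $L|_{s(C)} = \mathcal O_{s(C)}$. The construction above then produces $\mathcal F(\mathcal I_{Z'})[1]$ with $\mathcal F(\mathcal O_{Z'}) = \mathcal F(\mathcal O_Z)$, and since $\mathcal O_Z$ is recovered from $\mathcal F(\mathcal O_Z)$ via the equivalence $\mathcal F$ and the annihilator determines $Z$, we get $Z' = Z$. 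The main obstacle is the first-paragraph bookkeeping: verifying that the three twists in $\mathcal L$ conspire correctly so the kernel of the extension is $(i_E)_*(L|_E)$ itself rather than a nontrivial twist thereof, which crucially relies on the triviality of pullbacks from the zero-dimensional scheme $W$.
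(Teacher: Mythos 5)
Your proposal is correct and follows essentially the same route as the paper: the same divisor sequence $0\to\mathcal O(-s)|_E\to\mathcal O_D\to\mathcal O_{s(C)}\to 0$ twisted to produce the extension, an appeal to Proposition \ref{ext2_checkhilb} with simplicity checked via $\Gamma(\mathcal O_D)$ and Lemma \ref{pushoutDEF}, and surjectivity via the $\operatorname{Pic}(D)\cong\operatorname{Pic}(E)\times\operatorname{Pic}(s(C))$ decomposition (the paper differs only cosmetically, first reducing to $L=\mathcal O_D$ via Proposition \ref{multiply_ideals} instead of twisting by the general $L$ directly). The one step you assert without justification is $\Gamma(\mathcal O_E)\cong\mathcal O_W$ in your Mayer--Vietoris reduction; that the restriction $\Gamma(\mathcal O_E)\to\Gamma(\mathcal O_F)$ is an isomorphism is precisely what the paper verifies by filtering $\mathcal O_E$ by $n$ sheaves of the form $\mathcal O_{X_{p_i}}$ to get $\dim\Gamma(\mathcal O_E)\le n$, so this point deserves a sentence in a complete write-up.
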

    \begin{proof}
        Let us first show that this is well-defined and that the last formula is true.
        In view of Proposition \ref{multiply_ideals}, It suffices to show that $(i_D)_\ast i_D^\ast(\pi^\ast\Theta\otimes \pi^\ast s^{\ast}\mathcal O(-s)\otimes\mathcal O(s))$ has the form given by Proposition \ref{ext2_checkhilb}.
        We start with the exact sequence
        \[
        0\to \mathcal O(-s)/\mathcal O(-D)\to \mathcal O_D\to s_\ast\mathcal O\to 0
        \]
        Note that we have $\mathcal O(-s)/\mathcal O(-D)\cong\mathcal O(-s)|_E$.
        Twisting this sequence with $\pi^\ast\Theta\otimes \pi^\ast s^{\ast}\mathcal O(-s)\otimes\mathcal O(s)$ then gives
        \[
        0\to \mathcal O_E\to (i_D)_\ast i_D^\ast(\pi^\ast\Theta\otimes \pi^\ast s^{\ast}\mathcal O(-s)\otimes\mathcal O(s)) \to s_\ast\Theta\to 0
        \]
        We need to show that the middle entry is simple.
        This is equivalent to say that $\mathcal O_D$ is simple.
        
        By Lemma \ref{pushoutDEF}, $\operatorname{End}(\mathcal O_D)=\Gamma(\mathcal O_D)=\Gamma(\mathcal O_E\times_{\mathcal O_F}\mathcal O_{s(C)})=\Gamma(\mathcal O_E)\times_{\Gamma(\mathcal O_F)}\Gamma(\mathcal O_{s(C)})$.
        Now $\Gamma(\mathcal O_E)\to\Gamma(\mathcal O_F)$ has a section given by pulling back through $\pi$, so it is surjective.
        The latter has dimension $n$.
        On the other hand, $\mathcal O_E$ is the result of an $n$-fold extension by sheaves of the form $\mathcal O_{X_p}$, which has $1$-dimensional global sections.
        Hence $\dim\Gamma(\mathcal O_E)\le n$.
        Therefore $\Gamma(\mathcal O_E)\to\Gamma(\mathcal O_F)$ must be an isomorphism.
        This means that $\dim\operatorname{End}(\mathcal O_D)=\dim\Gamma(\mathcal O_C)=1$.

        Injectivity is clear.
        It remains to show surjectivity.
        For any $Z\in X_W^{[n],\pi}$, we know that $\mathcal F(\mathcal O_Z)\in\operatorname{Pic}(E_W)$ and has degree $0$ on the reduction of each component, so there is a strongly numerically trivial line bundle $L$ over $D_W$ such that $L|_{E_W}=\mathcal F(\mathcal O_Z)$ by the preceding lemma.
    \end{proof}
    We can then define a map $\mu_T:(X^{[n],\pi}\times_{C^{[n]}}X^{[n]})(T)\to X^{[n]}(T)$ for all varieties $T$, sending $([\mathcal I_{Z_1}],[\mathcal I_{Z_2}])$ to
    \[\mathcal F_T^{-1}(\mathcal F_T(\mathcal I_{Z_1})\otimes^{\rm und}\mathcal F_T(\mathcal I_{Z_2})\otimes \operatorname{pr}_X^\ast (\pi^\ast\Theta^\vee\otimes \pi^\ast s^{\ast}\mathcal O(s)\otimes\mathcal O(-s))). \]
    This is now well-defined (flatness over $T$ is preserved due to Lemma \ref{relative_fm_bc}) and functorial.
    It also recovers the same operation as before when $T=\operatorname{Spec}k$.
    This establishes the algebraicity of $\mu$.
    \begin{remark}
        A general reason why this works is that simple sheaves satisfy a seesaw principle similar to the classical version for line bundles.
        We will discuss this in more details later at Proposition \ref{seesaw}.
    \end{remark}
    \begin{remark}\label{stablesh}
        One might wonder whether $\mathcal F(\mathcal I_Z)[1]$ can be viewed as (semi)stable one-dimensional sheaves of a certain kind.
        Indeed this is the case, for sufficiently large $n$ and a suitably chosen normalization of $\mathcal P$.
        This is part of the results of \cite{yos}.
        In particular, the group action may be interpreted as tensor product (adjusted appropriately as in Proposition \ref{tensor_adjust}) in a moduli space of (semi)stable sheaves on the curve family $D_W,W\in C^{[n]}$.
    \end{remark}
    \begin{remark}\label{deltareg}
        Let us analyze this group action on fibers.
        For any $p\in C$ and $n\ge 1$, we have an exact sequence
        \[
        0\to 1+I\to \mathcal O_{nX_p}^\times\to\mathcal O_{(n-1)X_p}^\times\to 0
        \]
        where $I$ is the (square-zero) ideal of $(n-1)X_p$ in $nX_p$.
        Note that $I$ is isomorphic to $\mathcal O_{X_p}$ as $\mathcal O_{nX_p}$-modules.
        So $H^0(1+I)=\mathbb G_a$ and $H^1(1+I)=0$, and therefore $\operatorname{Pic}(nX_p)$ is an extension of $\operatorname{Pic}((n-1)X_p)$ by $\mathbb G_a$.

        Inductively, if we take any $W\in C^{[n]}$, then the abelian part of $X^{[n],\pi}_W$ is the abelian part of $\operatorname{Pic}^0(E_W^{\rm red})$, which is a product of smooth fibers appearing in $E_W$.

        Consequently, the group action $\mu$ is a $\delta$-regular abelian fibration (see \cite{ngo} for the relevant definitions).
        Indeed, for any geometric point $Z'\in X^{[n]}_W$, if $Z\in X^{[n],\pi}_W$ stabilizes $Z'$, then necessarily the invertible sheaf on $E_W$ corresponding to $Z$ restricts to $\mathcal O$ on $E_W^{\rm red}$ by Proposition \ref{multiply}.
        Hence $Z$ is in the affine part of $X^{[n],\pi}_W$.
        The group scheme is polarizable by the general result of \cite{polaris}.
        Moreover, the affine part of $X^{[n],\pi}_W$ has dimension equal to $\delta=n-r_W$ where $r_W$ is the number of smooth fibers in $E_W$.
        So the set of $W$ such that the affine part of $X^{[n],\pi}_W$ is $\delta$-dimensional has dimension $n-\delta$ exactly.
        This establishes $\delta$-regularity.
    \end{remark}
    \section{The BKR equivalence and a Poincar\'e sheaf}\label{ss:BKR}
    \begin{definition}
        For an object $\mathcal Q\in D\operatorname{Coh}(M'\times M)$, its transpose is defined as $\mathcal Q^\top=\nu^\ast\mathcal Q$, where $\nu:M\times M'\to M'\times M$ is the morphism swapping the factors.
    \end{definition}
    \begin{example}
        The kernel $\mathcal P$ of the transform in Theorem \ref{surfacegroupscheme} satisfies $\mathcal P=\mathcal P^\top$.
    \end{example}
    The following lemma is immediate.
    \begin{lemma}\label{transpose}
        We have $(\mathcal Q\circ\mathcal R)^\top=\mathcal R^\top\circ\mathcal Q^\top$.
    \end{lemma}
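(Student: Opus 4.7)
The plan is to unwind both sides of the identity from the definition of convolution of kernels and reduce to a flat base-change statement. Write $\mathcal R\in D(M_1\times M_2)$ and $\mathcal Q\in D(M_2\times M_3)$, so that
\[
\mathcal Q\circ\mathcal R=\mathbf R(\pi_{13})_\ast(\pi_{12}^\ast\mathcal R\otimes^{\mathbf L}\pi_{23}^\ast\mathcal Q)
\]
with $\pi_{ij}:M_1\times M_2\times M_3\to M_i\times M_j$, and likewise $\mathcal R^\top\circ\mathcal Q^\top=\mathbf R(\pi'_{13})_\ast(\pi'_{12}{}^\ast\mathcal Q^\top\otimes^{\mathbf L}\pi'_{23}{}^\ast\mathcal R^\top)$ with $\pi'_{ij}:M_3\times M_2\times M_1\to$ the corresponding two-fold product. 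The goal is then to compare $\nu_{13}^\ast\mathcal Q\circ\mathcal R$ with this latter expression.

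The first step is to introduce the reversal isomorphism $\sigma:M_3\times M_2\times M_1\to M_1\times M_2\times M_3$, $(a,b,c)\mapsto (c,b,a)$, and observe that the square
\[
\begin{tikzcd}
M_3\times M_2\times M_1\ar[r,"\sigma"]\ar[d,"\pi'_{13}"']&M_1\times M_2\times M_3\ar[d,"\pi_{13}"]\\
M_3\times M_1\ar[r,"\nu_{13}"]&M_1\times M_3
\end{tikzcd}
\]
is Cartesian (direct verification of the universal property). Since $\nu_{13}$ is an isomorphism, flat base change applies and yields $\nu_{13}^\ast\mathbf R(\pi_{13})_\ast=\mathbf R(\pi'_{13})_\ast\sigma^\ast$.

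Next, I would compute the pullbacks through $\sigma$ of the two factors. Unpacking, $\pi_{12}\circ\sigma=\nu\circ\pi'_{23}$ and $\pi_{23}\circ\sigma=\nu\circ\pi'_{12}$, where $\nu$ denotes in each case the appropriate factor swap. Pulling back,
\[
\sigma^\ast\pi_{12}^\ast\mathcal R=\pi'_{23}{}^\ast\nu^\ast\mathcal R=\pi'_{23}{}^\ast\mathcal R^\top,\qquad \sigma^\ast\pi_{23}^\ast\mathcal Q=\pi'_{12}{}^\ast\mathcal Q^\top.
\]
Combining this with the projection formula and the base-change identity gives
\[
\nu_{13}^\ast(\mathcal Q\circ\mathcal R)=\mathbf R(\pi'_{13})_\ast(\pi'_{12}{}^\ast\mathcal Q^\top\otimes^{\mathbf L}\pi'_{23}{}^\ast\mathcal R^\top)=\mathcal R^\top\circ\mathcal Q^\top,
\]
which is the desired equality.

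There is no real obstacle; this is a piece of categorical bookkeeping. The only point that requires care is to verify the Cartesian square, which is what justifies invoking flat base change, and to track the swap maps $\nu$ consistently so the hypotheses $\nu^2=\mathrm{id}$ can be used to cancel double swaps when identifying $\sigma^\ast\pi_{ij}^\ast(\cdot)$ with $\pi'_{kl}{}^\ast(\cdot)^\top$.
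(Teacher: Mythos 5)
Your computation is correct and is exactly the standard kernel bookkeeping that the paper treats as immediate (it states the lemma without proof); the reversal isomorphism $\sigma$, the identification $\sigma^\ast\pi_{ij}^\ast(\cdot)=\pi'_{kl}{}^\ast(\cdot)^\top$, and base change along the isomorphism $\nu_{13}$ are precisely the implicit steps. Note only that in the paper the kernels live on fiber products over $B$ (and on quotient stacks), but your argument goes through verbatim with $\times$ replaced by $\times_B$.
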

    Let $X$ be a smooth variety of dimension at most $2$.
    The symmetric group $S_n$ acts on $X^n$ by permuting the coordinates.
    We consider the $S_n$-isospectral Hilbert scheme $\tilde{X}_n\hookrightarrow X^{[n]}\times X^n$ on $X^n$.
    It is normal and Gorenstein by \cite[Theorem 1, \S3.8]{haiman}.
    And it comes equipped with morphisms
    \[
    \begin{tikzcd}
        X^{[n]}& {[}\tilde{X}_n/S_n{]} \arrow{l}[swap]{q}\arrow{r}{p}&{[}X^n/S_n{]}
    \end{tikzcd}
    \]
    such that $p$ is birational and $q$ is a finite flat $S_n$-quotient.
    \begin{remark}\label{BKRcurves}
        When $X$ has dimension $1$, i.e.~is a smooth curve, we would simply have $\tilde{X}_n\cong X^n$ via $p$, and $q$ is the quotient $[X^n/S_n]\to X^{(n)}$ with $S_n$ acting on the coordinates.
    \end{remark}

    Bridgeland-King-Reid \cite{BKR} showed that the transform with kernel $\mathcal O_{\tilde{X}_n}$, namely
    \[\operatorname{BKR}=\mathbf Rp_\ast\circ q^\ast:D^b\operatorname{Coh}(X^{[n]})\to D^b\operatorname{Coh}([X^n/S_n])\]
    is an equivalence of categories.
    Its transpose $\operatorname{BKR}^\top=q_\ast\circ \mathbf Lp^\ast$ is the transform considered by Krug \cite{krug}.
    Note that $q_\ast:\operatorname{Coh}([\tilde{X}_n/S_n])\to\operatorname{Coh}(X^{[n]})$ is already exact, see e.g.~\cite[Example 12.9]{goodmodulispace}.
    Viewed in terms of $S_n$-equivariant sheaves, $q_\ast$ takes a coherent sheaf to the $S_n$-invariant component of its pushforward via the quotient map $\tilde{X}_n\to X^{[n]}$.

    Now suppose $X\to C$ is an elliptic surface.
    Write $B=[C^n/S_n]$, which receives proper flat morphisms from $[X^n/S_n]$ as well as $[\tilde{X}_n/S_n]$.
    Note that $p$ is then a $B$-morphism.
    \begin{lemma}
        Suppose $M'$ is integral and Gorenstein, $M'\to B$ is proper flat, and $\mathcal Q\in\operatorname{Coh} (M'\times_B [X^n/S_n])$ is maximal Cohen-Macaulay.
        Then $\operatorname{graph}_p\circ \mathcal Q$, viewed as a complex over $M'\times_B[\tilde{X}_n/S_n]$, is a maximal Cohen-Macaulay sheaf.
    \end{lemma}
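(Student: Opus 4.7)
The plan is to identify $\operatorname{graph}_p\circ\mathcal Q$ as a derived pullback, factor that pullback through an auxiliary Cohen--Macaulay intermediate, and conclude by a Cohen--Macaulay intersection argument.

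Since $\operatorname{graph}_p$ is the pushforward of the structure sheaf along the closed immersion $(p,\operatorname{id}):[\tilde{X}_n/S_n]\hookrightarrow[X^n/S_n]\times_B[\tilde{X}_n/S_n]$, a projection-formula computation inside the convolution formula identifies $\operatorname{graph}_p\circ\mathcal Q\simeq\mathbf L\tilde p^\ast\mathcal Q$, where $\tilde p:=\operatorname{id}_{M'}\times p:M'\times_B[\tilde{X}_n/S_n]\to M'\times_B[X^n/S_n]$. Using the closed embedding $(p,q):[\tilde{X}_n/S_n]\hookrightarrow[X^n/S_n]\times_B X^{[n]}$ coming from $\tilde{X}_n\hookrightarrow X^n\times X^{[n]}$, I factor
\[
M'\times_B[\tilde{X}_n/S_n]\xrightarrow{\iota} W:=M'\times_B[X^n/S_n]\times_B X^{[n]}\xrightarrow{\operatorname{pr}} M'\times_B[X^n/S_n],
\]
where $\iota$ is a closed immersion and $\operatorname{pr}$ forgets the $X^{[n]}$ factor.

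Next, $\pi^{[n]}:X^{[n]}\to B$ is flat with Cohen--Macaulay fibers (by an equidimensionality count using smoothness of $X^{[n]}$ and regularity of $B$ together with the integral-fiber assumption on $\pi$), so the base change $\operatorname{pr}$ is flat with Cohen--Macaulay fibers and $W$ is itself Cohen--Macaulay; hence $\operatorname{pr}^\ast\mathcal Q$ is MCM on $W$. Both $W$ and the source $M'\times_B[\tilde{X}_n/S_n]$ are Cohen--Macaulay (the latter using Haiman's theorem that the isospectral Hilbert scheme is Cohen--Macaulay, together with flat base change over $B$), and $\iota$ has the expected codimension $n=\dim X^{[n]}-\dim B$, so $\mathcal O_{M'\times_B[\tilde{X}_n/S_n]}$ is itself MCM as a sheaf on $W$. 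Applying the Cohen--Macaulay intersection theorem to the two MCM sheaves $\operatorname{pr}^\ast\mathcal Q$ and $\mathcal O_{M'\times_B[\tilde{X}_n/S_n]}$ on $W$, whose supports meet in the expected codimension $0+n=n$, yields vanishing of the positive $\operatorname{Tor}$ sheaves and Cohen--Macaulayness of the restriction $\iota^\ast\operatorname{pr}^\ast\mathcal Q=\mathbf L\iota^\ast\operatorname{pr}^\ast\mathcal Q$ of the expected maximal dimension on $M'\times_B[\tilde{X}_n/S_n]$, which is the desired conclusion.

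The main obstacle is arranging the hypotheses for the Cohen--Macaulay intersection step, in particular verifying flatness of $\pi^{[n]}$ with Cohen--Macaulay fibers under the integral-fiber assumption on $\pi$, and Cohen--Macaulayness of the source $M'\times_B[\tilde{X}_n/S_n]$ (via Haiman's theorem and the flatness of $M'\to B$); once these inputs are in place, the argument is essentially formal.
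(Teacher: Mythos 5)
Your first step agrees with the paper: after the projection-formula computation, $\operatorname{graph}_p\circ\mathcal Q$ is the derived pullback of $\mathcal Q$ along $p_{M'}=\operatorname{id}_{M'}\times p$. But from there the paper's argument is short and different: $p$ has finite Tor-dimension because its target $[X^n/S_n]$ is smooth, this property is preserved under the base change to $M'$, both source and target of $p_{M'}$ are Gorenstein (Haiman's theorem for $\tilde X_n$, the hypothesis on $M'$, and flatness over $B$), and then Arinkin's Lemma~2.3 (finite-Tor-dimension pullback of a maximal Cohen--Macaulay sheaf between Gorenstein schemes is maximal Cohen--Macaulay) finishes. Your replacement for this step has a genuine gap. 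The ``Cohen--Macaulay intersection theorem'' you invoke --- two Cohen--Macaulay sheaves meeting in the expected codimension have vanishing higher $\operatorname{Tor}$ and Cohen--Macaulay tensor product --- is a statement whose standard proof (reduction to the diagonal) requires the ambient space to be \emph{regular}. Your ambient $W=M'\times_B[X^n/S_n]\times X^{[n]}$ is only Gorenstein: $M'$ is merely assumed Gorenstein, and even the factor $[X^n/S_n]\times_{C^{[n]}}X^{[n]}$ is singular. Over a non-regular Gorenstein ring the principle fails: on the hypersurface $A=k[[x,y,u,v]]/(xy-uv)$ the Cohen--Macaulay modules $A/(x,u)$ and $A/(x,v)$ meet properly (codimension $1+1=2$) yet $\operatorname{Tor}_1^A\neq 0$. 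Worse, in your situation one of the two sheaves, $\operatorname{pr}^\ast\mathcal Q$, has full support, so the ``expected codimension $0+n$'' condition is vacuous; the principle you need would say that an MCM sheaf is Tor-independent from \emph{every} CM closed subscheme, which is false. Since $\iota$ is a closed immersion into a singular space and is not known to be a regular immersion ($\tilde X_n$ is Gorenstein but not l.c.i.\ in general), you have no finite-Tor-dimension or Koszul argument to substitute, and this is exactly the content the paper extracts from the smoothness of $[X^n/S_n]$ as the target of $p$.

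Two smaller points: there is no morphism $X^{[n]}\to B=[C^n/S_n]$ (only to the coarse space $C^{[n]}$), so the fiber product defining $W$ and the closed embedding $(p,q)$ must be taken over $C^{[n]}$; and the flatness of $\pi^{[n]}:X^{[n]}\to C^{[n]}$ with Cohen--Macaulay fibers, while provable by miracle flatness, is an auxiliary input the paper's route never needs. I would recommend abandoning the detour through $W$ and arguing directly with the finite Tor-dimension of $p_{M'}$ and the Gorenstein property of its source and target.
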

    \begin{proof}
        Denote
        \[
        \begin{tikzcd}
            &M'\times_B{[}\tilde{X}_n/S_n{]}\arrow{d}{i=\operatorname{id}\times (p, \operatorname{id})}&\\
            M'\times_B{[}X^n/S_n{]} & M'\times_B{[}X^n/S_n{]}\times_B{[}\tilde{X}_n/S_n{]}\arrow{r}{\operatorname{pr_{23}}}\arrow{d}{\operatorname{pr_{13}}}\arrow[swap]{l}{\operatorname{pr_{12}}} & {[}X^n/S_n{]}\times_B{[}\tilde{X}_n/S_n{]}\\
            & M'\times_B{[}\tilde{X}_n/S_n{]} &
        \end{tikzcd}
        \]
        We then have
        \[\operatorname{graph}_p\circ \mathcal Q=\mathbf R(\operatorname{pr}_{13})_\ast(\operatorname{pr}_{12}^\ast\mathcal Q\otimes^{\mathbf L}\operatorname{pr}_{23}^\ast\operatorname{graph}_p)=\mathbf R(\operatorname{pr}_{13})_\ast i_\ast\mathbf L i^\ast\operatorname{pr}_{12}^\ast\mathcal Q=\mathbf Lp_{M'}^\ast\mathcal Q\]
        where $p_{M'}:M'\times_B[\tilde{X}_n/S_n]\to M'\times_B[X^n/S_n]$ is the base-change of $p$ to $M'$.
        Since $p$ has finite Tor-dimension (due to having a smooth target), $p_{M'}$ too have finite Tor-dimension.
        As $M'$, $[\tilde{X}_n/S_n]$, and $[X^n/S_n]$ are all Gorenstein, their respective flat morphisms to $B$ are Gorenstein, so both the domain and codomain of $p_{M'}$ are Gorenstein.

        Hence $\mathbf Lp_{M'}^\ast\mathcal Q$ is maximal Cohen-Macaulay by \cite[Lemma~2.3]{arinkin}
    \end{proof}
    Now suppose $Y$ is another elliptic surface fiberd over $C$.
    Write $\tilde{Y}_n$ for the corresponding isospectral Hilbert scheme, and $p_Y,q_Y$ for the projections.
    \begin{proposition}\label{BKR_congruence}
        Suppose $\mathcal Q\in\operatorname{Coh}([X^n/S_n]\times_B[Y^n/S_n])$ is maximal Cohen-Macaulay.
        Then $\mathcal O_{\tilde{Y}_n}^\top\circ\mathcal Q\circ\mathcal O_{\tilde{X}_n}$ is supported on $X^{[n]}\times_{C^{[n]}}Y^{[n]}$ and is a maximal Cohen-Macaulay sheaf there.
    \end{proposition}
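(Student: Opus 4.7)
The plan is to reduce the statement to the preceding lemma by rewriting the triple kernel composition as a single pushforward–pullback expression. I would first note that $\mathcal{O}_{\tilde{X}_n}$ is the kernel of the functor $Rp_{X,\ast}\circ Lq_X^\ast$ and $\mathcal{O}_{\tilde{Y}_n}^\top$ is the kernel of $Rq_{Y,\ast}\circ Lp_Y^\ast$, so the triple composition represents the functor
\[Rq_{Y,\ast}\circ Lp_Y^\ast\circ\Phi_{\mathcal{Q}}\circ Rp_{X,\ast}\circ Lq_X^\ast.\]
Unraveling this by base change and the projection formula around the fiber product $[\tilde{X}_n/S_n]\times_B[\tilde{Y}_n/S_n]$ identifies its kernel with
\[R(q_X\times q_Y)_\ast\,L(p_X\times p_Y)^\ast\mathcal{Q},\]
where the morphisms are the natural ones from $[\tilde{X}_n/S_n]\times_B[\tilde{Y}_n/S_n]$ to $[X^n/S_n]\times_B[Y^n/S_n]$ and to $X^{[n]}\times_{C^{[n]}} Y^{[n]}$ respectively. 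The image of $q_X\times q_Y$ lies in the fiber product over $C^{[n]}$ because the two natural maps $[\tilde{X}_n/S_n]\to C^{[n]}$, one through $X^{[n]}$ and one through $B$, coincide (and likewise on the $Y$ side).

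With this reformulation, the maximal Cohen--Macaulay property is produced by applying the preceding lemma twice. Viewing $\mathcal{Q}$ as a sheaf on $[Y^n/S_n]\times_B[X^n/S_n]$ and taking $M'=[Y^n/S_n]$ (integral Gorenstein and proper flat over $B$), the preceding lemma yields that the derived pullback of $\mathcal Q$ along $\operatorname{id}\times p_X$ is maximal Cohen--Macaulay on $[Y^n/S_n]\times_B[\tilde{X}_n/S_n]$. Setting $M'=[\tilde{X}_n/S_n]$ for the next application---which is integral Gorenstein by Haiman's theorem and proper flat over $B$---the analogous statement for $Y$ then produces $L(p_X\times p_Y)^\ast\mathcal{Q}$ as a maximal Cohen--Macaulay sheaf on $[\tilde{X}_n/S_n]\times_B[\tilde{Y}_n/S_n]$.

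The final step is to push forward by $q_X\times q_Y$. This morphism is finite: it factors as the closed immersion $[\tilde{X}_n/S_n]\times_B[\tilde{Y}_n/S_n]\hookrightarrow[\tilde{X}_n/S_n]\times[\tilde{Y}_n/S_n]$ followed by the finite product map to $X^{[n]}\times Y^{[n]}$. Thus $R(q_X\times q_Y)_\ast=(q_X\times q_Y)_\ast$ is exact and yields a coherent sheaf, with support in $X^{[n]}\times_{C^{[n]}}Y^{[n]}$. Since source and target are equidimensional of the same dimension $3n$ and the pushforward is finite, depth is preserved and the resulting sheaf is again maximal Cohen--Macaulay.

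The main obstacle is the base-change bookkeeping in the first step: identifying the triple convolution with the compact expression $R(q_X\times q_Y)_\ast L(p_X\times p_Y)^\ast\mathcal{Q}$ requires carefully invoking several projection formulas and base changes across fiber products over $B$, together with verifying Tor-independence (which holds because $p_X$ and $p_Y$ have finite Tor-dimension). Once this identification is in place, the remaining two steps are routine applications of the preceding lemma and of standard facts about finite pushforward.
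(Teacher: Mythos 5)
Your proposal is correct and takes essentially the same route as the paper: the paper also applies the preceding lemma twice (phrased as composing with $\operatorname{graph}_p$ and $\operatorname{graph}_{p_Y}$, whose proof identifies this with $\mathbf L(p\times p_Y)^\ast\mathcal Q$) and then composes with the graphs of the finite flat quotient maps $q,q_Y$, i.e.\ pushes forward along $q\times q_Y$ into $X^{[n]}\times_{C^{[n]}}Y^{[n]}$. The only cosmetic differences are that your map $[\tilde X_n/S_n]\times_B[\tilde Y_n/S_n]\to[\tilde X_n/S_n]\times[\tilde Y_n/S_n]$ is finite rather than a closed immersion (the diagonal of $B=[C^n/S_n]$ is finite but not a monomorphism), and that the paper justifies the final step by observing that the stacky pushforward is the $S_n\times S_n$-invariant part, hence a direct summand, of the finite pushforward from $\tilde X_n\times_{C^n}\tilde Y_n$ --- which is the cleaner way to make precise your ``depth is preserved under finite pushforward'' claim.
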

    \begin{proof}
        Applying the preceding lemma on $\mathcal Q^\top$ shows that $\operatorname{graph}_p\circ \mathcal Q^\top$ is a maximal Cohen-Macaulay sheaf on $[Y^n/S_n]\times_B[\tilde{X}_n/S_n]$.
        Applying the same lemma again on $(\operatorname{graph}_p\circ \mathcal Q^\top)^\top$ shows that
        \[\mathcal Q'=\operatorname{graph}_{p_Y}\circ(\operatorname{graph}_p\circ \mathcal Q^\top)^\top=\operatorname{graph}_{p_Y}\circ \mathcal Q\circ\operatorname{graph}_p^\top\]
        is a maximal Cohen-Macaulay sheaf on $[\tilde{X}_n/S_n]\times_B[\tilde{Y}_n/S_n]$.

        Now
        \[
        \mathcal O_{\tilde{X}_n}^\top\circ\mathcal Q\circ\mathcal O_{\tilde{X}_n}=\operatorname{graph}_{q_Y}^\top\circ\mathcal Q'\circ\operatorname{graph}_q.
        \]
        The claim follows as $q,q_Y$ are finite flat, and that direct summands of maximal Cohen-Macaulay sheaves are still maximal Cohen-Macaulay.
    \end{proof}

    Suppose $\mathcal P\in\operatorname{Coh}(X\times_C Y)$ is maximal Cohen-Macaulay.
    Let us form the complex $\mathcal P^{\boxtimes n}=\mathcal P\boxtimes^{\mathbf L}\cdots\boxtimes^{\mathbf L}\mathcal P$ on $(X\times_CY)^n=X^n\times_{C^n}Y^n$.
    The pushforward of this to $X^n\times Y^n$ is the same as the $n$-th outer tensor of the pushforward of $\mathcal P$ to $X\times Y$.
    Indeed we have:
    \begin{lemma}\label{closed_embeddings_outer}
        Suppose $i:Z\to M,i':Z'\to M'$ are closed embeddings, then $i_\ast A\boxtimes^{\mathbf L}(i')_\ast A'=(i\times i')_\ast(A\boxtimes^{\mathbf L}A')$.
    \end{lemma}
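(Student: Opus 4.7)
My plan is to reduce the statement to a local computation that identifies both sides with the Künneth-type module $A\otimes_k A'$ in degree zero, viewed as a sheaf on $M\times M'$ via the closed immersion $i\times i'$. This sidesteps any abstract manipulation of derived tensor products.

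Concretely, I work on an affine patch with $M=\operatorname{Spec} R$, $M'=\operatorname{Spec} R'$, $Z=\operatorname{Spec}(R/I)$, $Z'=\operatorname{Spec}(R'/I')$. The left-hand side unfolds as
\[
(A\otimes_k R')\otimes^{\mathbf L}_{R\otimes_k R'}(R\otimes_k A').
\]
Choosing a projective $R$-resolution $P_\bullet\to A$, the complex $P_\bullet\otimes_k R'$ is a projective $R\otimes_k R'$-resolution of $A\otimes_k R'$, and base-changing by $R\otimes_k A'$ produces $P_\bullet\otimes_k A'$. Since $A'$ is $k$-flat, the homology of this complex is concentrated in degree zero and equals $A\otimes_k A'$. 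The analogous computation on $Z\times Z'$, using a projective $R/I$-resolution of $A$, shows that the right-hand side $A\boxtimes^{\mathbf L}A'$ is again $A\otimes_k A'$ in degree zero. Pushforward along $i\times i'$ is then tautological, as it just reinterprets the $R/I\otimes_k R'/I'$-module $A\otimes_k A'$ as an $R\otimes_k R'$-module via the quotient map.

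A more conceptual variant, which I would mention in passing, combines flat base change for the projections $\operatorname{pr}_M,\operatorname{pr}_{M'}$ (both flat since we are over a field), the projection formula along the closed immersion $i\times\operatorname{id}_{M'}$, derived base change across the Cartesian square with corners $Z\times Z',Z\times M',M\times Z',M\times M'$, and a second projection formula along $\operatorname{id}_Z\times i'$. The only step that is not formal is the derived base change across this square, which requires Tor-independence of $i\times\operatorname{id}_{M'}$ and $\operatorname{id}_M\times i'$ over $M\times M'$. That Tor-independence is precisely what the resolution calculation above verifies, and it is the only real obstacle; it works exactly because we are over a field, which makes the exterior factor $R'/I'$ automatically $k$-flat and thus collapses all higher Tor contributions. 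Everything else in the argument is bookkeeping with the six-functor formalism.
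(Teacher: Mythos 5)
Your main argument, the affine-local K\"unneth computation, is correct but takes a genuinely different route from the paper. The paper's proof is purely formal: it writes $i_\ast A\boxtimes^{\mathbf L}i'_\ast A'$ as a tensor product of pullbacks, applies flat base change across the Cartesian square $Z\times M'\to M\times M'$ over the flat projection to $M$, then the projection formula along $i\times\operatorname{id}_{M'}$, then a second \emph{flat} base change --- this time for $i'_\ast A'$ along the flat projection $\operatorname{pr}_{M'}^{Z\times M'}:Z\times M'\to M'$ --- and finally the projection formula along $\operatorname{id}_Z\times i'$. Your resolution computation instead identifies both sides with $A\otimes_k A'$ concentrated in degree zero; it makes the vanishing of all higher Tor terms visible by hand, at the price of being tied to the ground field (every exactness you use is $k$-flatness), of being phrased for sheaves rather than complexes (the paper also applies the lemma only to sheaves, so this is harmless, though K-flat resolutions would be needed otherwise), and of needing one small supplementary remark: to get a global isomorphism you should either note that your degree-zero identifications are the canonical underived ones $\operatorname{pr}_M^\ast i_\ast A\otimes\operatorname{pr}_{M'}^\ast i'_\ast A'\cong(i\times i')_\ast(\operatorname{pr}_Z^\ast A\otimes\operatorname{pr}_{Z'}^\ast A')$, or construct the natural comparison map $i_\ast A\boxtimes^{\mathbf L}i'_\ast A'\to(i\times i')_\ast(A\boxtimes^{\mathbf L}A')$ by adjunction and observe that your affine computation shows it is a quasi-isomorphism.

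One inaccuracy in your closing paragraph: the ``conceptual variant'' you sketch is not forced through Tor-independent base change across the square with corners $Z\times Z'$, $Z\times M'$, $M\times Z'$, $M\times M'$. You are right that that square has two non-flat legs and that base change across it requires Tor-independence (which your K\"unneth computation verifies), but the paper's ordering of the steps avoids this square altogether: after the first flat base change and projection formula one sits on $Z\times M'$, and the second base change is performed for $i'_\ast A'$ along the flat projection to $M'$, so every base change invoked is flat and the entire argument is formal bookkeeping. The ``only real obstacle'' you identify is therefore avoidable, which is precisely what makes the paper's proof work without any Tor computation.
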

    \begin{proof}
        We have
        \begin{align*}
            i_\ast A\boxtimes^{\mathbf L}(i')_\ast A'&=(\operatorname{pr}_M^{M\times M'})^\ast i_\ast A\otimes^{\mathbf L}(\operatorname{pr}_{M'}^{M\times M'})^\ast (i')_\ast A'\\
            &=(i\times\operatorname{id}_M)_\ast(\operatorname{pr}_{Z}^{Z\times M'})^\ast A\otimes^{\mathbf L}(\operatorname{pr}_{M'}^{M\times M'})^\ast (i')_\ast A'\\
            &=(i\times\operatorname{id}_M)_\ast((\operatorname{pr}_{Z}^{Z\times M'})^\ast A\otimes^{\mathbf L}(i\times\operatorname{id}_M)^\ast(\operatorname{pr}_{M'}^{M\times M'})^\ast (i')_\ast A')\\
            &=(i\times\operatorname{id}_M)_\ast((\operatorname{pr}_{Z}^{Z\times M'})^\ast A\otimes^{\mathbf L}(\operatorname{pr}_{M'}^{Z\times M'})^\ast (i')_\ast A')\\
            &=(i\times\operatorname{id}_M)_\ast((\operatorname{pr}_{Z}^{Z\times M'})^\ast A\otimes^{\mathbf L}(\operatorname{id}_Z\times i')_\ast(\operatorname{pr}_{Z'}^{Z\times Z'})^\ast A')\\
            &=(i\times\operatorname{id}_M)_\ast(\operatorname{id}_Z\times i')_\ast((\operatorname{id}_Z\times i')^\ast(\operatorname{pr}_{Z}^{Z\times M'})^\ast A\otimes^{\mathbf L}(\operatorname{pr}_{Z'}^{Z\times Z'})^\ast A')\\
            &=(i\times i')_\ast(A\boxtimes^{\mathbf L}A').\qedhere
        \end{align*}
    \end{proof}
    \begin{proposition}\label{mcm_outer_tensor}
        Suppose $Z,Z'$ are Gorenstein and projective.
        Let $A,A'$ be maximal Cohen-Macaulay sheaves on $Z,Z'$.
        Then $A\boxtimes^{\mathbf L}A'\in D\operatorname{Coh}(Z\times Z')$ is a maximal Cohen-Macaulay sheaf.
    \end{proposition}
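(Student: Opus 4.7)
My plan is as follows. Since both $Z$ and $Z'$ are Gorenstein and projective, so is $Z\times Z'$. On a Gorenstein variety $W$, being maximal Cohen-Macaulay is equivalent (at each point of the support) to the vanishing $\mathcal{E}xt^i(-,\mathcal O_W)=0$ for $i>0$. I will therefore compute $\mathbf{R}\mathcal{H}om(A\boxtimes^{\mathbf L}A',\mathcal O_{Z\times Z'})$ via a K\"unneth-type identity that separates the two factors, and then invoke the MCM hypotheses on $A$ and $A'$ individually.

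First I check that $A\boxtimes^{\mathbf L}A'$ is concentrated in degree zero. This is a general fact about external tensor products that does not yet require MCM, and can be verified locally. Setting $R=\mathcal O_{Z,z}$, $S=\mathcal O_{Z',z'}$, $M=A_z$, $N=A'_{z'}$, choose a flat $R$-resolution $P_\bullet\to M$. Then $P_\bullet\otimes_k S$ is a flat $(R\otimes_k S)$-resolution of $p^*A=M\otimes_k S$, and tensoring it with $q^*A'=R\otimes_k N$ over $R\otimes_k S$ recovers $P_\bullet\otimes_k N$, which remains exact because $k$ is a field.

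Next I apply $\mathbf{R}\mathcal{H}om(-,\mathcal O_Z\boxtimes\mathcal O_{Z'})$ to $A\boxtimes A'$ and use the K\"unneth formula for external tensor products
\[\mathbf{R}\mathcal{H}om(A\boxtimes A',\mathcal O_Z\boxtimes\mathcal O_{Z'})\cong\mathbf{R}\mathcal{H}om(A,\mathcal O_Z)\boxtimes^{\mathbf L}\mathbf{R}\mathcal{H}om(A',\mathcal O_{Z'}).\]
By the MCM hypotheses on $A$ and $A'$, each factor on the right is a sheaf concentrated in degree zero, namely $A^\vee$ and $A'^\vee$. Applying the first step to $A^\vee$ and $A'^\vee$ in place of $A, A'$ shows that their external tensor product is again a sheaf. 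Hence $\mathcal{E}xt^i(A\boxtimes A',\mathcal O_{Z\times Z'})=0$ for all $i>0$, so $A\boxtimes A'$ is maximal Cohen-Macaulay on $Z\times Z'$.

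The main technical point is the K\"unneth formula for $\mathbf{R}\mathcal{H}om$: I expect to prove it locally by combining the internal adjunction $\mathbf{R}\mathcal{H}om(F\otimes G,H)=\mathbf{R}\mathcal{H}om(F,\mathbf{R}\mathcal{H}om(G,H))$ with flat base change along $p$ and $q$, after resolving (say) $A'$ by a bounded-above complex of finite locally free sheaves, which is always possible locally on a quasi-projective variety. Alternatively, one can bypass the K\"unneth formula by working with depth directly: the additivity $\operatorname{depth}_{R\otimes_k S}(M\otimes_k N)=\operatorname{depth}_R M+\operatorname{depth}_S N$ for finitely generated modules over local Noetherian $k$-algebras, together with $\dim(R\otimes_k S)=\dim R+\dim S$, preserves the equality $\operatorname{depth}=\dim$ that characterizes MCM.
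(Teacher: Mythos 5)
Your argument is correct, and it reaches the conclusion by a route that is intrinsically on $Z\times Z'$ rather than through an ambient smooth embedding. The paper's proof chooses closed embeddings $i:Z\to M$, $i':Z'\to M'$ into smooth projective varieties, pushes $A\boxtimes^{\mathbf L}A'$ forward, factors the derived dual of the pushforward on $M\times M'$, shows each factor is concentrated in a single degree ($d$, resp.\ $d'$) via Grothendieck duality for the embeddings and flat pullback of MCM sheaves, and then invokes Arinkin's criterion (a complex in $D^{\le 0}$ whose dual lies in $D^{\le d+d'}$ and whose support has codimension at least $d+d'$ is a Cohen--Macaulay sheaf of that codimension), before transferring back to $Z\times Z'$. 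You instead stay on the Gorenstein product and use the local-duality characterization: over a Gorenstein local ring, a finitely generated module is MCM if and only if $\operatorname{Ext}^{i}(-,\mathcal O)$ vanishes for $i>0$; combined with your local K\"unneth identity $\mathbf R\mathcal{H}om(A\boxtimes A',\mathcal O)\cong A^\vee\boxtimes^{\mathbf L}A'^\vee$ (proved with finite free resolutions over the field $k$, which is exactly where flatness over $k$ kills all Tor and gives the K\"unneth for the Hom total complex) and the observation that an external derived product of sheaves is a sheaf, this closes the argument. The two proofs share the same engine --- the dual of an external product factors, and MCM duals sit in a single degree --- but your packaging avoids both the auxiliary smooth embeddings and the codimension bookkeeping of Arinkin's lemma, at the price of having to justify the Gorenstein Ext-vanishing criterion and the local K\"unneth yourself (both standard). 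Your alternative via depth additivity $\operatorname{depth}(M\otimes_k N)=\operatorname{depth}M+\operatorname{depth}N$ at closed points also works, but state or cite that additivity carefully (concatenate maximal regular sequences and check depth zero of the quotient at the maximal ideal $\mathfrak m\otimes S+R\otimes\mathfrak n$), and note that checking MCM at closed points suffices because the non-Cohen--Macaulay locus of a coherent sheaf is closed.
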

    \begin{proof}
        Choose smooth projective varieties $M,M'$ and closed embeddings $i:Z\to M,i':Z'\to M'$.
        Let $d,d'$ be the respective codimensions.

        Write $N=i_\ast A\boxtimes^{\mathbf L}(i')_\ast A'\in D^b\operatorname{Coh}(M\times M')$.
        We have $N=(i\times i')_\ast(A\boxtimes^{\mathbf L}A')$ by the preceding lemma.
        In particular, $\operatorname{codim}\operatorname{supp}(N)\ge d+d'$.

        It is also clear that $N\in D^{\le 0}(M\times M')$.
        As for its dual,
        \begin{align*}
            \mathbf R\underline{\operatorname{Hom}}(N,\mathcal O)&=\mathbf R\underline{\operatorname{Hom}}((\operatorname{pr}_M^{M\times M'})^\ast i_\ast A\otimes^{\mathbf L}(\operatorname{pr}_{M'}^{M\times M'})^\ast (i')_\ast A',\mathcal O)\\
            &=\mathbf R\underline{\operatorname{Hom}}((\operatorname{pr}_M^{M\times M'})^\ast i_\ast A,\mathbf R\underline{\operatorname{Hom}}((\operatorname{pr}_{M'}^{M\times M'})^\ast (i')_\ast A',\mathcal O))\\
            &=\mathbf R\underline{\operatorname{Hom}}((\operatorname{pr}_M^{M\times M'})^\ast i_\ast A,\mathcal O)\otimes^{\mathbf L}\mathbf R\underline{\operatorname{Hom}}((\operatorname{pr}_{M'}^{M\times M'})^\ast (i')_\ast A',\mathcal O)
        \end{align*}
        Write $A_{M'}=(\operatorname{pr}_{Z}^{Z\times M'})^\ast A$.
        It is maximal Cohen-Macaulay by \cite[Lemma~2.3]{arinkin}.
        Then
        \begin{align*}
            \mathbf R\underline{\operatorname{Hom}}((\operatorname{pr}_M^{M\times M'})^\ast i_\ast A,\omega_{M\times M'})&=\mathbf R\underline{\operatorname{Hom}}((i\times\operatorname{id}_M)_\ast A_{M'},\omega_{M\times M'})\\
            &=(i\times\operatorname{id}_M)_\ast\mathbf R\underline{\operatorname{Hom}}(A_{M'},(i\times\operatorname{id}_M)^!\omega_{M\times M'})\\
            &=(i\times\operatorname{id}_M)_\ast\mathbf R\underline{\operatorname{Hom}}(A_{M'},\omega_{Z\times M'})[-d].
        \end{align*}
        Therefore $\mathbf R\underline{\operatorname{Hom}}((\operatorname{pr}_M^{M\times M'})^\ast i_\ast A,\mathcal O)=\mathbf R\underline{\operatorname{Hom}}((\operatorname{pr}_M^{M\times M'})^\ast i_\ast A,\omega_{M\times M'})\otimes \omega_{M\times M'}^{\vee}$ is concentrated at degree $d$.
        Similarly, $\mathbf R\underline{\operatorname{Hom}}((\operatorname{pr}_{M'}^{M\times M'})^\ast (i')_\ast A',\mathcal O)$ is concentrated in degree $d'$.

        Combining these, we see that
        \[\mathbb DN=\mathbf R\underline{\operatorname{Hom}}(N,\mathcal O)\otimes\omega_{M\times M'}\in D^{\le d+d'}(M\times M').\]
        Hence $N$ is a Cohen-Macaulay sheaf of codimension $d+d'$ by \cite[Lemma 7.7]{arinkin}.
        This means that
        \begin{align*}
            (i\times i')_\ast\mathbb D(A\boxtimes^{\mathbf L}A')&=(i\times i')_\ast\mathbf R\underline{\operatorname{Hom}}(A\boxtimes^{\mathbf L}A',(i\times i')^!\omega_{M\times M'}[d+d'])\\
            &=\mathbf R\underline{\operatorname{Hom}}(N,\omega_{M\times M'})[d+d']=\mathbb DN[d+d']
        \end{align*}
        is concentrated in degree $0$.
        So $A\boxtimes^{\mathbf L}A'$ is maximal Cohen-Macaulay.
    \end{proof}
    \begin{corollary}
        The complex $\mathcal P^{\boxtimes n}$ is a maximal Cohen-Macaulay sheaf.
    \end{corollary}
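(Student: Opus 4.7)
The plan is a straightforward induction on $n$, using the preceding Proposition \ref{mcm_outer_tensor} as the inductive step. The base case $n=1$ is simply the hypothesis that $\mathcal P$ itself is maximal Cohen-Macaulay. For the inductive step, I would write
\[
    \mathcal P^{\boxtimes n} = \mathcal P^{\boxtimes (n-1)}\boxtimes^{\mathbf L}\mathcal P
\]
on $(X\times_C Y)^{n-1}\times(X\times_C Y)=(X\times_C Y)^n$, using associativity of $\boxtimes^{\mathbf L}$. Then applying Proposition \ref{mcm_outer_tensor} with $Z=(X\times_C Y)^{n-1}$, $Z'=X\times_C Y$, $A=\mathcal P^{\boxtimes(n-1)}$ (maximal Cohen-Macaulay by the inductive hypothesis), and $A'=\mathcal P$ immediately yields the conclusion.

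The only thing that genuinely needs checking is that the ambient schemes $Z$ and $Z'$ satisfy the Gorenstein-and-projective hypothesis of Proposition \ref{mcm_outer_tensor}. Projectivity of all of these is inherited from projectivity of $X$ and $Y$. For the Gorenstein property, I would argue that $X\times_C Y$ sits inside the smooth projective variety $X\times Y$ as the preimage of the diagonal $\Delta_C\hookrightarrow C\times C$ under $\pi\times\pi_Y$; since $\Delta_C$ is a Cartier divisor in $C\times C$ and the two projections are flat, $X\times_C Y$ is cut out by a single regular equation and is therefore a local complete intersection in a smooth variety, hence Gorenstein. Taking products preserves both properties (the dualizing sheaf on a product is the outer tensor of the factors' dualizing sheaves), so $(X\times_C Y)^k$ is Gorenstein and projective for all $k\ge 1$.

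There is essentially no obstacle here; the work was all done in Proposition \ref{mcm_outer_tensor}. The only conceptual point to verify is the Gorenstein statement for $X\times_C Y$, which is the mild observation above about it being a local complete intersection. I would keep the write-up to one short paragraph invoking Proposition \ref{mcm_outer_tensor} inductively, after recording this LCI observation.
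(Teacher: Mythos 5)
Your proposal is correct and is essentially the paper's (implicit) argument: the corollary is stated without proof precisely because it follows by iterating Proposition \ref{mcm_outer_tensor}, exactly as you do. Your added observation that $X\times_C Y$ is a Cartier divisor in the smooth projective $X\times Y$ (hence lci, hence Gorenstein), and that Gorenstein and projectivity pass to products, correctly supplies the hypotheses the paper leaves implicit.
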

    Suppose in addition that the Fourier-Mukai transform induced by $\mathcal P$ is an exact equivalence.
    Then $\mathcal P^{\boxtimes n}$ too induces an exact equivalence $D^b(X^n)\to D^b(Y^n)$.

    The results of \cite{ploog} says we have more.
    \begin{definition}
        For a finite group $G$ acting on a variety $M$ and a subgroup $H\le G$, we set $\operatorname{ind}_H^G:D^b\operatorname{Coh}_H(X)\to D^b\operatorname{Coh}_G(X)$ and $\operatorname{res}_H^G:D^b\operatorname{Coh}_G(X)\to D^b\operatorname{Coh}_H(X)$ for the pushforward and pullback along the natural map $[X/H]\to [X/G]$.
    \end{definition}
    Practically, $\operatorname{res}_H^G$ is simply the forgetful map, and $\operatorname{ind}_H^G$ sends a $H$-linearized object $A$ to
    \[
    \operatorname{ind}_H^GA=\bigoplus_{g\in G\backslash H}g^\ast A
    \]
    equipped with the apparent $G$-linearlisation.

    Now, we have a natural diagonal $S_n$-linearization $\rho_{\rm perm}$ of $\mathcal P^{\boxtimes n}$ by permuting the coordinates.
    We may then consider $\operatorname{ind}_{S_n}^{S_n\times S_n}(\mathcal P^{\boxtimes n},\rho_{\rm perm})$ which is a maximal Cohen-Macaulay sheaf on $D^b_{S_n\times S_n}(X^n\times_{C^n}Y^n)$.

    By \cite[Lemma 5(5)]{ploog}, the kernel $\operatorname{ind}_{S_n}^{S_n\times S_n}(\mathcal P^{\boxtimes n},\rho_{\rm perm})$ induces an equivalence $\mathcal F^{\boxtimes n}:D^b([X^n/S_n])\to D^b([Y^n/S_n])$.

    \begin{remark}\label{powerfm}
        The transform $\mathcal F^{\boxtimes}$ can be evaluated without computing the induction.
        Suppose $G$ acts on schemes $M,N$ and $\mathcal Q$ is a complex on $M\times N$ linearized by the diagonal action of $G=G_\Delta$, then
        \begin{align*}
            (\operatorname{pr}_2)_\ast(\operatorname{ind}_{G_\Delta}^{G\times G}\mathcal Q\otimes \operatorname{pr}_1^\ast A)&=(\operatorname{pr}_2)_\ast\operatorname{ind}_{G_\Delta}^{G\times G}(\mathcal Q\otimes \operatorname{res}_{G_\Delta}^{G\times G}\operatorname{pr}_1^\ast A)\\
            &=(\operatorname{pr}_2^\Delta)_\ast(\mathcal Q\otimes (\operatorname{pr}_1^\Delta)^\ast A)
        \end{align*}
        where $\operatorname{pr}_1^\Delta:[(M\times N)/G_\Delta]\to [M/G]$ and $\operatorname{pr}_2^\Delta:[(M\times N)/G_\Delta]\to [N/G]$.
    \end{remark}

    We can then compose these transforms to obtain another equivalence with Cohen-Macaulay kernel.
    \begin{corollary}\label{BKR_finalform}
        Suppose $X,Y$ are elliptic surfaces over a common base $C$, and $\mathcal P\in \operatorname{Coh}(X\times_CY)$ is a maximal Cohen-Macaulay sheaf inducing an exact equivalence $D^b(X)\to D^b(Y)$.
        Then the kernel of the composition
        \[
        \begin{tikzcd}
        D^b(X^{[n]})\arrow{r}{\operatorname{BKR}}&D^b([X^n/S_n])\arrow{r}{\mathcal F^{\boxtimes n}}&D^b([Y^n/S_n]\arrow{r}{\operatorname{BKR}^\top}&D^b(Y^{[n]})
        \end{tikzcd}
        \]
        is a maximal Cohen-Macaulay sheaf on $X^{[n]}\times_{C^{[n]}}Y^{[n]}$.
    \end{corollary}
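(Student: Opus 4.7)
The plan is to extract the kernel of the Fourier--Mukai composition in a form to which Proposition \ref{BKR_congruence} directly applies. By the standard composition formula for Fourier--Mukai kernels together with Lemma \ref{transpose}, the kernel of $\operatorname{BKR}^\top \circ \mathcal F^{\boxtimes n} \circ \operatorname{BKR}$ is $\mathcal O_{\tilde Y_n}^\top \circ K \circ \mathcal O_{\tilde X_n}$, where $K$ denotes the kernel of $\mathcal F^{\boxtimes n}$. It therefore suffices to exhibit $K$ as a maximal Cohen--Macaulay sheaf on $[X^n/S_n] \times_B [Y^n/S_n]$, for then Proposition \ref{BKR_congruence} (applied with $\mathcal Q = K$) yields the conclusion.

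For this, by iterating Proposition \ref{mcm_outer_tensor} and invoking Lemma \ref{closed_embeddings_outer}, the outer tensor $\mathcal P^{\boxtimes n}$ is a maximal Cohen--Macaulay sheaf supported on $X^n \times_{C^n} Y^n \hookrightarrow X^n \times Y^n$. Equipped with its diagonal permutation linearization $\rho_{\rm perm}$, it descends to an $S_n$-equivariant sheaf on $X^n \times_{C^n} Y^n$, i.e.~a sheaf on the quotient stack $[(X^n \times_{C^n} Y^n)/S_n]$. This stack is naturally identified with $[X^n/S_n] \times_B [Y^n/S_n]$: applying the formula for fiber products of quotient stacks to the identity maps $S_n \to S_n \leftarrow S_n$ gives $S_n \times_{S_n} S_n = S_n^\Delta$. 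Since maximal Cohen--Macaulayness is local in the smooth topology, it is preserved under descent to the stack.

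It then remains to verify that this descended object indeed represents $K$ in the formulation demanded by Proposition \ref{BKR_congruence}, namely with all transforms regarded as functors over $B$ and all kernels living on fiber products over $B$. This is precisely the content of Remark \ref{powerfm}: the induction presentation on $[X^n \times Y^n/S_n \times S_n]$ and the diagonal presentation on $[X^n/S_n] \times_B [Y^n/S_n]$ compute the same Fourier--Mukai functor. The main obstacle in carrying this out is therefore not conceptual but organizational, consisting of reconciling these two presentations of $K$; once that reconciliation is pinned down, Proposition \ref{BKR_congruence} closes the argument and simultaneously delivers both that the kernel of the composition is a sheaf (concentrated in degree zero) and that it is supported on, and maximal Cohen--Macaulay over, $X^{[n]} \times_{C^{[n]}} Y^{[n]}$.
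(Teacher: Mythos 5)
Your proposal is correct and follows essentially the same route as the paper: establish that $\mathcal P^{\boxtimes n}$ is maximal Cohen--Macaulay via Proposition \ref{mcm_outer_tensor} and Lemma \ref{closed_embeddings_outer}, realize the kernel of $\mathcal F^{\boxtimes n}$ (Ploog's induced kernel, equivalently its diagonal presentation reconciled through Remark \ref{powerfm}) as a maximal Cohen--Macaulay object on $[X^n/S_n]\times_B[Y^n/S_n]$, and conclude with Proposition \ref{BKR_congruence}. The only difference is that you spell out the identification $[X^n/S_n]\times_B[Y^n/S_n]\cong[(X^n\times_{C^n}Y^n)/S_n^\Delta]$ explicitly, which the paper leaves implicit in its one-line proof.
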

    \begin{proof}
        Proposition \ref{BKR_congruence}.
    \end{proof}

    Note that this new kernel is immediately flat over both factors, since we have:
    \begin{lemma}
        Suppose $P\to M$ is a flat Gorenstein morphism between integral schemes with $M$ smooth.
        Then any maximal Cohen-Macaulay $\mathcal Q\in\operatorname{Coh}(P)$ is flat over $M$.
    \end{lemma}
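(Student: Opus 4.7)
The argument is local on both $P$ and $M$, so it suffices to show, for each $p\in P$ mapping to $m=f(p)\in M$, that $N:=\mathcal Q_p$ is flat over $A:=\mathcal O_{M,m}$, where we set $B:=\mathcal O_{P,p}$. Since $M$ is smooth, $A$ is regular of some dimension $d=\dim A$, and we fix a regular system of parameters $x_1,\ldots,x_d\in\mathfrak m_A$.

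The core of the plan is to show that $x_1,\ldots,x_d$, viewed in $B$ via the flat local map $A\to B$, is a regular sequence on $N$, after which the local criterion for flatness will finish the job. Flatness of $A\to B$ first makes $x_1,\ldots,x_d$ into a regular sequence on $B$ with quotient $B/\mathfrak m_A B$ of dimension $\dim B - d$; lifting a system of parameters of this fiber ring to $B$ extends $x_1,\ldots,x_d$ to a full system of parameters $x_1,\ldots,x_d,y_1,\ldots,y_e$ for $B$. Because $\mathcal Q$ is maximal Cohen-Macaulay, $N$ has depth $\dim B$, and the standard fact that every system of parameters of a Noetherian local ring acts as a regular sequence on any maximal Cohen-Macaulay module over it then shows that the full sequence is $N$-regular; in particular its initial segment $x_1,\ldots,x_d$ is.

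With that in hand, the Koszul complex $K(x_\bullet;A)$ is a free resolution of $A/\mathfrak m_A$, and flatness of $A\to B$ gives $K(x_\bullet;A)\otimes_A N = K(x_\bullet;N)$, so
\[
\operatorname{Tor}_i^A(N,A/\mathfrak m_A)=H_i(K(x_\bullet;N))=0\quad\text{for }i>0
\]
by the regularity just established. Since $N$ is finitely generated over $B$ and $\mathfrak m_A B\subset\mathfrak n_B$ guarantees that $N$ is $\mathfrak m_A$-adically separated, the local criterion for flatness applied to the local Noetherian homomorphism $A\to B$ yields flatness of $N$ over $A$.

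The only place requiring a bit of care is the transition through depth: that maximal Cohen-Macaulay on $P$ at $p$ really gives us $\text{depth}_B(N)=\dim B$ and that this depth survives the descent to $A$ in the form of a regular sequence on $N$ built from parameters of $A$. Once that folklore step (available in standard references such as Bruns-Herzog) is invoked, everything else is routine.
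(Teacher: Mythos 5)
Your argument is correct, but it takes a genuinely different route from the paper. You work purely in local commutative algebra: localize at $p\in P$ over $m\in M$, use flatness of $A=\mathcal O_{M,m}\to B=\mathcal O_{P,p}$ to see that a regular system of parameters $x_1,\ldots,x_d$ of the regular ring $A$ extends to a system of parameters of $B$, invoke the standard fact that any system of parameters is a regular sequence on a maximal Cohen--Macaulay module, compute $\operatorname{Tor}_i^A(N,A/\mathfrak m_A)$ via the Koszul complex, and finish with the local criterion of flatness. The paper instead argues sheaf-theoretically: the inclusion of each fiber of $P\to M$ is a flat base change of a point inclusion into the smooth $M$, hence of finite Tor-dimension, so the derived restriction of a maximal Cohen--Macaulay sheaf to every fiber is again a sheaf (Arinkin's lemma on pullbacks of Cohen--Macaulay sheaves), and flatness follows from the fiberwise criterion in Huybrechts' book. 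Both proofs ultimately rest on the same Tor-vanishing against residue fields of $M$; yours is self-contained and, notably, never uses the Gorenstein hypothesis (flatness of $P\to M$ and smoothness of $M$ suffice), whereas the paper's two-citation proof is shorter and recycles the maximal Cohen--Macaulay pullback lemma it uses repeatedly elsewhere. Two cosmetic points: the identification $K(x_\bullet;A)\otimes_A N\cong K(x_\bullet;N)$ holds without any flatness (it is just the definition of the Koszul complex on the images of the $x_i$), and one should dispose of the trivial case $\mathcal Q_p=0$ (or remark that a nonzero maximal Cohen--Macaulay sheaf on the integral scheme $P$ has full support); neither affects the validity of the argument.
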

    \begin{proof}
        For any $x\in M$, the inclusion of the fiber at $x$ under the projection $P\to M$ is a flat base-change of the inclusion $x\hookrightarrow M$, which has finite tor-dimension as $M$ is smooth.
        The result then follows from \cite[Lemma 2.3]{arinkin} and \cite[Lemma 3.31]{HuyFM}.
    \end{proof}
    \section{Theorem of the square}\label{ss:tots}
    We will be interested in the case where $X=Y$ and $\mathcal P$ is the kernel in Theorem \ref{surfacegroupscheme}, normalized so that $\Phi_{\mathcal P}(s_\ast\mathcal O)=\mathcal O$.
    Write $\mathcal P^{\operatorname{BKR}}_n$ for the kernel as in Corollary \ref{BKR_finalform}.
    Note that since $\mathcal P^\top=\mathcal P$, we have $(\mathcal P_n^{\rm BKR})^\top=\mathcal P_n^{\rm BKR}$ by Lemma \ref{transpose}.

    Denote by $\mathcal P^{\rm BKR,\circ}_n$ the restriction $\mathcal P^{\rm BKR}_n|_{X^{[n],\pi}\times_{C^{[n]}} X^{[n]}}$.
    Since $X^{[n],\pi}\to C^{[n]}$ is a group scheme, it is smooth.
    Hence $\mathcal P^{\rm BKR,\circ}_n$ is locally free.
    In fact, it has rank $1$:
    \begin{proposition}\label{section_BKR}
        The restriction of $\mathcal P^{\rm BKR}_n$ to $s^{[n]}(C^{[n]})\times_{C^{[n]}}X^{[n]}$ is the trivial line bundle.
    \end{proposition}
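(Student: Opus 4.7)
The plan is to reduce the claim to the identity $\mathcal F_n^{\rm BKR}(\sigma_\ast \mathcal O_{C^{[n]}}) = \mathcal O_{X^{[n]}}$, where $\sigma = s^{[n]}$. The second projection $\sigma(C^{[n]}) \times_{C^{[n]}} X^{[n]} \to X^{[n]}$ is an isomorphism because $\sigma$ is a section of $\pi^{[n]}$, so writing $\iota$ for the inclusion $\sigma(C^{[n]}) \times_{C^{[n]}} X^{[n]} \hookrightarrow X^{[n]} \times_{C^{[n]}} X^{[n]}$, a projection-formula argument (leveraging the flatness of $\mathcal P_n^{\rm BKR}$ over the first factor established at the end of Section \ref{ss:BKR}) will identify $\iota^\ast \mathcal P_n^{\rm BKR}$, viewed as a sheaf on $X^{[n]}$, with $\mathcal F_n^{\rm BKR}(\sigma_\ast \mathcal O_{C^{[n]}})$.

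To evaluate this transform I will push $\sigma_\ast \mathcal O_{C^{[n]}}$ through the three-step composition $\operatorname{BKR}^\top \circ \mathcal F^{\boxtimes n} \circ \operatorname{BKR}$. The key geometric input in the first step is that the scheme-theoretic preimage of $\sigma(C^{[n]})$ under $q_X:[\tilde X_n/S_n]\to X^{[n]}$ is $[s(C)^n/S_n]$, obtained by applying Remark \ref{BKRcurves} to the smooth curve $s(C) \subset X$. Flat base change along the finite flat morphism $q_X$, together with the fact that $p_X$ restricts to the natural closed immersion $[s(C)^n/S_n] \hookrightarrow [X^n/S_n]$ (since the isospectral scheme for a smooth curve is itself), then produces $\operatorname{BKR}(\sigma_\ast \mathcal O_{C^{[n]}}) = (s^n)_\ast \mathcal O_{C^n}$ with its natural permutation $S_n$-linearization.

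Next, $(s^n)_\ast \mathcal O_{C^n}$ is the outer tensor $(s_\ast \mathcal O_C)^{\boxtimes n}$ equipped with the permutation linearization, and by Remark \ref{powerfm} combined with the normalization $\mathcal F(s_\ast \mathcal O_C) = \mathcal O_X$, applying $\mathcal F^{\boxtimes n}$ yields $\mathcal O_{X^n}$ with the trivial $S_n$-linearization. For the last step, $\operatorname{BKR}^\top(\mathcal O_{X^n}) = q_{X,\ast} \mathbf L p_X^\ast \mathcal O_{X^n} = q_{X,\ast} \mathcal O_{[\tilde X_n/S_n]} = \mathcal O_{X^{[n]}}$, using that $q_X$ is a finite flat $S_n$-quotient. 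Composing these three identifications yields $\mathcal F_n^{\rm BKR}(\sigma_\ast \mathcal O_{C^{[n]}}) = \mathcal O_{X^{[n]}}$, as required.

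The main obstacle I anticipate is the Cartesian-square claim in the first step: one must verify that $\sigma(C^{[n]}) \times_{X^{[n]}} \tilde X_n \cong s(C)^n$ scheme-theoretically, not merely set-theoretically. This should follow from a local analysis near the section using Haiman's description of $\tilde X_n$, since $s(C)$ is a smooth divisor whose $n$-tuples are generically separated; but it is the one place where the fine structure of the isospectral Hilbert scheme genuinely enters.
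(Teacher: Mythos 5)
Your proposal is correct and follows essentially the same route as the paper: reduce to showing $\operatorname{BKR}^\top\circ\mathcal F^{\boxtimes n}\circ\operatorname{BKR}$ sends $\mathcal O_{s^{[n]}(C^{[n]})}$ to $\mathcal O_{X^{[n]}}$, identify $\operatorname{BKR}(\mathcal O_{s^{[n]}(C^{[n]})})$ with $(s^n)_\ast\mathcal O_{C^n}$ carrying the permutation linearization (the paper cites Remark \ref{BKRcurves} for exactly the scheme-theoretic preimage point you flag), evaluate $\mathcal F^{\boxtimes n}$ on the outer tensor using the normalization $\Phi_{\mathcal P}(s_\ast\mathcal O)=\mathcal O$, and push through $\operatorname{BKR}^\top$. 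The only difference is presentational: the paper spells out the middle step as an explicit term-by-term computation precisely to keep track of the $S_n$-linearizations, which you assert via Remark \ref{powerfm}.
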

    \begin{proof}
        To simplify notation, we write $\mathcal P$ in place of its pushforward to $X^n\times X^n$ during this proof.

        It suffices to show that $\operatorname{BKR}^\top\circ\mathcal F^{\boxtimes n}\circ\operatorname{BKR}$ sends $\mathcal O_{s^{[n]}(C^{[n]})}$ to $\mathcal O_{X^{[n]}}$.
    
        By Remark \ref{BKRcurves}, we have $\operatorname{BKR}(\mathcal O_{s^{[n]}(C^{[n]})})=\mathcal O_{s^n(C^n)}$ where $\mathcal O_{s^n(C^n)}=(s^n)_\ast\mathcal O_{C^n}$ is linearized with the $S_n$-action on $C^n$ via the $S_n$-equivariant closed immersion $s^n:C^n\hookrightarrow X^n$.

        Lemma \ref{closed_embeddings_outer}, on the other hand, tells us that $(s^n)_\ast\mathcal O\cong s_\ast\mathcal O\boxtimes^{\mathbf L}\cdots\boxtimes^{\mathbf L} s_\ast\mathcal O$.
        This is an isomorphism of $S_n$-linearized sheaves if we linearize the right-hand side by permuting the tensor factors.

        Let $\operatorname{pr}_1^\Delta,\operatorname{pr}_2^\Delta:[(X^n\times X^n)/(S_n)_\Delta]\to [X^n/S_n]$ be the projections as in Remark \ref{powerfm}.
        Then
        \[
        \mathcal P^{\boxtimes n }\otimes^{\mathbf L}(\operatorname{pr}_1^\Delta)^\ast(s_\ast\mathcal O)^{\boxtimes n}=\bigotimes_{i=1}^n{}^{\mathbf L}(q_i^\ast\mathcal P\otimes^{\mathbf L}\operatorname{pr}_1^\ast p_i^\ast s_\ast\mathcal O)
        \]
        where $p_i:X^n\to X,q_i:X^n\times X^n\to X\times X$ are the projections used to build the respective outer tensors.
        Note that this is an $(S_n)_\Delta$-equivariant isomorphism with the right hand side linearized by permuting the tensor factors.

        Now $p_i\circ\operatorname{pr}_1=r_1\circ q_i$ where $r_i:X\times X\to X$.
        Our normalization $\Phi_{\mathcal P}(s_\ast\mathcal O)=\mathcal O$ means that $\mathcal P\otimes^{\mathbf L}r_1^\ast s_\ast\mathcal O=\mathcal O_{s(C)\times_CX}$.
        Using the same argument as before, we also have $q_1^\ast\mathcal O_{s(C)\times_CX}\otimes^{\mathbf L}\cdots\otimes^{\mathbf L}q_n^\ast\mathcal O_{s(C)\times_CX}\cong \mathcal O_{s^n(C^n)\times_{C^n}X^n}$ as $(S_n)_\Delta$-sheaves.

        Hence $\mathcal F^{\boxtimes n}((s^n)_\ast\mathcal O_{C^n})=\mathcal O_{[X^n/S_n]}$ (i.e.~$\mathcal O_{X^n}$ with the natural $S_n$-action).
        Now we apply $\operatorname{BKR}^\top$.
        The pullback of $\mathcal O_{[X^n/S_n]}$ to $\tilde{X}_n$ is just $\mathcal O_{[\tilde{X}_n/S_n]}$, and its pushforward to the quotient, namely $X^{[n]}$, is then $\mathcal O_{X^{[n]}}$.
    \end{proof}
    \begin{remark}
        Here, and in later parts, evaluating the image of an outer tensor under $\mathcal F^{\boxtimes n}$ (and analogues thereof) follows from well-known calculations (e.g.~\cite[Exercise 5.13]{HuyFM}) if we ignore the linearization.
        An explicit computation is shown here to keep track of the respective linearizations.
    \end{remark}
    \begin{proposition}[Theorem of the square]\label{tots}
        The identity
        \[(\mu\times\operatorname{id})^\ast\mathcal P^{\rm BKR}_n=\operatorname{pr}_{13}^\ast\mathcal P^{\rm BKR,\circ}_n\otimes\operatorname{pr}_{23}^\ast\mathcal P^{\rm BKR}_n\]
        holds on $X^{[n],\pi}\times_{C^{[n]}}X^{[n]}\times_{C^{[n]}}X^{[n]}$.
    \end{proposition}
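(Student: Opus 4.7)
The plan is to leverage the maximal Cohen--Macaulay property of $\mathcal P_n^{\rm BKR}$ to reduce the identity to a verification on a carefully chosen open subset $U\subset T := X^{[n],\pi}\times_{C^{[n]}}X^{[n]}\times_{C^{[n]}}X^{[n]}$ with complement of codimension at least $2$. First I would verify that both sides of the identity are MCM sheaves on $T$. The RHS is the tensor of the line bundle $\operatorname{pr}_{13}^\ast\mathcal P^{\rm BKR,\circ}_n$ with $\operatorname{pr}_{23}^\ast\mathcal P_n^{\rm BKR}$, the latter being a flat pullback of an MCM sheaf and hence MCM. For the LHS, the group scheme action $\mu$ is smooth (the shear $(g,m)\mapsto (g, g\cdot m)$ is an automorphism of $X^{[n],\pi}\times_{C^{[n]}} X^{[n]}$, so $\mu$ is a smooth projection composed with an isomorphism), whence $\mu\times\operatorname{id}$ is smooth and the pullback of $\mathcal P_n^{\rm BKR}$ along it is MCM.

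Next I would take $U = U_1\cup U_2$, where $U_1 = X^{[n],\pi}\times_{C^{[n]}} X^{[n],\pi}\times_{C^{[n]}} X^{[n]}$ and $U_2 = X^{[n],\pi}\times_{C^{[n]}} X^{[n]}\times_{C^{[n]}} X^{[n],\pi}$. Since $X^{[n]}\setminus X^{[n],\pi}$ is a Cartier divisor, requiring both the second and third factors to lie in this divisor cuts out a codimension-$2$ subscheme of $T$, so the complement $T\setminus U$ has codimension $2$. On $U_1$, the image $\mu(Z_1,Z_2)\in X^{[n],\pi}$ whenever $Z_1,Z_2\in X^{[n],\pi}$ (group closure), so the LHS pulls back the line bundle $\mathcal P_n^{\rm BKR,\circ}$ and both sides are line bundles. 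On $U_2$, the transpose invariance $(\mathcal P_n^{\rm BKR})^\top=\mathcal P_n^{\rm BKR}$ implies that $\mathcal P_n^{\rm BKR}|_{X^{[n]}\times_{C^{[n]}} X^{[n],\pi}}$ is also a line bundle, so again both sides are line bundles.

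With everything reduced to line bundles on $U$, I would set $L$ to be the ratio of LHS and RHS and show $L\cong\mathcal O_U$ by a see-saw/biextension argument. The line bundle $L$ trivializes canonically when any one of the three factors is restricted to the identity section $s^{[n]}(C^{[n]})$: for the second factor using $\mu(-,s^{[n]})=\operatorname{id}$ and $\mathcal P_n^{\rm BKR}|_{\cdot,s^{[n]}}\cong\mathcal O$ via the transpose of Proposition~\ref{section_BKR}, for the third factor directly by Proposition~\ref{section_BKR}, and for the first factor using $\mu(s^{[n]},-)=\operatorname{id}$ together with $\mathcal P^{\rm BKR,\circ}_n|_{s^{[n]},\cdot}\cong\mathcal O$. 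Over the Zariski-dense open $C^{[n],\rm sm}\subset C^{[n]}$ of reduced $W$ with $X_W$ smooth, $X^{[n],\pi}$ restricts to an abelian scheme, and the classical theorem of the square combined with these trivializations yields $L\cong\mathcal O$ there.

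The main obstacle is extending this triviality from the preimage of $C^{[n],\rm sm}$ to all of $U_1\cup U_2$, since its complement has codimension $1$ rather than $2$. I would approach this using the $\delta$-regular structure of $X^{[n],\pi}\to C^{[n]}$ (Remark~\ref{deltareg}), in which the fibers are iterated $\mathbb G_a$-extensions of abelian varieties; the see-saw computation can then be propagated through the $\mathbb G_a$-filtration inductively, reducing each step to the classical abelian-variety case. Once $L\cong\mathcal O$ is established on both $U_1$ and $U_2$, the trivializations agree on the overlap $U_1\cap U_2$ (both coming from the canonical see-saw trivialization along the identity sections), glue to a global isomorphism on $U$, and finally extend uniquely to the claimed isomorphism on $T$ by the MCM property.
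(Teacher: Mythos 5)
There is a genuine gap, and it is exactly the difficulty this paper was designed to get around. Your codimension count for $U_1\cup U_2$ is wrong. It is true that $B:=X^{[n]}\setminus X^{[n],\pi}$ has codimension $1$ in $X^{[n]}$, but the two conditions ``second factor in $B$'' and ``third factor in $B$'' are far from independent over $C^{[n]}$: over the discriminant divisor $\Delta\subset C^{[n]}$ of non-reduced $W$, the bad locus $B_W$ contains components of full fiber dimension $n$. For instance, if $W=2p+p_3+\cdots+p_n$, then any $Z$ whose piece over $2p$ consists of two distinct reduced points of $X_p$ lies in $B_W$, and these form an $n$-dimensional family, i.e.\ a whole component of $X^{[n]}_W$. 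Consequently $X^{[n],\pi}\times_{C^{[n]}}B\times_{C^{[n]}}B$ has a component of dimension $(n-1)+3n=4n-1$, so $T\setminus(U_1\cup U_2)$ has codimension $1$, not $2$, and the maximal Cohen--Macaulay extension step does not apply. (This is what the introduction means by ``the complement of the smooth locus of this relative product has codimension $1$, so a more complicated open was used.'') The paper's remedy is to shrink the base to the open $U\subset C^{[n]}$ of cycles with at most one double point and smooth fibers there, and to enlarge the middle factor to $X^{[n],\pi,\sharp}_U$, which contains precisely those ``bad'' subschemes (e.g.\ two reduced points on one smooth fiber) that you have excluded; only then is the complement of $X^{[n],\pi}_U\times_U X^{[n],\pi,\sharp}_U\times_U X^{[n]}_U$ of codimension $2$. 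On that locus the kernel is no longer a line bundle in the middle variable, so the purely line-bundle see-saw/cube formalism you rely on is unavailable; the paper instead reduces to fibers via a see-saw principle for \emph{simple} sheaves (Proposition~\ref{seesaw}) and then computes $\Phi_{\mathcal P_n^{\rm BKR}}(k(Z))$ explicitly through $S_n$-clusters, a reduction to $n=2$, and the transforms $\mathcal F^{\boxtimes n}$ and $\operatorname{BKR}^\top$.

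A second, related problem is the step where you ``propagate through the $\mathbb G_a$-filtration inductively.'' The classical theorems of the square and cube require properness, which fails for the affine parts of $X^{[n],\pi}_W$ over degenerate $W$, and nothing in your sketch connects the kernel $\mathcal P_n^{\rm BKR}$ (built from BKR and Ploog's induced kernel) with the group law $\mu$ (built from Fourier--Mukai transforms of ideal sheaves on the surface) beyond the trivializations along the identity sections; those trivializations alone do not determine the restriction of either side over the non-reduced or singular-fiber locus. The actual compatibility over that locus is the substantive content of the theorem, and in the paper it is established by the fiberwise identification $\Phi_{\mathcal P_n^{\rm BKR}}(k(\mu(Z,Z')))=\Phi_{\mathcal P_n^{\rm BKR}}(k(Z))\otimes^{\rm und}\Phi_{\mathcal P_n^{\rm BKR}}(k(Z'))$ (Corollary~\ref{fibercheck_tots} and the computations following it), not by a formal biextension argument. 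Your trivializations along the sections and the use of the classical statement over the locus of reduced $W$ with smooth fibers are fine as far as they go, but they cannot close either gap.
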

    Both sides are maximal Cohen-Macaulay, therefore it suffices to show this on an open set whose complement has codimension at least $2$.
    Let us now construct it.

    Consider the open set $U\subset C^{[n]}$ consisting of $n_1p_1+\cdots+n_rp_r$ such that either
    \begin{enumerate}
        \item $n_k=1$ for all $k$, or
        \item there is exactly one $k$ such that $n_k=2$, and $X_{p_i}$ is smooth for all $i$.
    \end{enumerate}
    Note that $C^{[n]}\setminus U$ has codimension $2$.

    Let $X^{[n],\pi,\sharp}_U\subset X_U^{[n]}$ be the subset that consists of finite subschemes that are either reduced, or the (necessarily unique) nonreduced component is transversal to the (necessarily smooth) fiber its reduction belongs to.

    Note that $X_U^{[n]}\setminus X_U^{[n],\pi,\sharp}$ consists of finite subschemes of the form $Z\sqcup Z_3\sqcup\cdots\sqcup Z_n$, with $Z_i$ reduced for all $i=3,\ldots,n$, and $Z$ a non-reduced length $2$ subscheme of $X_{\pi(Z_{\rm red})}$ (which is smooth by definition of $U$).
    This is closed and has dimension $2(n-2)+2=2n-2$, i.e.~codimension $2$ in $X_U^{[n]}$.

    We shall show the identity in Proposition \ref{tots} after restricting to
    \[X^{[n],\pi}_U\times_UX^{[n],\pi,\sharp}_U\times_UX^{[n]}_U\]
    whose complement in $X^{[n],\pi}\times_{C^{[n]}}X^{[n]}\times_{C^{[n]}}X^{[n]}$ has codimension $2$, as discussed.

    Our next claim is that it suffices to show that the restrictions of the two sides to the fibers over any closed point $(Z,Z')\in X^{[n],\pi}_U\times_UX^{[n],\pi,\sharp}_U$ on the first two factors coincide.
    To do this, we make use of the following generalization of the classical seesaw principle to simple sheaves.
    \begin{proposition}[Seesaw principle for simple sheaves]\label{seesaw}
        Let $T\to B$ be a morphism between reduced varieties, and let $M\to B$ be projective and flat.
        Suppose we have coherent sheaves $\mathcal F,\mathcal G$ on $M\times_BT$ flat over $T$ such that, for all $k$-points $t$ of $T$,
        \begin{enumerate}
            \item $\mathcal F_t$ is simple, and
            \item there exists an isomorphism $\mathcal F_t\cong\mathcal G_t$.
        \end{enumerate}
        Then $\mathcal F,\mathcal G$ are in the same $\operatorname{Pic}(T)$-orbit.
    \end{proposition}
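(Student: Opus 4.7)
The plan is to produce, following Mukai's seesaw argument for simple sheaves, a line bundle $\mathcal{L}$ on $T$ and an isomorphism $\operatorname{pr}_T^\ast\mathcal{L}\otimes\mathcal{F}\cong\mathcal{G}$.

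Write $p:M\times_BT\to T$ for the projection. I would consider $\mathcal{L}:=p_\ast\mathcal{H}om(\mathcal{F},\mathcal{G})$ together with the adjunction evaluation $\alpha:p^\ast\mathcal{L}\otimes\mathcal{F}\to\mathcal{G}$, and show (a) $\mathcal{L}$ is a line bundle on $T$, and (b) $\alpha$ is an isomorphism. The crucial input is Schur's lemma: since $\mathcal{F}_t$ is simple and abstractly isomorphic to $\mathcal{G}_t$, any nonzero morphism $\mathcal{F}_t\to\mathcal{G}_t$ is itself an isomorphism, so $\operatorname{Hom}_{M_t}(\mathcal{F}_t,\mathcal{G}_t)$ is one-dimensional for every $t$.

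For (a), I would apply cohomology and base change to $\mathbf{R}p_\ast\mathbf{R}\mathcal{H}om(\mathcal{F},\mathcal{G})$. Using $T$-flatness of $\mathcal{F}$ and $\mathcal{G}$, its derived fiber at $t$ is identified with $\mathbf{R}\operatorname{Hom}_{M_t}(\mathcal{F}_t,\mathcal{G}_t)$, whose $H^0$ has constant dimension one. Upper semicontinuity combined with reducedness of $T$ then forces $\mathcal{L}=H^0\mathbf{R}p_\ast\mathbf{R}\mathcal{H}om(\mathcal{F},\mathcal{G})$ to be locally free of rank one, and its formation to commute with base change on $T$.

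For (b), the fiber of $\alpha$ at $t$ is the evaluation map $\operatorname{Hom}(\mathcal{F}_t,\mathcal{G}_t)\otimes_{k(t)}\mathcal{F}_t\to\mathcal{G}_t$ sending a generator $\phi\otimes x$ to $\phi(x)$, where $\phi$ is an isomorphism; hence $\alpha_t$ is an isomorphism for every $t$. A Nakayama argument using properness of $p$ shows the cokernel of $\alpha$, which vanishes on every fiber, is zero; and $T$-flatness of source and target together with vanishing of $\operatorname{Tor}_1^{\mathcal{O}_T}(\mathcal{G},k(t))$ forces the kernel to vanish fiberwise and hence globally.

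The main obstacle is the derived base-change identification in step (a): one needs $\mathbf{L}i_t^\ast\mathbf{R}\mathcal{H}om_{M\times_BT}(\mathcal{F},\mathcal{G})\cong\mathbf{R}\mathcal{H}om_{M_t}(\mathcal{F}_t,\mathcal{G}_t)$, which is not automatic from $T$-flatness alone since $\mathbf{R}\mathcal{H}om$ commutes with pullback only under perfectness hypotheses. In the intended applications $\mathcal{F}$ is maximal Cohen--Macaulay on a Gorenstein total space, which lets one justify the identification by replacing $\mathcal{F}$ \'etale-locally by a finite free resolution. Once this technical point is handled, the rest of the argument is a direct application of upper semicontinuity, base change, and Nakayama.
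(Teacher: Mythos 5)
There is a genuine gap at the crux of your step (a). The identification $\mathbf L i_t^\ast\mathbf R\underline{\operatorname{Hom}}(\mathcal F,\mathcal G)\cong\mathbf R\operatorname{Hom}_{M_t}(\mathcal F_t,\mathcal G_t)$ is, as you yourself note, not a consequence of $T$-flatness, and your proposed repair does not work: first, the proposition is stated for arbitrary coherent sheaves flat over $T$ (no Gorenstein or maximal Cohen--Macaulay hypotheses appear), so invoking ``the intended applications'' changes the statement being proved; second, even granting those hypotheses, a maximal Cohen--Macaulay sheaf on a singular Gorenstein variety does \emph{not} admit a finite free resolution, even \'etale-locally --- finite projective dimension together with maximal depth forces local freeness by Auslander--Buchsbaum --- so the claimed perfectness is simply false. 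Thus the key assertion of your argument, that $p_\ast\underline{\operatorname{Hom}}(\mathcal F,\mathcal G)$ is a line bundle whose formation commutes with base change, is not established. (If one wants to pursue your algebraic route, the correct tool is not derived $\operatorname{Hom}$ base change but the classical corepresentability of the relative Hom functor --- EGA III 7.7.8--7.7.9, or Altman--Kleiman --- which needs only $\mathcal G$ flat over $T$ and produces a coherent sheaf $\mathcal H$ on $T$ with $\operatorname{Hom}(\mathcal F_t,\mathcal G_t)\cong\operatorname{Hom}(\mathcal H\otimes k(t),k(t))$; constancy of the fiber dimension plus reducedness of $T$ then gives that $\mathcal H$ is invertible. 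Your proposal does not invoke this, and it would need to be checked carefully, since the paper explicitly states it is not aware of an algebraic justification of the line-bundle claim.)

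It is also worth noting that the paper's own proof takes an entirely different route precisely to avoid this point: it passes to the analytic category, uses that the sheafified moduli functor of simple sheaves $\operatorname{Spl}^+_{M^{\rm an}/B^{\rm an}}$ is representable by an analytic space \cite{analyticsimplesheaves} whose diagonal is an immersion, concludes that the classifying maps of $\mathcal F^{\rm an}$ and $\mathcal G^{\rm an}$ coincide (they agree on closed points), hence that the two sheaves lie in one $\operatorname{Pic}(T^{\rm an})$-orbit, and only then deduces that $p_\ast\underline{\operatorname{Hom}}(\mathcal F^{\rm an},\mathcal G^{\rm an})$ is a line bundle and descends this to the algebraic category by GAGA. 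Your plan is the ``direct'' seesaw argument that the paper's remark after the proof deliberately sidesteps (compare also the discussion of \cite[Lemma 5.5]{mrv2} there); as written, with the flawed perfectness step, it does not yet constitute a proof.
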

    \begin{proof}
        Let us first analyze the analytifications $\mathcal F^{\rm an}$, $\mathcal G^{\rm an}$.

        Consider the functor $\operatorname{Spl}_{M^{\rm an}/B^{\rm an}}$ that sends an analytic space $T'$ over $B^{\rm an}$ to the set of $\operatorname{Pic}(T')$-orbits of coherent sheaves on $M^{\rm an}\times_{B^{\rm an}}T'$, flat over $T'$, whose restrictions to the fibers are simple.

        Write $\operatorname{Spl}_{M^{\rm an}/B^{\rm an}}^+$ for its sheafification.
        As noted in \cite[(6.7)]{analyticsimplesheaves}, the presheaf $\operatorname{Spl}_{M^{\rm an}/B^{\rm an}}$ satisfies the separation axiom of a sheaf, so the sheafification map induces an inclusion $\operatorname{Spl}_{M^{\rm an}/B^{\rm an}}\hookrightarrow \operatorname{Spl}_{M^{\rm an}/B^{\rm an}}^+$.

        We claim that the maps $\eta_{\mathcal F^{\rm an}},\eta_{\mathcal G^{\rm an}}:T^{\rm an}\to\operatorname{Spl}_{M^{\rm an}/B^{\rm an}}^+$ defined by $\mathcal F^{\rm an},\mathcal G^{\rm an}$ coincide.
        Indeed, by \cite[(6.4)]{analyticsimplesheaves}, $\operatorname{Spl}_{M^{\rm an}/B^{\rm an}}^+$ is representable by an analytic space.
        As the diagonal of an analytic space is always an immersion, and $\eta_{\mathcal F^{\rm an}},\eta_{\mathcal G^{\rm an}}$ agree on closed points by assumption, they would have to be equal.

        Therefore $\mathcal F^{\rm an},\mathcal G^{\rm an}$ are in the same $\operatorname{Pic}(T^{\rm an})$-orbit.
        Suppose $\mathcal G^{\rm an}=\mathcal F^{\rm an}\otimes p^\ast L$ where $p$ is (the analytification of) the projection to $T$ and $L\in\operatorname{Pic}(T^{\rm an})$.
        Then $\underline{\operatorname{Hom}}(\mathcal F^{\rm an},\mathcal G^{\rm an})=\underline{\operatorname{Hom}}(\mathcal F^{\rm an},\mathcal F^{\rm an})\otimes p^\ast L$.
        This means that $p_\ast\underline{\operatorname{Hom}}(\mathcal F^{\rm an},\mathcal G^{\rm an})=L$ and the natural morphism
        \[\mathcal F^{\rm an}\otimes p^\ast p_\ast\underline{\operatorname{Hom}}(\mathcal F^{\rm an},\mathcal G^{\rm an})\to\mathcal G^{\rm an}\]
        is an isomorphism.

        Now let us consider the algebraic situation.
        We have by \cite[Proposition 21]{gaga} and \cite[Theorem 7.1]{goodgaga} that
        \[(p_\ast\underline{\operatorname{Hom}}(\mathcal F,\mathcal G))^{\rm an}=p_\ast\underline{\operatorname{Hom}}(\mathcal F^{\rm an},\mathcal G^{\rm an})\]
        is a line bundle.
        This means that $p_\ast\underline{\operatorname{Hom}}(\mathcal F,\mathcal G)$ must too be a line bundle, and the map
        \[\mathcal F\otimes p^\ast p_\ast\underline{\operatorname{Hom}}(\mathcal F,\mathcal G)\to\mathcal G\]
        is an isomorphism since its analytification is.
    \end{proof}
    \begin{remark}
        One could obtain an algebraic proof if the moduli space considered in \cite[Theorem 7.4]{altklei} were locally separated, i.e.~the diagonal is an immersion.
        We are unable to find a reference for this, so our argument used instead the analytic moduli space in \cite{analyticsimplesheaves}.

        A version of this seesaw principle has been proven in \cite[Lemma 5.5]{mrv2}.
        Like in our argument, their proof utilized the fact that $p_\ast\underline{\operatorname{Hom}}(\mathcal F,\mathcal G)$ is a line bundle.
        We are not aware of an algebraic way to justify this.
    \end{remark}
    \begin{corollary}\label{fibercheck_tots}
        Suppose that for any $(Z,Z')\in X_U^{[n],\pi}\times_UX_U^{[n],\pi,\sharp}$, we have
        \[
        \Phi_{\mathcal P_n^{\rm BKR}}(k(\mu(Z,Z')))=\Phi_{\mathcal P_n^{\rm BKR}}(k(Z))\otimes^{\rm und}\Phi_{\mathcal P_n^{\rm BKR}}(k(Z')).
        \]
        Then Proposition \ref{tots} is true.
    \end{corollary}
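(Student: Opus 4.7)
The approach is to apply the seesaw principle for simple sheaves (Proposition~\ref{seesaw}) with base $T=X^{[n],\pi}_U\times_U X^{[n],\pi,\sharp}_U$, $B=U$, and $M=X^{[n]}_U$. Both sides of the desired identity are coherent sheaves on $T\times_B M$. I would first verify the hypotheses. Flatness over $T$ holds: for the RHS because $\mathcal P_n^{\rm BKR,\circ}$ is a line bundle and $\mathcal P_n^{\rm BKR}$ is flat over each of its two factors (the lemma at the end of Section~\ref{ss:BKR}); for the LHS the same flatness in the first factor, applied after base-change along $\mu:T\to X^{[n]}_U$, yields flatness of $(\mu\times\operatorname{id})^*\mathcal P_n^{\rm BKR}$ over $T$. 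At every closed point $(Z,Z')\in T$, the LHS fiber on $M$ is $\Phi_{\mathcal P_n^{\rm BKR}}(k(\mu(Z,Z')))$, which is simple because $\operatorname{BKR}^\top\circ\mathcal F^{\boxtimes n}\circ\operatorname{BKR}$ is an exact equivalence; by the hypothesis of the corollary the RHS fiber is isomorphic to it. Proposition~\ref{seesaw} then produces an isomorphism $\text{LHS}\cong\text{RHS}\otimes p^*L$ for some $L\in\operatorname{Pic}(T)$, where $p:T\times_B M\to T$.

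Next I would eliminate $L$ by testing against two identity-sections. Because $s(C)$ meets each fiber of $\pi$ transversally, $s^{[n]}:U\to X^{[n]}$ factors through $X^{[n],\pi,\sharp}_U$; define $\sigma_1:X^{[n],\pi,\sharp}_U\to T$ by $Z'\mapsto(s^{[n]},Z')$ and $\sigma_2:X^{[n],\pi}_U\to T$ by $Z\mapsto(Z,s^{[n]})$. Using $\mu(s^{[n]},Z')=Z'$ together with Proposition~\ref{section_BKR}, both sides restrict along $\sigma_1$ to $\mathcal P_n^{\rm BKR}|_{X^{[n],\pi,\sharp}_U\times_U X^{[n]}_U}$ (on the RHS, $\operatorname{pr}_{13}^*\mathcal P_n^{\rm BKR,\circ}$ restricts to the trivial line bundle), forcing $\sigma_1^*L=\mathcal O$; the symmetric argument using $\mu(Z,s^{[n]})=Z$ yields $\sigma_2^*L=\mathcal O$.

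The hardest step is to upgrade triviality of $L$ along $\sigma_1$ and $\sigma_2$ to triviality of $L$ on all of $T$, which is not a formal consequence: a line bundle on the fiber $T_W=X^{[n],\pi}_W\times X^{[n],\pi,\sharp}_W$ can be trivial along both identity axes without being itself trivial, as in the Poincar\'e-bundle example. A natural attack is to fix $(W,Z')\in X^{[n],\pi,\sharp}_U$, restrict the seesaw identity $\text{LHS}\cong\text{RHS}\otimes p^*L$ to $X^{[n],\pi}_W\times\{Z'\}\times X^{[n]}_W$, and apply Proposition~\ref{seesaw} on this smaller base; combined with the swap-symmetry $(\mathcal P_n^{\rm BKR})^\top=\mathcal P_n^{\rm BKR}$ and Proposition~\ref{section_BKR} at $Z=s^{[n]}(W)$, this should force $L|_{X^{[n],\pi}_W\times\{Z'\}}$ to be trivial for every such $(W,Z')$. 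The classical line-bundle seesaw applied to $\operatorname{pr}_2:T\to X^{[n],\pi,\sharp}_U$, together with the trivialization $\sigma_1^*L=\mathcal O$ on a section of $\operatorname{pr}_2$, then forces $L=\mathcal O_T$. Once the identity is established on $X^{[n],\pi}_U\times_U X^{[n],\pi,\sharp}_U\times_U X^{[n]}_U$, it extends to the full triple product by maximal Cohen-Macaulayness of both sides, since the complement has codimension two.
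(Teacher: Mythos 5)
Your first paragraph and your final sentence match the paper's argument: apply Proposition \ref{seesaw} with $B=U$, $T=X^{[n],\pi}_U\times_UX^{[n],\pi,\sharp}_U$, $M=X^{[n]}_U$, note that the fibers of the left side are simple because they are images of skyscrapers under an equivalence and agree with the fibers of the right side by hypothesis, and extend from the codimension-$2$ open by maximal Cohen--Macaulayness. The problem is your elimination of the residual line bundle $L\in\operatorname{Pic}(T)$. You correctly observe that triviality along the two identity sections $\sigma_1,\sigma_2$ of $T$ is not enough, but your repair has a genuine gap: the ``classical line-bundle seesaw'' applied to $\operatorname{pr}_2:T\to X^{[n],\pi,\sharp}_U$ is not available, because the fibers of $\operatorname{pr}_2$ are the group schemes $X^{[n],\pi}_W$, which are \emph{not proper} whenever $W$ is non-reduced or meets a singular fiber (they have nontrivial affine part, cf.\ Remark \ref{deltareg}); the seesaw theorem needs complete fibers so that cohomology and base change makes the pushforward a line bundle. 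Moreover, the first half of the repair is only asserted (``should force''), and as phrased --- Proposition \ref{section_BKR} ``at $Z=s^{[n]}(W)$'', i.e.\ a restriction in the \emph{first} coordinate --- it only pins down the fiber of $L$ at a single point of $X^{[n],\pi}_W\times\{Z'\}$, which says nothing about triviality of $L$ on that slice.

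The fix is much simpler, and is what the paper does: kill $L$ by inserting the section into the \emph{third} ($M$) factor, globally over $T$. The locus $X^{[n],\pi}_U\times_UX^{[n],\pi,\sharp}_U\times_Us^{[n]}(U)$ is a section of $p:T\times_UX^{[n]}_U\to T$; the map $\mu\times\operatorname{id}$ (resp.\ $\operatorname{pr}_{13}$, $\operatorname{pr}_{23}$) carries it into $X^{[n]}\times_{C^{[n]}}s^{[n]}(C^{[n]})$, where $\mathcal P^{\rm BKR}_n$ is the trivial line bundle by Proposition \ref{section_BKR} together with $(\mathcal P^{\rm BKR}_n)^\top=\mathcal P^{\rm BKR}_n$. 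Hence both sides restrict to $\mathcal O_T$ on this section, and restricting the seesaw isomorphism ${\rm LHS}\cong{\rm RHS}\otimes p^\ast L$ there gives $L\cong\mathcal O_T$ directly --- no second application of any seesaw, fiberwise or classical, is needed. You already have all the ingredients for this (swap-symmetry and Proposition \ref{section_BKR}); they just need to be used on the $M$-direction section once, rather than fiberwise followed by an inapplicable descent argument.
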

    \begin{proof}
        We apply the preceding proposition to $B=U$, $T=X_U^{[n],\pi}\times_UX_U^{[n],\pi,\sharp}$, and $M=X_U^{[n]}$.
        The two coherent sheaves we compare will be the two sides of Proposition \ref{tots} restricted to $X^{[n],\pi}_U\times_UX^{[n],\pi,\sharp}_U\times_UX^{[n]}_U$.

        Let $W=\pi^{[n]}(Z)=\pi^{[n]}(Z')$.
        Then the fiber of $(\mu\times \operatorname{id})^\ast\mathcal P_n^{\rm BKR}$ at $(Z,Z')$ is exactly the fiber of $\mathcal P_n^{\rm BKR}$ at $\mu(Z,Z')$.
        This is a coherent sheaf on $X^{[n]}_{W}$ whose pushforward to $X^{[n]} $ is exactly $\Phi_{\mathcal P_n^{\rm BKR}}(k(\mu(Z,Z')))$.
        Note also that it is simple as it is the image of a skyscraper sheaf under an equivalence.

        On the other hand, the fiber of $\operatorname{pr}_{13}^\ast\mathcal P^{\rm BKR,\circ}_n\otimes\operatorname{pr}_{23}^\ast\mathcal P^{\rm BKR}_n$ at $(Z,Z')$ is the tensor product of the fibers of $\mathcal P_n^{\rm BKR}$ at $Z$ and $Z'$, both sheaves on $X_W^{[n]}$.
        The pushforward of this to $X^{[n]}$ is then the right-hand side of the equality.

        By the preceding proposition, the two sides differ by a line bundle pulled back from $X^{[n],\pi}_U\times_UX^{[n],\pi,\sharp}_U$.
        Now note that both sides restrict to the structure sheaf on $X^{[n],\pi}_U\times_UX^{[n],\pi,\sharp}_U\times_Us^{[n]}(U)$ by Proposition \ref{section_BKR}.
        The claim then follows from the preceding proposition.
    \end{proof}
    The situation is now reduced to understanding the Fourier-Mukai images of certain skyscraper sheaves under $\mathcal P_n^{\rm BKR}$.

    Let us first analyze their images under $\operatorname{BKR}$.
    From now on, when we write $S_k$ as a subgroup of $S_n$ for $k\le n$, we will always mean the inclusion induced by the inclusion of the letters $\{1,\ldots,k\}\hookrightarrow \{1,\ldots, n\}$.

    By definition, $\operatorname{BKR}(k(Z))$ is the structure sheaf of the $S_n$-cluster $\operatorname{BKR}_Z$ (on the $n$-fold product) corresponding to $Z$.
    \begin{lemma}\label{cluster_projection}
        Suppose $Z$ is a finite subscheme of length $n$ in a smooth projective variety $Y$ of dimension at most $2$.
        Then for any $i$, $\operatorname{BKR}_Z$ is a closed subscheme of $\operatorname{pr}_i^{-1}(Z)$.
    \end{lemma}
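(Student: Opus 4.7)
The plan is to realize $\tilde Y_n$ as a closed subscheme of the $n$-fold fibered product of the universal subscheme $\mathcal Z_n\subset Y^{[n]}\times Y$ over $Y^{[n]}$. Granting this, the fiber of that fibered product over $Z$ equals $Z^{\times n}\subset Y^n$, so $q^{-1}(Z)$ sits inside $Z^{\times n}$; under the identification of $\{Z\}\times Y^n$ with $Y^n$ by the projection $p$, the subscheme $q^{-1}(Z)$ is identified with $\operatorname{BKR}_Z$, yielding $\operatorname{BKR}_Z\subset Z^{\times n}\subset \operatorname{pr}_i^{-1}(Z)$ for each $i$.

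To produce such an embedding, for each $i$ I would construct a morphism $\pi_i:\tilde Y_n\to \mathcal Z_n$ over $Y^{[n]}$ that on $k$-points sends $(Z,(y_1,\dots,y_n))$ to $(Z,y_i)$, and then form their $n$-fold fibered product over $Y^{[n]}$. On $k$-points the target is well-defined because the defining condition of $\tilde Y_n$ forces $\sum y_i=[Z]$ as $0$-cycles in $Y^{(n)}$, so each $y_i\in\operatorname{supp}(Z)$; consequently the composition $\tilde Y_n\to Y^{[n]}\times Y$ through the $i$-th factor has set-theoretic image contained in $|\mathcal Z_n|$. To upgrade this to a scheme-theoretic factorization through $\mathcal Z_n$, I would invoke that both $\tilde Y_n$ and $\mathcal Z_n$ are reduced: the former since $\tilde Y_n$ is normal by \cite{haiman} (trivially when $\dim Y=1$, where $\tilde Y_n\cong Y^n$); the latter because $\mathcal Z_n\cong Y\times Y^{[n-1]}$ is smooth when $Y$ is a curve, while for $Y$ a smooth surface, $\mathcal Z_n$ is flat and Cohen--Macaulay over the smooth base $Y^{[n]}$ and generically reduced, hence reduced. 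A morphism from a reduced scheme into an ambient scheme with set-theoretic image in a reduced closed subscheme factors through that subscheme, which yields $\pi_i$.

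The main subtle point is this last upgrade: turning the $k$-point fact that each $y_i$ lies in $Z$ into a scheme-theoretic factorization through $\mathcal Z_n$. A direct verification that $\operatorname{BKR}_Z\subset\operatorname{pr}_i^{-1}(Z)$ without this device would reduce, already in the curve case, to an identity like $x_i^n\in (e_1(\vec x),\dots,e_n(\vec x))$ in the coordinate ring of $Y^n$, and is correspondingly more combinatorial in the surface case. The appeal to the reducedness of $\tilde Y_n$ and $\mathcal Z_n$ bypasses this explicit computation and treats both dimensions uniformly.
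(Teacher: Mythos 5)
Your argument is correct and is essentially the paper's own proof: both reduce the claim to showing that $\tilde Y_n$ is scheme-theoretically contained in the preimage of the universal subscheme under the $i$-th projection, check this on $k$-points, and upgrade using the reducedness (normality) of $\tilde Y_n$. The only cosmetic differences are your packaging via the $n$-fold fibered product of $\mathcal Z_n$ over $Y^{[n]}$ and your appeal to the reducedness of $\mathcal Z_n$, which is not actually needed, since a map from a reduced scheme with set-theoretic image in a closed subscheme always factors through that subscheme's reduction, hence through the subscheme itself.
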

    \begin{proof}
        It suffices to show that $\tilde{Y}_n$ is a closed subscheme of the preimage of the universal subscheme on $Y\times Y^{[n]}$ via the $i$-th projection $Y^n\times Y^{[n]}\to Y\times Y^{[n]}$.
        The inclusion on $k$-points is clear.
        But $\tilde{Y}_n$ is also reduced, so this is an inclusion of closed subschemes.
    \end{proof}
    \begin{corollary}\label{proj_isom_mult2}
        Suppose $Z$ has length $2$, then each of the projections factors though an isomorphism $\operatorname{BKR}_Z\to Z$.
    \end{corollary}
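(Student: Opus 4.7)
The plan is to use Lemma \ref{cluster_projection} to embed $\operatorname{BKR}_Z$ into $Z\times Z$, and then exploit the regular-representation structure of $\mathcal O_{\operatorname{BKR}_Z}$ to force the projections to be isomorphisms. The key numerical coincidence driving the argument is that $|S_2|=2$ equals the length of $Z$.

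I would first apply Lemma \ref{cluster_projection} with both $i=1$ and $i=2$ to scheme-theoretically embed $\operatorname{BKR}_Z$ into $(Z\times Y)\cap(Y\times Z)=Z\times Z$. Since $\operatorname{BKR}_Z$ and $Z$ are both of length $2$, to show that $\operatorname{pr}_i|_{\operatorname{BKR}_Z}\colon\operatorname{BKR}_Z\to Z$ is an isomorphism it is enough to show the induced algebra map $\mathcal O_Z\to\mathcal O_{\operatorname{BKR}_Z}$ is surjective. This map factors as $\mathcal O_Z\xrightarrow{\iota_i}\mathcal O_Z\otimes_k\mathcal O_Z\xrightarrow{\pi}\mathcal O_{\operatorname{BKR}_Z}$, where $\iota_1(a)=a\otimes 1$, $\iota_2(a)=1\otimes a$, and $\pi$ is the surjection coming from restriction to $\operatorname{BKR}_Z\subset Z\times Z$.

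The crux is that $\mathcal O_{\operatorname{BKR}_Z}$ realizes the regular representation of $S_2$, which has a nontrivial sign component. Decomposing $\mathcal O_Z\otimes_k\mathcal O_Z=\operatorname{Sym}^2\mathcal O_Z\oplus\wedge^2\mathcal O_Z$ under the swap action, the $S_2$-equivariance of $\pi$ together with the presence of a sign component in the target forces $\pi$ to be nonzero on the one-dimensional antiinvariant summand $\wedge^2\mathcal O_Z$ — otherwise the quotient would be purely invariant, contradicting the regular-representation structure. Picking any basis $\{1,\eta\}$ of $\mathcal O_Z$, the space $\wedge^2\mathcal O_Z$ is spanned by $\eta\otimes 1-1\otimes\eta$, so this element maps to a nonzero element of $\mathcal O_{\operatorname{BKR}_Z}$.

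Finally, since $\ker\pi$ is stable under the swap, $\eta\otimes 1\in\ker\pi$ would entail $1\otimes\eta\in\ker\pi$ and hence $\eta\otimes 1-1\otimes\eta\in\ker\pi$, contradicting the previous step. Therefore $\pi(\iota_1(\eta))\neq 0$, which makes $\pi\circ\iota_1$ a nonconstant algebra map between two-dimensional $k$-algebras, hence an isomorphism; the argument for $\iota_2$ is symmetric. The main thing to justify carefully is the initial scheme-theoretic inclusion $\operatorname{BKR}_Z\subset Z\times Z$ obtained from applying the preceding lemma twice; everything downstream is elementary $S_2$-representation theory.
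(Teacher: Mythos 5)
Your setup is sound and genuinely different from the paper's argument: embedding $\operatorname{BKR}_Z$ into $Z\times Z$ via Lemma \ref{cluster_projection} applied to both projections, and using that $\Gamma(\mathcal O_{\operatorname{BKR}_Z})$ is the regular $S_2$-representation to see that the surjection $\pi:\mathcal O_Z\otimes_k\mathcal O_Z\to\mathcal O_{\operatorname{BKR}_Z}$ cannot kill the sign summand spanned by $\eta\otimes 1-1\otimes\eta$, is all correct. The gap is in the last inference: from $\pi(\iota_1(\eta))\neq 0$ you conclude that $\pi\circ\iota_1$ is ``nonconstant, hence an isomorphism,'' but a unital map between two-dimensional algebras can send $\eta$ to a nonzero element of $k\cdot 1$ and still have image $k\cdot 1$. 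Concretely, when $Z$ is reduced one naturally takes $\eta$ to be an idempotent such as $(1,0)\in k\times k$, and the map $k\times k\to k\times k$, $(a,b)\mapsto(a,a)$, sends $\eta$ to $1\neq 0$ without being injective; so ``$\pi(\iota_1(\eta))\neq 0$'' is strictly weaker than what you need. (If $Z$ is non-reduced and you choose $\eta$ nilpotent, the inference does go through, since a nonzero nilpotent cannot lie in $k\cdot 1$; it is exactly the reduced case that breaks.)

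The repair stays entirely inside your framework: show the stronger statement $\pi(\eta\otimes 1)\notin k\cdot 1$. If $\pi(\eta\otimes 1)=c\cdot 1$, then since $c\cdot 1$ is swap-invariant and $\pi$ is $S_2$-equivariant, $\pi(1\otimes\eta)=c\cdot 1$ as well, so $\pi(\eta\otimes 1-1\otimes\eta)=0$, contradicting your sign-component step. Hence $1$ and $\pi(\eta\otimes 1)$ are linearly independent in the two-dimensional algebra $\mathcal O_{\operatorname{BKR}_Z}$, so $\pi\circ\iota_1$ is surjective and therefore an isomorphism, and symmetrically for $\iota_2$. For comparison, the paper's proof avoids representation theory altogether: the scheme-theoretic image of $\operatorname{BKR}_Z$ in $Z$ has length $1$ or $2$, and length $1$ is impossible because $S_2$-invariance would then force $\operatorname{BKR}_Z\subseteq\{x\}\times\{x\}$, of length $1$, contradicting $\operatorname{length}(\operatorname{BKR}_Z)=2$; equality of lengths then gives the isomorphism. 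Your route, once patched as above, is a valid alternative that trades that length count for the regular-representation property of the cluster.
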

    \begin{proof}
        By the preceding lemma, the scheme-theoretic image of $\operatorname{BKR}_Z$ is contained in $Z$, therefore either has length $1$ or $2$.
        In the former case, say the image is $k(x)$, we must have by $S_2$-invariance that the scheme-theoretic image of $Z$ under the second projection is also $k(x)$.
        This however would mean that $Z$ is a subscheme of $k(x)\boxtimes k(x)=k(x,x)$, which is a contradiction as $k(x,x)$ has length $1$.

        Hence the image is exactly $Z$.
        But $\operatorname{BKR}_Z$ has length $2$ as well, so this is an isomorphism.
    \end{proof}
    \begin{lemma}
        Suppose $Z$ is a length $n'\le n$ subscheme of $Y$ and $x_{n'+1},\ldots,x_n$ are distinct points in $Y\setminus Z$.
        Then we have an $S_{n'}$-equivariant isomorphism
        \[\operatorname{BKR}(k(Z))\boxtimes^{\mathbf L} k(x_{n'+1})\boxtimes^{\mathbf L}\cdots\boxtimes^{\mathbf L} k(x_n)\cong\mathcal O_{\operatorname{BKR}_Z\times \{x_{n'+1}\}\times \cdots\times\{x_n\}}.\]
        And $\operatorname{ind}_{S_{n'}}^{S_n}\mathcal O_{\operatorname{BKR}_Z\times \{x_{n'+1}\}\times \cdots\times\{x_n\}}$ is an $S_n$-cluster.
    \end{lemma}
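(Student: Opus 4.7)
The plan is to prove the two assertions in sequence. For the outer tensor identity, I begin by recalling that $\operatorname{BKR}(k(Z))$ is by definition the structure sheaf of the $S_{n'}$-cluster $\operatorname{BKR}_Z \subset Y^{n'}$ with its canonical $S_{n'}$-linearization. Given the closed embeddings $\operatorname{BKR}_Z \hookrightarrow Y^{n'}$ and $\{x_i\} \hookrightarrow Y$, I iterate Lemma \ref{closed_embeddings_outer} to push the outer tensor through these closed embeddings. After all pushforwards are factored out, the remaining derived box product lives on the finite scheme $\operatorname{BKR}_Z \times \{x_{n'+1}\} \times \cdots \times \{x_n\}$ and degenerates to the ordinary pullback of $\mathcal O_{\operatorname{BKR}_Z}$ (each $k(x_i)$ pulls back to the structure sheaf under the projection from a point). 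This yields $\mathcal O_{\operatorname{BKR}_Z \times \{x_{n'+1}\} \times \cdots \times \{x_n\}}$. The naturality of Lemma \ref{closed_embeddings_outer} with respect to the $S_{n'}$-action permuting the first $n'$ factors (and acting trivially on each $\{x_i\}$) supplies the $S_{n'}$-equivariance.

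For the cluster assertion, let $W_0 = \operatorname{BKR}_Z \times \{x_{n'+1}\} \times \cdots \times \{x_n\}$. By the definition of induction,
\[\operatorname{ind}_{S_{n'}}^{S_n} \mathcal O_{W_0} = \bigoplus_{\sigma \in S_n/S_{n'}} \sigma^* \mathcal O_{W_0}.\]
The key geometric observation is that the translates $\sigma(W_0) \subset Y^n$ for distinct cosets $\sigma S_{n'}$ are pairwise disjoint. Indeed, if $\sigma \notin S_{n'}$, there is some $j \in \{n'+1,\ldots,n\}$ with $\sigma^{-1}(j) \le n'$. Points of $W_0$ have $x_j$ in slot $j$, whereas points of $\sigma(W_0)$ have in slot $j$ the $\sigma^{-1}(j)$-th coordinate of a point of $W_0$, which by Lemma \ref{cluster_projection} lies in $\operatorname{supp}(Z)$. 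Since $x_j \notin Z$ by hypothesis, these positions cannot match.

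Consequently $\operatorname{ind}_{S_{n'}}^{S_n} \mathcal O_{W_0}$ is the structure sheaf of the zero-dimensional $S_n$-invariant subscheme $\bigsqcup_\sigma \sigma(W_0) \subset Y^n$, which has length $(n!/n'!) \cdot n'! = n!$. To finish, I verify the regular-representation condition: $H^0(W_0, \mathcal O_{W_0}) \cong H^0(\operatorname{BKR}_Z, \mathcal O_{\operatorname{BKR}_Z})$ is the regular representation $\mathbb C[S_{n'}]$ by the $S_{n'}$-cluster property of $\operatorname{BKR}_Z$, so
\[H^0\bigl(\operatorname{ind}_{S_{n'}}^{S_n} \mathcal O_{W_0}\bigr) = \operatorname{ind}_{S_{n'}}^{S_n} \mathbb C[S_{n'}] = \mathbb C[S_n]\]
by transitivity of induction from the trivial subgroup. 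The main obstacle is the disjointness step, since it is the only place where honest geometry (as opposed to categorical formalism) enters; it pivots on Lemma \ref{cluster_projection} pinning down the set-theoretic support of $\operatorname{BKR}_Z$ inside $\operatorname{supp}(Z)^{n'}$, after which the rest is bookkeeping.
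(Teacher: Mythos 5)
Your proof is correct in substance and follows essentially the same route as the paper's: the isomorphism is immediate from Lemma \ref{closed_embeddings_outer}, and the cluster claim is verified by identifying the induction with the structure sheaf of a length $n!$ subscheme (disjoint translates) and computing global sections as the induced, hence regular, representation. Two small points of comparison. First, your disjointness case analysis has a slip: for $\sigma\notin S_{n'}$ it need not be true that some $j\in\{n'+1,\dots,n\}$ has $\sigma^{-1}(j)\le n'$ --- $\sigma$ may permute the letters $n'+1,\dots,n$ among themselves while preserving $\{1,\dots,n'\}$; that case is settled not by Lemma \ref{cluster_projection} but by the distinctness of the points $x_{n'+1},\dots,x_n$ (slot $j$ then carries $x_{\sigma^{-1}(j)}\neq x_j$), so disjointness still holds but your stated reduction is incomplete. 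Second, the paper spends its middle paragraph checking that the apparent linearization on $\operatorname{ind}_{S_{n'}}^{S_n}\mathcal O_{W_0}$ coincides with the canonical linearization on the structure sheaf of the invariant subscheme $\mathfrak Z$ (via the $S_n$-equivariance of the surjection $\mathcal O_{Y^n}\to\mathcal O_{\mathfrak Z}$ obtained by adjunction); you assert this identification implicitly, and while it is a routine verification, it is the point that lets one later recognize the induced object as $\operatorname{BKR}(k(Z^+))$, so it should be recorded.
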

    \begin{proof}
        The isomorphism is Lemma \ref{closed_embeddings_outer}.
        Let us show that the induction is an $S_n$-cluster.

        Note first that the induction, after forgetting the linearization, is the structure sheaf of a length $n!$ subscheme $\mathfrak Z$, as it is the direct sum of $n!/(n')!$ structure sheaves of length $(n')!$ subschemes with disjoint supports.
        We thus have a surjection $\mathcal O_{Y^n}\to\mathcal O_{\mathfrak Z}$.

        This surjection is the adjunction to the $S_{n'}$-equivariant surjection $\operatorname{Res}_{S_{n'}}^{S_n}\mathcal O_{Y^n}\to \mathcal O_{\operatorname{BKR}_Z\times \{x_{n'+1}\}\times \cdots\times\{x_n\}}$ extended from $\mathcal O_{Y^{n'}}\to\mathcal O_{\operatorname{BKR}_Z}$.
        In particular, it is $S_n$-equivariant.
        So the linearization on the induction coincides with that on $\mathcal O_{\mathfrak Z}$.

        Since $\Gamma(\mathcal O_{\operatorname{BKR}_Z\times \{x_{n'+1}\}\times \cdots\times\{x_n\}})=\Gamma(\mathcal O_{\operatorname{BKR}_Z})$ is the regular $S_{n'}$-representation, $\Gamma(\mathcal O_{\mathfrak Z})\cong\operatorname{ind}_{S_{n'}}^{S_n}\Gamma(\mathcal O_{\operatorname{BKR}_Z\times \{x_{n'+1}\}\times \cdots\times\{x_n\}})$ is the regular $S_n$-representation.
    \end{proof}
    \begin{corollary}
        Suppose $Z$ is a length $2$ subscheme of $Y$ and $x_3,\ldots,x_n\in Y\setminus Z$ are distinct.
        Write $Z^+=Z\sqcup x_3\sqcup\cdots\sqcup x_n$.
        Then
        \[\operatorname{BKR}(k(Z^+))=\operatorname{ind}_{S_2}^{S_n}(\operatorname{BKR}(k(Z))\boxtimes^{\mathbf L} k(x_3)\boxtimes^{\mathbf L}\cdots\boxtimes^{\mathbf L} k(x_n)).\]
        If furthermore $Z=x_1\sqcup x_2$ is reduced, then $\operatorname{BKR}(k(Z^+))=\operatorname{ind}_{S_1}^{S_n}k(x_1,\ldots,x_n)=\operatorname{ind}_{S_1}^{S_n}(k(x_1)\boxtimes^{\mathbf L}\cdots\boxtimes^{\mathbf L} k(x_n))$.
    \end{corollary}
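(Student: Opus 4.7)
The plan is to identify both sides of the claimed equality as the structure sheaf of a single $S_n$-cluster inside $Y^n$. By definition, $\operatorname{BKR}(k(Z^+)) = \mathbf R p_\ast q^\ast k(Z^+)$ is the $S_n$-equivariant structure sheaf of the scheme-theoretic fiber $\operatorname{BKR}_{Z^+} := q^{-1}(Z^+) \subset \tilde Y_n \hookrightarrow Y^{[n]} \times Y^n$, pushed forward to $Y^n$. The preceding lemma, applied with $n' = 2$, already establishes that the proposed right-hand side $\operatorname{ind}_{S_2}^{S_n}(\operatorname{BKR}(k(Z)) \boxtimes k(x_3) \boxtimes \cdots \boxtimes k(x_n))$ is itself an $S_n$-cluster, i.e.\ the $S_n$-equivariant structure sheaf of some length-$n!$ closed subscheme $\mathfrak Z \subset Y^n$. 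The task therefore reduces to exhibiting an $S_n$-equivariant identification $\mathfrak Z = \operatorname{BKR}_{Z^+}$.

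To achieve this, I would use that $Z^+$ splits as a disjoint union of its connected components $Z, \{x_3\}, \ldots, \{x_n\}$ with pairwise disjoint supports in $Y$. Étale-locally around $Z^+$ in $Y^{[n]}$, one has a product decomposition $Y^{[2]} \times Y \times \cdots \times Y$, and the isospectral Hilbert scheme is compatible with this decomposition, with the $S_n$-linearization obtained from the smaller symmetry $S_2 \times S_1^{n-2}$ by induction. This identifies $\operatorname{BKR}_{Z^+}$ with $\mathfrak Z$ as $S_n$-equivariant subschemes. Alternatively, one can bypass the decomposition by invoking Lemma \ref{cluster_projection}: each coordinate projection restricted to $\operatorname{BKR}_{Z^+}$ factors through $Z^+$, so $\operatorname{BKR}_{Z^+}$ sits inside the union of $S_n$-translates of products involving $\operatorname{BKR}_Z$ and the $\{x_i\}$, which produces an $S_n$-equivariant closed immersion between the two clusters; since both sides have length $n!$, it is forced to be an isomorphism.

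The main obstacle I anticipate is making the étale-local product decomposition of the isospectral Hilbert scheme rigorous, since $\tilde Y_n$ is not a priori a scheme-theoretic product around $Z^+$; the second approach via Lemma \ref{cluster_projection} seems safer but still requires care in tracking the $S_n$-linearizations under induction. The reduced case $Z = x_1 \sqcup x_2$ is immediate: $\operatorname{BKR}_{Z^+}$ is then the reduced $S_n$-orbit of $(x_1, \ldots, x_n)$ in $Y^n$, which is visibly the $S_n$-induction from the trivial subgroup $S_1$ of the single point $(x_1, \ldots, x_n)$, and Lemma \ref{closed_embeddings_outer} further rewrites this as the outer tensor of the individual $k(x_i)$.
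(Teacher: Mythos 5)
Your reduction to identifying two $S_n$-clusters is the right frame, and your treatment of the reduced case is fine, but the route you call ``safer'' has a genuine gap at the word ``so''. From Lemma \ref{cluster_projection} you only get $\operatorname{BKR}_{Z^+}\subset\operatorname{pr}_i^{-1}(Z^+)$ for every $i$, i.e.\ a scheme-theoretic containment $\operatorname{BKR}_{Z^+}\subset (Z^+)^n$. When $Z$ is non-reduced with support $z$, the component of $(Z^+)^n$ through a point of the orbit of $(z,z,x_3,\ldots,x_n)$ is $Z\times Z\times\{x_3\}\times\cdots\times\{x_n\}$, whose first two factors have length $4$; nothing in the lemma forces the local component of $\operatorname{BKR}_{Z^+}$ (which has length $2$ by equivariance) to land inside the particular length-$2$ subscheme $\operatorname{BKR}_Z\subset Z\times Z$. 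Indeed $Z\times Z$ contains other $S_2$-invariant length-$2$ subschemes (e.g.\ the diagonal copy of $Z$), so the claimed $S_n$-equivariant closed immersion into $\operatorname{ind}_{S_2}^{S_n}(\operatorname{BKR}_Z\times\{x_3\}\times\cdots\times\{x_n\})$ is not produced, and the length count never gets off the ground. Your first route (\'etale-local product decomposition of $\tilde Y_n$ around $Z^+$) would work, but you acknowledge you have not made it rigorous, so the non-reduced case --- which is the entire content of the corollary --- remains open in your write-up.

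The paper closes this by running the containment in the opposite direction. The preceding lemma shows the proposed right-hand side is an $S_n$-cluster, hence (by the identification of $X^{[n]}$ with the $S_n$-Hilbert scheme underlying BKR/Haiman) it equals $\operatorname{BKR}_{Z'}$ for a \emph{unique} $Z'\in Y^{[n]}$. Lemma \ref{cluster_projection} then says its scheme-theoretic image under any projection is contained in $Z'$, while Corollary \ref{proj_isom_mult2} computes that image to be exactly $Z^+$ (each translate projects onto $Z$ or onto some $\{x_j\}$); since $Z'$ and $Z^+$ both have length $n$, $Z'=Z^+$. In other words, one does not try to squeeze $\operatorname{BKR}_{Z^+}$ into the constructed cluster; one identifies which point of the Hilbert scheme the constructed cluster corresponds to. If you adopt that direction, your argument goes through with the tools you already cite.
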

    \begin{proof}
        By the preceding lemma, the claimed form of $\operatorname{BKR}$-images are indeed $S_n$-clusters.
        The only thing left to do is to identify the preimages of these clusters.
        But by Corollary \ref{proj_isom_mult2} the scheme-theoretic image of each of these clusters under any of the projections is exactly $Z^+$, hence by Lemma \ref{cluster_projection} they can only correspond to $Z^+$.
    \end{proof}
    This allows us to, for the type of subschemes we need to consider, reduce the situation to the case $n=2$.
    More precisely, we can utilize the following:
    \begin{lemma}\label{induction_FM}
        Let $M,M'$ be smooth projective with actions by a finite group $G$.
        And let $H$ be a subgroup of $G$.

        Then for any $\mathcal Q\in D^b\operatorname{Coh}([(M\times M')/G_\Delta])$ and $A\in D^b\operatorname{Coh}([M/H])$, we have
        \[\Phi_{\mathcal Q}(\operatorname{ind}_H^GA)= \operatorname{ind}_H^G\Phi_{\operatorname{Res}^{G_\Delta}_{H_\Delta}\mathcal Q}(A).\]
    \end{lemma}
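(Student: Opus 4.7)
The plan is to express both sides via the projections for the quotient stacks involved, and then to reduce to the standard interplay of base change and projection formula.

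First I will set up notation. For the relevant induction and restriction maps, write $\pi_M: [M/H] \to [M/G]$, $\pi_{M'}: [M'/H] \to [M'/G]$, and $\pi_{MM'}: [(M\times M')/H_\Delta] \to [(M\times M')/G_\Delta]$, all finite étale of degree $[G:H]$. For the projections induced by $(m,m') \mapsto m$ and $(m,m') \mapsto m'$, write $p^G, q^G$ in the $G$-case, and $p^H, q^H$ in the $H$-case. Then $\operatorname{ind}_H^G = (\pi_{(-)})_\ast$ and $\operatorname{Res}^{G_\Delta}_{H_\Delta} = \pi_{MM'}^\ast$; pullbacks are exact here, so there is no need to derive them, by the same argument as for $q_\ast$ in Section \ref{ss:BKR}.

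Next I would verify that the square
\[
\begin{tikzcd}
[(M\times M')/H_\Delta] \arrow{r}{p^H} \arrow[swap]{d}{\pi_{MM'}} & [M/H] \arrow{d}{\pi_M} \\
[(M\times M')/G_\Delta] \arrow{r}{p^G} & [M/G]
\end{tikzcd}
\]
is Cartesian, and analogously for $q^H, q^G, \pi_{M'}$. Granting this, since $\pi_M$ is finite flat, base change gives $(p^G)^\ast (\pi_M)_\ast A = (\pi_{MM'})_\ast (p^H)^\ast A$.

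The calculation is then the following chain, where the key step is the projection formula applied along the finite flat map $\pi_{MM'}$:
\begin{align*}
\Phi_{\mathcal Q}(\operatorname{ind}_H^G A) &= (q^G)_\ast\bigl(\mathcal Q \otimes^{\mathbf L} (p^G)^\ast (\pi_M)_\ast A\bigr) \\
&= (q^G)_\ast\bigl(\mathcal Q \otimes^{\mathbf L} (\pi_{MM'})_\ast (p^H)^\ast A\bigr) \\
&= (q^G)_\ast (\pi_{MM'})_\ast\bigl((\pi_{MM'})^\ast \mathcal Q \otimes^{\mathbf L} (p^H)^\ast A\bigr) \\
&= (\pi_{M'})_\ast (q^H)_\ast\bigl(\operatorname{Res}^{G_\Delta}_{H_\Delta}\mathcal Q \otimes^{\mathbf L} (p^H)^\ast A\bigr) \\
&= \operatorname{ind}_H^G \Phi_{\operatorname{Res}^{G_\Delta}_{H_\Delta}\mathcal Q}(A),
\end{align*}
where the second-to-last line uses $q^G \circ \pi_{MM'} = \pi_{M'} \circ q^H$.

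The main obstacle, such as it is, is checking Cartesianness of the square above; but this is a standard fact for stacks of the form $[Y/H] \to [Y/G]$ obtained from an inclusion of a subgroup, since on $S$-points an object of the fiber product amounts to a pair of $H$-torsors together with a $G$-equivariant identification of their induced $G$-torsors, which canonically reduces back to a single $H$-torsor on $M \times M'$ with the diagonal action. Everything else is base change plus the projection formula in the equivariant setting, both of which hold since $\pi_M$, $\pi_{MM'}$, $\pi_{M'}$ are finite flat.
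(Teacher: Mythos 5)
Your proof is correct and follows essentially the same route as the paper: the paper's key identity $(\operatorname{pr}_1^G)^\ast\operatorname{ind}_H^G=\operatorname{ind}_{H_\Delta}^{G_\Delta}(\operatorname{pr}_1^H)^\ast$ is exactly your flat base change along the Cartesian square $[(M\times M')/H_\Delta]\to[(M\times M')/G_\Delta]\times_{[M/G]}[M/H]$, and the remaining steps (projection formula along the finite flat map $\pi_{MM'}$, then commuting the two pushforwards) match the paper's computation line for line. The only difference is cosmetic: you spell out the Cartesianness that the paper asserts as a general fact about $G$-equivariant maps of $G$-varieties.
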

    \begin{proof}
        Write $\operatorname{pr}_i^G$ for the projections from $[(M\times M')/G_\Delta]$ and $\operatorname{pr}_i^H$ for those from $[(M\times M')/H_\Delta]$.
        Note first that $(\operatorname{pr}_1^G)^\ast\operatorname{ind}_H^G=\operatorname{ind}_{H_\Delta}^{G_\Delta}(\operatorname{pr}_1^H)^\ast$.
        Indeed, this is true for any $G$-equivariant map between $G$-varieties.
        We therefore have
        \begin{align*}
            (\operatorname{pr}_2^G)_\ast(\mathcal Q\otimes (\operatorname{pr}_1^G)^\ast \operatorname{ind}_H^GA)&=(\operatorname{pr}_2^G)_\ast(\mathcal Q\otimes\operatorname{ind}_{H_\Delta}^{G_\Delta}(\operatorname{pr}_1^H)^\ast A)\\
            &=(\operatorname{pr}_2^G)_\ast\operatorname{ind}_{H_\Delta}^{G_\Delta}(\operatorname{res}_{H_\Delta}^{G_\Delta}\mathcal Q\otimes (\operatorname{pr}_1^H)^\ast A)\\
            &=\operatorname{ind}_H^G(\operatorname{pr}_2^H)_\ast(\operatorname{res}_{H_\Delta}^{G_\Delta}\mathcal Q\otimes (\operatorname{pr}_1^H)^\ast A)\qedhere
        \end{align*}
    \end{proof}
    Note now that $\operatorname{Res}_{S_2}^{S_n}(\mathcal P^{\boxtimes n},\rho_{\operatorname{perm}})=(\mathcal P^{\boxtimes 2},\rho_{\operatorname{perm}})\boxtimes^{\mathbf L}\mathcal P^{\boxtimes (n-2)}$.
    So it remains to understand the $n=2$ case.

    Suppose $W=2p\hookrightarrow C$ for some $p\in C$ such that $X_p$ is smooth.
    By Corollary \ref{proj_isom_mult2}, each of the projections give isomorphisms $\operatorname{BKR}_W\to W$.
    Hence the projections restrict to $r_i:(X^2)_{\operatorname{BKR}_W}\to X_W$.
    \[\begin{tikzcd}
        X_W\dar&(X^2)_{\operatorname{BKR}_W}\dar\rar\lar&X_W\dar\\
        W&\operatorname{BKR}_W\rar{\cong}\lar[swap]{\cong}&W
    \end{tikzcd}\]
    \begin{lemma}
        The map $(r_1,r_2):(X^2)_{\operatorname{BKR}_W}\to X_W\times_{\operatorname{BKR}_W}X_W$ is an isomorphism, where each $X_W$ are considered $\operatorname{BKR}_W$-schemes by inverting the isomorphisms $\operatorname{BKR}_W\to W$ on each side.
    \end{lemma}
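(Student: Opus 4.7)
My plan is to show that both $(X^2)_{\operatorname{BKR}_W}$ and $X_W\times_{\operatorname{BKR}_W}X_W$ are canonically isomorphic to the fiber product $(X_W\times X_W)\times_{W\times W}\operatorname{BKR}_W$, and that under these identifications the comparison map is exactly $(r_1,r_2)$. The single geometric input is that, by hypothesis, both $p_1,p_2\colon\operatorname{BKR}_W\to C$ factor through $W\hookrightarrow C$, so that the natural map $(p_1,p_2)\colon\operatorname{BKR}_W\to C\times C$ is in fact a closed immersion $\operatorname{BKR}_W\hookrightarrow W\times W$.

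For the left-hand side, $(X^2)_{\operatorname{BKR}_W}=X^2\times_{C\times C}\operatorname{BKR}_W$ by definition, and performing the base change in two stages (first to $W\times W$, then to $\operatorname{BKR}_W$) immediately yields $(X_W\times X_W)\times_{W\times W}\operatorname{BKR}_W$. For the right-hand side, the two $\operatorname{BKR}_W$-scheme structures on $X_W$ are given by $p_1^{-1}\circ\pi$ and $p_2^{-1}\circ\pi$, so $X_W\times_{\operatorname{BKR}_W}X_W$ is the fiber product of $(p_1^{-1}\pi)\times(p_2^{-1}\pi)\colon X_W\times X_W\to\operatorname{BKR}_W\times\operatorname{BKR}_W$ against the diagonal $\Delta\colon\operatorname{BKR}_W\to\operatorname{BKR}_W\times\operatorname{BKR}_W$. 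Post-composing both of these arrows with the isomorphism $p_1\times p_2\colon\operatorname{BKR}_W\times\operatorname{BKR}_W\xrightarrow{\sim}W\times W$ turns the first into $\pi\times\pi$ and the diagonal into the closed immersion $(p_1,p_2)\colon\operatorname{BKR}_W\hookrightarrow W\times W$, so this fiber product is once again $(X_W\times X_W)\times_{W\times W}\operatorname{BKR}_W$.

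To match the two identifications with $(r_1,r_2)$ I would do a direct $T$-point chase: a $T$-point of $(X^2)_{\operatorname{BKR}_W}$ is a triple $(y_1,y_2,z)$ with $\pi(y_i)=p_i(z)$; this forces each $y_i$ to factor through $X_W$, and the resulting pair $(x_1,x_2)=(r_1(y_1,y_2,z),r_2(y_1,y_2,z))$ satisfies $p_1^{-1}(\pi(x_1))=z=p_2^{-1}(\pi(x_2))$, which is exactly the datum of a $T$-point of $X_W\times_{\operatorname{BKR}_W}X_W$. I do not expect any real obstacle beyond keeping track of the two different $\operatorname{BKR}_W$-structures on $X_W$, which are precisely what the closed immersion $\operatorname{BKR}_W\hookrightarrow W\times W$ absorbs after composing with $p_1\times p_2$; beyond that the argument is a purely formal manipulation of fiber products.
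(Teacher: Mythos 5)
Your argument is correct, and it proves the lemma by a different route than the paper. You identify both sides with the single fiber product $(X_W\times X_W)\times_{W\times W}\operatorname{BKR}_W$, using only two inputs that are already available at this point of the paper: $\operatorname{BKR}_W$ is a closed subscheme of $C\times C$, and its two projections factor through isomorphisms $\operatorname{BKR}_W\to W$ (Corollary \ref{proj_isom_mult2}), so that the inclusion factors as a closed immersion $\operatorname{BKR}_W\hookrightarrow W\times W$ and $(p_1,p_2)$ coincides with this inclusion; the $T$-point chase then matches the comparison map with $(r_1,r_2)$, and the inverse is recovered by $z=p_1^{-1}(\pi(x_1))$. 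The paper instead argues \'etale-locally: using smoothness of $\pi$ near $p$ it reduces to the model $\mathbb A^2\to\mathbb A^1$, $W=\mathbb V(x^2)$, computes $\operatorname{BKR}_W=\operatorname{Spec}k[x_1,x_2]/(x_1+x_2,(x_1-x_2)^2)$ explicitly, and exhibits $(r_1,r_2)$ as an isomorphism of rings, which also makes the identification $x_1\leftrightarrow -x_2$ visible in coordinates. Your proof buys generality and robustness: it is a purely formal manipulation of fiber products that never invokes smoothness of the fiber $X_p$ or any local model, so it would apply verbatim whenever the projections $\operatorname{BKR}_W\to W$ are isomorphisms; the paper's computation buys an explicit coordinate description of $\operatorname{BKR}_W$ and of the gluing, which is useful for intuition but not logically needed downstream. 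One small point to make explicit when writing this up is that the two identifications use the \emph{same} map $\operatorname{BKR}_W\to W\times W$ (namely the restriction of the inclusion into $C\times C$), which is exactly why the two fiber products agree on the nose; as you note, this is where the hypothesis on the isomorphisms $\operatorname{BKR}_W\to W$ is absorbed.
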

    \begin{proof}
        We can check this \'etale-locally.
        As $\pi$ is smooth near $p$, so we may reduce to the \'etale local situation of the fibration $\pi:\mathbb A^2\to\mathbb A^1,(x,y)\mapsto x$ and $W=\mathbb V(x^2)$.
        Then $\operatorname{BKR}_W=\operatorname{Spec}k[x_1,x_2]/(x_1+x_2,(x_1-x_2)^2)$.
        Indeed, this is an $S_2$-cluster and it projects to exactly $W$.

        So the projections $W\leftarrow\operatorname{BKR}_W\rightarrow W$ can be represented by $k[x_2]/(x_2^2)\rightarrow k[x_1,x_2]/(x_1+x_2,(x_1-x_2)^2)\leftarrow k[x_1]/(x_1^2)$.
        Note that these are isomorphisms, which identifies $x_1\leftrightarrow -x_2$.
        Hence $X_W\rightarrow\operatorname{BKR}_W\leftarrow X_W$ can be represented by $k[x_1,y_1]/(x_1^2)\leftarrow k[x_1]/(x_1^2)\rightarrow k[-x_1,y_2]/(x_1^2)$.

        Thus $(r_1,r_2)$ is represented by the map \[k[x_1,y_1,y_2]/(x_1^2)\to k[x_1,x_2,y_1,y_2]/(x_1+x_2,(x_1-x_2)^2)\] which is an isomorphism.
    \end{proof}
    \begin{proposition}
        Let $Z\in X^{[2],\pi}_U$ and suppose $Z$ is connected.
        Write $W=\pi^{[2]}(Z)$.
        Then we have an $S_2$-equivariant isomorphism
        \[\operatorname{BKR}(k(Z))\cong (i_{(X^2)_{\operatorname{BKR}_W}})_\ast(r_1^\ast\mathcal O_Z\otimes^{\mathbf L}r_2^\ast\mathcal O_Z)\]
        with the latter linearized by swapping the factors.
    \end{proposition}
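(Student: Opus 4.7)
The plan is to work étale-locally, using the explicit computation of $\operatorname{BKR}_W$ from the preceding lemma. Since $W\in U$ has its nonreduced component at a smooth fiber, $\pi$ is smooth near the support of $Z$. Hence after étale base change I may assume $\pi:\mathbb A^2\to\mathbb A^1$, $(x,y)\mapsto x$, with $W=V(x^2)$. Since $\pi|_Z$ is an isomorphism onto $W$ and $Z$ is connected, after translation $Z=V(x^2,y-ax)$ for some $a\in k$. In these coordinates, the preceding lemma gives $(X^2)_{\operatorname{BKR}_W}=\operatorname{Spec} R$ where $R=k[x,y_1,y_2]/(x^2)$, with $r_1^\sharp$ sending $(x,y)\mapsto(x,y_1)$ and $r_2^\sharp$ sending $(x,y)\mapsto(-x,y_2)$.

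Next I identify $\operatorname{BKR}_Z$ scheme-theoretically inside $(X^2)_{\operatorname{BKR}_W}$. By Lemma \ref{cluster_projection} applied to each of the two projections, $\operatorname{BKR}_Z$ is contained in $\operatorname{pr}_1^{-1}(Z)\cap\operatorname{pr}_2^{-1}(Z)$, which in the local coordinates cuts out the ideal $(y_1-ax,\,y_2+ax)\subseteq R$. The quotient $R/(y_1-ax,y_2+ax)\cong k[x]/(x^2)$ has length $2$, matching that of $\operatorname{BKR}_Z$, so the two closed subschemes coincide. Meanwhile, since $\pi$ is smooth near $p$, each $r_i$ is smooth hence flat, so $r_1^\ast\mathcal O_Z=R/(y_1-ax)$ and $r_2^\ast\mathcal O_Z=R/(y_2+ax)$ without derived corrections. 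A direct check shows $y_2+ax$ is a nonzerodivisor in $R/(y_1-ax)$, whence $r_1^\ast\mathcal O_Z\otimes^{\mathbf L}_R r_2^\ast\mathcal O_Z = R/(y_1-ax,y_2+ax) = \mathcal O_{\operatorname{BKR}_Z}$ is concentrated in degree $0$.

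For the $S_2$-equivariance, the swap $\sigma\in S_2$ acts on $(X^2)_{\operatorname{BKR}_W}$ by exchanging the two factors and satisfies $r_1\circ\sigma=r_2$ (and vice versa), so pulling back by $\sigma$ exchanges $r_1^\ast\mathcal O_Z$ with $r_2^\ast\mathcal O_Z$. The linearization on the right-hand side by swapping tensor factors therefore matches the natural $S_2$-action on $\mathcal O_{\operatorname{BKR}_Z}$ inherited from the equivariant embedding $\operatorname{BKR}_Z\hookrightarrow X\times X$, and hence the isomorphism produced in the previous step upgrades to an $S_2$-equivariant one.

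The main obstacle is the Tor-vanishing step: both the length-count identification of $\operatorname{BKR}_Z$ with $V(y_1-ax,y_2+ax)$ and the claim that $y_2+ax$ is a nonzerodivisor modulo $y_1-ax$ rely crucially on the smoothness of $X_p$ and of $\pi$ near $p$, which is exactly the hypothesis packaged into $W\in U$. The use of noncanonical coordinates is harmless, since both sides of the asserted isomorphism are intrinsic to the data $(Z,W)$ and the comparison is a statement about coherent sheaves on $(X^2)_{\operatorname{BKR}_W}$.
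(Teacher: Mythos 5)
Your local computation is essentially sound, and its overall shape mirrors the paper's argument (identify the right-hand side as the structure sheaf of a length~$2$ subscheme, then trap $\operatorname{BKR}_Z$ inside it via Lemma~\ref{cluster_projection} and conclude by a length count); the paper just does this intrinsically via the isomorphism $(r_1,r_2):(X^2)_{\operatorname{BKR}_W}\to X_W\times_{\operatorname{BKR}_W}X_W$ rather than in \'etale coordinates. However, there is a genuine gap in your identification of $\operatorname{BKR}_Z$. Lemma~\ref{cluster_projection} only gives $\operatorname{BKR}_Z\subseteq\operatorname{pr}_1^{-1}(Z)\cap\operatorname{pr}_2^{-1}(Z)=Z\times Z$ as subschemes of $X^2$; this intersection has length $4$, not $2$. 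The ideal $(y_1-ax,\,y_2+ax)\subseteq R$ you write down is the ideal of $(Z\times Z)\cap (X^2)_{\operatorname{BKR}_W}$, so your length-$2$ count silently assumes that $\operatorname{BKR}_Z$ lies scheme-theoretically inside $(X^2)_{\operatorname{BKR}_W}$, i.e.\ that $\pi^2$ maps $\operatorname{BKR}_Z$ into $\operatorname{BKR}_W$ and not merely into $W\times W$. This is not a formality: inside $Z\times Z$ there are other $S_2$-invariant length-$2$ subschemes surjecting onto $Z$ under both projections, e.g.\ $V(x_1-x_2,\,x_1^2,\,y_1-ax_1,\,y_2-ax_2)$, so without the extra containment the count cannot single out $\operatorname{BKR}_Z$. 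The paper supplies exactly this step (the observation that, since $Z\subset X_W$, the cluster $\operatorname{BKR}_Z$ is a closed subscheme of the preimage of $\operatorname{BKR}_W$ under the projection to $C^2$, which follows from the compatibility of the isospectral Hilbert schemes of $X$ and $C$ over $C^{(n)}$); you need to prove or at least invoke it.

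The remaining ingredients of your write-up — flatness of the $r_i$, the nonzerodivisor/Tor-vanishing computation showing the derived tensor product is concentrated in degree $0$, and the matching of the swap linearizations via $r_1\circ\sigma=r_2$ (legitimate because both factors are cyclic quotients of the structure sheaf, so the identification is canonical) — are correct, and the reduction to a single \'etale-local chart is acceptable here because both sides are finite-length sheaves supported at the $S_2$-fixed point $(x,x)$.
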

    \begin{proof}
        Let us first show that the right-hand side is the structure sheaf of a length $2$ subscheme $Z'$.
        This follows from the preceding lemma.
        Indeed, if $i:r_1^{-1}(Z)\to (X^2)_{\operatorname{BKR}_Z}$ is the inclusion, then $r_2\circ i$ is an isomorphism (this is because $Z\to W$ is an isomorphism, which is a section of $X_W\to W\cong\operatorname{BKR}_W$).
        So
        \[r_1^\ast\mathcal O_Z\otimes^{\mathbf L}r_2^\ast\mathcal O_Z=i_\ast\mathbf Li^\ast r_2^\ast\mathcal O_Z=i_\ast (r_2\circ i)^\ast\mathcal O_Z\]
        which is the structure sheaf of the closed subscheme corresponding to $Z$ under the isomorphism $r_2\circ i$.
        In particular, it has length $2$.

        Note also that the $S_2$-action on $Z$ is precisely the $S_2$-action on
        \[r_1^{-1}(Z)\times_{(X^2)_{\operatorname{BKR}_W}}r_2^{-1}(Z)=r_1^{-1}(Z)\times_{(X^2)_{\operatorname{BKR}_W}}[-1]^\ast r_1^{-1}(Z)\]
        by swapping the product factors, where $[-1]$ denotes the non-identity element in $S_2$.
        Hence the isomorphism $\mathcal O_{Z'}\cong r_1^\ast\mathcal O_Z\otimes^{\mathbf L}r_2^\ast\mathcal O_Z$ is $S_2$-equivariant.

        Since $Z\in X_W$, $\operatorname{BKR}_Z$ must be a closed subscheme of the preimage of $\operatorname{BKR}_W$ under the projection to $C^2$.
        But we also know that it is a closed subscheme of $Z$ under each of the projections to $X$.
        This means that $Z\hookrightarrow Z'$.
        But both has length $2$, hence they are the same.
    \end{proof}
    \begin{remark}
        Note that it is important that $Z\in X_U^{[2],\pi}$ here.
        The proposition is not true in general for length $2$ subschemes.
    \end{remark}
    \begin{corollary}
        Let $Z,W$ be as in the preceding proposition, then
        \[\mathcal F^{\boxtimes 2}(\operatorname{BKR}(k(Z)))=(i_{(X^2)_{\operatorname{BKR}_W}})_\ast (r_1^\ast L\otimes r_2^\ast L)\]
        where $L$ is the line bundle on $X_W$ such that $(i_{X_W})_\ast L=\mathcal F(\mathcal O_Z)$.
        The linearization on the right-hand side is by swapping the factors.
    \end{corollary}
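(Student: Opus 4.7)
My plan mirrors the strategy used in the proof of Proposition \ref{section_BKR}: since both the source and the kernel are compatible outer tensors (after an appropriate base change), the transform can be computed coordinate-by-coordinate. The preceding proposition tells us $\operatorname{BKR}(k(Z))=(i_{(X^2)_{\operatorname{BKR}_W}})_\ast(r_1^\ast\mathcal O_Z\otimes^{\mathbf L}r_2^\ast\mathcal O_Z)$, so the first move is to substitute this into the formula of Remark \ref{powerfm}. Since everything in sight is supported over $\operatorname{BKR}_W\subset C^2$, a two-factor version of Lemma \ref{relative_fm_bc}(b) reduces the computation to the relative Fourier-Mukai transform over $\operatorname{BKR}_W$ whose kernel is the restriction of $\mathcal P^{\boxtimes 2}$ to $X^2_{\operatorname{BKR}_W}\times_{\operatorname{BKR}_W}X^2_{\operatorname{BKR}_W}$.

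Next, I invoke the preceding lemma's isomorphism $X^2_{\operatorname{BKR}_W}\cong X_W\times_{\operatorname{BKR}_W}X_W$, so that $r_1^\ast\mathcal O_Z\otimes^{\mathbf L}r_2^\ast\mathcal O_Z$ is recognized as the outer tensor over $\operatorname{BKR}_W$ of two copies of $\mathcal O_Z$. Because $W\in U$ forces $\pi$ to be smooth over each point of $W$, the Poincar\'e sheaf $\mathcal P$ restricts to a line bundle $\mathcal P_W$ on $X_W\times_WX_W$. Under the above identification, the kernel factors as $\mathcal P_W\boxtimes_{\operatorname{BKR}_W}\mathcal P_W$ with the $(S_2)_\Delta$-linearization acting by swap, in direct analogy to the decomposition $\mathcal P^{\boxtimes n}\otimes^{\mathbf L}(\operatorname{pr}_1^\Delta)^\ast(s_\ast\mathcal O)^{\boxtimes n}=\bigotimes_i{}^{\mathbf L}(q_i^\ast\mathcal P\otimes^{\mathbf L}\operatorname{pr}_1^\ast p_i^\ast s_\ast\mathcal O)$ used in Proposition \ref{section_BKR}.

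With both the source and the kernel decomposed as outer tensors over $\operatorname{BKR}_W$, the relative transform splits as a product of two single-coordinate relative transforms over $\operatorname{BKR}_W\cong W$. Each of these is just the fiberwise Fourier-Mukai $\mathcal F$ applied to $\mathcal O_Z$, which by the definition of $L$ yields $(i_{X_W})_\ast L$. Re-assembling gives $(i_{(X^2)_{\operatorname{BKR}_W}})_\ast(r_1^\ast L\otimes r_2^\ast L)$ with the swap linearization coming from the combined swap on source and on kernel, as desired. The main obstacle I anticipate is verifying cleanly that the restricted kernel $\mathbf L(i\times i)^\ast\mathcal P^{\boxtimes 2}$ really does factor as $\mathcal P_W\boxtimes_{\operatorname{BKR}_W}\mathcal P_W$ equivariantly; this is essentially a compatibility of base changes and may require the local model used in the preceding lemma to be made explicit, particularly to confirm that the two identifications $\operatorname{BKR}_W\cong W$ enter symmetrically so that the $(S_2)_\Delta$-linearization on the resulting outer tensor is exactly the swap.
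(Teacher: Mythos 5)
Your proposal is correct and follows essentially the same route as the paper: base-change via Lemma \ref{relative_fm_bc}(b) to the fiber over $\operatorname{BKR}_W$, factor the restricted kernel and the source as outer tensors over $\operatorname{BKR}_W\cong W$ using the preceding proposition and lemma, evaluate each single-coordinate transform as $L$, and track the swap linearization. The only cosmetic difference is that the paper verifies the K\"unneth-type splitting by hand, computing $\mathcal R=\mathbf Li_W^\ast\mathcal P\otimes^{\mathbf L}(p_1^W)^\ast\mathcal O_Z=(i_{Z\times_WX_W})_\ast L$ and pushing forward along an isomorphism of supports, rather than invoking a general relative K\"unneth splitting of the transform.
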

    \begin{proof}
        Let us make use of Lemma \ref{relative_fm_bc}(b).
        We have a commutative diagram
        \[\begin{tikzcd}
            (X^2)_{\operatorname{BKR}_W}\dar{r_1}[swap]{r_2}&\lar[swap]{p_1^{\operatorname{BKR}_W}} (X^2)_{\operatorname{BKR}_W}\times_{\operatorname{BKR}_W}(X^2)_{\operatorname{BKR}_W}\rar{i_{\operatorname{BKR}_W}}\dar{(r_1,r_1)}[swap]{(r_2,r_2)}&X^2\times_{C^2}X^2\dar{\operatorname{pr}_{13}}[swap]{\operatorname{pr}_{24}}\\
            X_W&\lar[swap]{p_1^W} X_W\times_WX_W\rar{i_W}&X\times_CX
        \end{tikzcd}\]
        where the horizontal arrows are the natural inclusions and first projections, and and vertical arrows are given by first (resp.~second) projections on each factor.

        We then have, by the preceding proposition.
        \begin{align*}
            \mathbf Li_{\operatorname{BKR}_W}^\ast\mathcal P^{\boxtimes 2}\otimes^{\mathbf L} (p_1^{\operatorname{BKR}_W})^\ast\operatorname{BKR}(k(Z))=(r_1,r_1)^\ast\mathcal R\otimes^{\mathbf L}(r_2,r_2)^\ast\mathcal R
        \end{align*}
        where $\mathcal R=\mathbf Li_W^\ast\mathcal P\otimes^{\mathbf L}(p_1^W)^\ast\mathcal O_Z$, and the $S_2$-linearization is by permuting the factors.
        By Lemma \ref{relative_fm_bc}, $\mathcal R=(i_{Z\times_WX_W})_\ast L$, where $L$ here is considered as a sheaf on $Z\times_WX_W$ via the second projection.

        By the preceding proposition, we have
        \[(r_1,r_1)^\ast\mathcal R\otimes^{\mathbf L}(r_2,r_2)^\ast\mathcal R\cong (i_{\operatorname{BKR}_Z\times_{\operatorname{BKR}_W}(X^2)_{\operatorname{BKR}_W}})_\ast(r_1^\ast L\otimes r_2^\ast L)\]
        ignoring the linearization.

        This isomorphism is $S_2$-equivariant with the latter linearized by permuting the factors.
        Indeed, the isomorphism shows that the tensor product here is acyclic, and the isomorphism itself arises from maps that are symmetrical on each factors.
        Pushing this forward to $(X^2)_{\operatorname{BKR}_W}$ then gives $r_1^\ast L\otimes r_2^\ast L$.
    \end{proof}
    \begin{corollary}
        Suppose $(Z,Z')\in X^{[n],\pi}_U\times_UX^{[n],\pi,\sharp}_U$ and let $W=\pi^{[n]}(Z)=\pi^{[n]}(Z')$.
        Write $i:(X^n)_{\operatorname{BKR}_W}\to X^n$, and denote $\mathcal F^{\boxtimes n}(\operatorname{BKR}(k(Z)))=i_\ast \mathcal L$, $\mathcal F^{\boxtimes n}(\operatorname{BKR}(k(Z')))=i_\ast \mathcal L'$.

        Then $\mathcal L$ is locally $S_n$-isomorphic to the structure sheaf, and
        \[i_\ast(\mathcal L\otimes\mathcal L')=\mathcal F^{\boxtimes n}(\operatorname{BKR}(k(\mu(Z,Z')))\]
    \end{corollary}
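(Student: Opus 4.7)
The plan is to reduce to the $n=2$ case of the preceding corollary by decomposing $Z$ and $Z'$ into connected components, applying the earlier inductive description of $\operatorname{BKR}$-images together with Lemma \ref{induction_FM}, and then computing the tensor product componentwise on the connected components of $(X^n)_{\operatorname{BKR}_W}$.

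First I would decompose $Z = Z_{12} \sqcup x_3 \sqcup \cdots \sqcup x_n$, where $Z_{12}$ is the (possibly nonreduced) length-$2$ component lying over the doubled point $p_1$ of $W$ when $W$ has one (if $W$ is fully reduced, replace $\operatorname{ind}_{S_2}^{S_n}$ by $\operatorname{ind}_{S_1}^{S_n}$ throughout), and analogously decompose $Z'$. Applying the earlier BKR-cluster corollary and Lemma \ref{induction_FM} with $H = S_2 \subset S_n$, using the identity $\operatorname{Res}_{S_2}^{S_n}(\mathcal P^{\boxtimes n}, \rho_{\operatorname{perm}}) = (\mathcal P^{\boxtimes 2}, \rho_{\operatorname{perm}}) \boxtimes^{\mathbf L} \mathcal P^{\boxtimes (n-2)}$ noted above, yields
\[
\mathcal F^{\boxtimes n}(\operatorname{BKR}(k(Z))) = \operatorname{ind}_{S_2}^{S_n}\!\bigl(\mathcal F^{\boxtimes 2}(\operatorname{BKR}(k(Z_{12}))) \boxtimes^{\mathbf L} \mathcal F(k(x_3)) \boxtimes^{\mathbf L} \cdots \boxtimes^{\mathbf L} \mathcal F(k(x_n))\bigr),
\]
and likewise for $Z'$ and $\mu(Z, Z')$. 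The preceding corollary identifies the length-$2$ factor (for nonreduced $Z_{12}$) as $(i_{(X^2)_{\operatorname{BKR}_{W_{12}}}})_\ast(r_1^\ast L_Z \otimes r_2^\ast L_Z)$, with $L_Z$ the strongly numerically trivial line bundle on $2X_{p_1}$ whose pushforward is $\mathcal F(\mathcal O_{Z_{12}})$; each $\mathcal F(k(x_i))$ for $i \geq 3$ is a rank-$1$ sheaf on $X_{p_i}$, and a line bundle when $x_i \in X^\circ$.

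The connected components of $(X^n)_{\operatorname{BKR}_W}$ are permuted by $S_n$ along cosets of $S_n / S_2$. By Lemma \ref{closed_embeddings_outer}, $\mathcal L$ restricted to the canonical component $(X^2)_{\operatorname{BKR}_{W_{12}}} \times X_{p_3} \times \cdots \times X_{p_n}$ equals the outer tensor of line bundles $(r_1^\ast L_Z \otimes r_2^\ast L_Z) \boxtimes L_{x_3} \boxtimes \cdots \boxtimes L_{x_n}$; the stabilizer $S_2$ (swapping $r_1 \leftrightarrow r_2$) acts on it trivially via the canonical symmetry of a tensor with itself. Since line bundles are locally trivial and the $S_n$-characters on stalks are then trivial, $\mathcal L$ is locally $S_n$-isomorphic to the structure sheaf, proving the first claim.

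For the tensor product identity, the $S_n$-translates of the canonical component have pairwise disjoint supports, so one has the equivariant identity $\operatorname{ind}_{S_2}^{S_n}(A) \otimes \operatorname{ind}_{S_2}^{S_n}(A') = \operatorname{ind}_{S_2}^{S_n}(A \otimes A')$ for $A, A'$ supported there. Proposition \ref{multiply} gives $L_Z \otimes L_{Z'}$ equal to the strongly numerically trivial line bundle for $\mu(Z_{12}, Z'_{12})$, so the outer product $r_1^\ast(L_Z \otimes L_{Z'}) \otimes r_2^\ast(L_Z \otimes L_{Z'})$ matches $\mathcal F^{\boxtimes 2}(\operatorname{BKR}(k(\mu(Z_{12}, Z'_{12}))))$ via the preceding corollary; on each remaining $X_{p_i}$, Theorem \ref{surfacegroupscheme} gives $\mathcal F(k(x_i)) \otimes \mathcal F(k(x'_i)) = \mathcal F(k(x_i \cdot x'_i))$ under the group-scheme product. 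Reassembling through induction yields $i_\ast(\mathcal L \otimes \mathcal L') = \mathcal F^{\boxtimes n}(\operatorname{BKR}(k(\mu(Z, Z'))))$. The main obstacle is careful bookkeeping of the $S_n$-linearizations through the restriction/induction manipulations, particularly in the mixed case where $Z'_{12}$ reduces to two distinct points in $X_{p_1}$, so that $\mathcal F^{\boxtimes 2}(\operatorname{BKR}(k(Z'_{12})))$ becomes a sum of rank-$1$ summands rather than a single line bundle on the length-$2$ thickening, requiring a parallel reduced-form check on $\mu(Z, Z')$.
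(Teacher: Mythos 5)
Your proposal is correct and follows essentially the same route as the paper: decompose into the length-$2$ factor over the doubled point plus reduced points, reduce via the cluster corollary and Lemma \ref{induction_FM} using $\operatorname{Res}_{S_2}^{S_n}(\mathcal P^{\boxtimes n},\rho_{\operatorname{perm}})=(\mathcal P^{\boxtimes 2},\rho_{\operatorname{perm}})\boxtimes^{\mathbf L}\mathcal P^{\boxtimes (n-2)}$, identify the length-$2$ piece by the preceding corollary, and check the tensor identity on disjoint $S_n$-translates using Proposition \ref{multiply} and the fiberwise group law. The mixed case you defer as bookkeeping (where $Z'$ is reduced over the doubled point) is precisely what the paper settles by the one-line projection-formula computation $(r_1^\ast L\otimes r_2^\ast L)\otimes\operatorname{ind}_1^{S_2}(i_{X_p^2})_\ast(L_+\boxtimes L_-)=\operatorname{ind}_1^{S_2}(i_{X_p^2})_\ast((L_+\otimes L_{\rm red})\boxtimes(L_-\otimes L_{\rm red}))$, so your outline matches the paper's argument.
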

    \begin{proof}
        Case 1: $W$ is reduced.
        Take any $Z''=x_1\sqcup\cdots\sqcup x_n$ over $W$.
        Note that $p_i=\pi(x_i)$ are all distinct.
        Then by Lemma \ref{induction_FM} and Lemma \ref{closed_embeddings_outer}, we have
        \begin{align*}
            \mathcal F^{\boxtimes n}(\operatorname{BKR}(k(Z'')))&=\operatorname{ind}_1^{S_n}(\mathcal F(k(x_1))\boxtimes^{\mathbf L}\cdots\boxtimes^{\mathbf L}\mathcal F(k(x_n)))\\
            &=\operatorname{ind}_1^{S_n}i_{X_{p_1}\times\cdots\times X_{p_n}}(L_1\boxtimes\cdots\boxtimes L_n)
        \end{align*}
        where $\mathcal F(k(x_i))=(i_{X_{p_i}})_\ast L_i$ (these outer tensors are acyclic due to Proposition \ref{mcm_outer_tensor}).
        The claims readily follow.

        Case 2: $W$ is non-reduced.
        Take $Z''=Z_2\sqcup x_3\sqcup\cdots\sqcup x_n\in X^{[n],\pi,\sharp}$ over $W$, where $\pi^{[2]}(Z_2)_{\rm red}$ is a point $p$ such that $p,p_i=\pi(x_i)$ are all distinct.
        Let $(i_{X_{p_i}})_\ast L_i=\mathcal F(k(x_i))$.

        Suppose $Z_2$ is non-reduced.
        Then $Z_2\in X^{[2],\pi}$.
        Write $W=\pi^{[2]}(Z_2)$.
        Then by the same argument as in case 1 and the preceding corollary,
        \[\mathcal F^{\boxtimes n}(\operatorname{BKR}(k(Z'')))=\operatorname{ind}_{S_2}^{S_n}i_{(X^2)_{\operatorname{BKR_W}}\times X_{p_3}\times\cdots\times X_{p_n}}((r_1^\ast L\otimes r_2^\ast L)\boxtimes L_3\boxtimes\cdots\boxtimes L_n)\]
        where $L,r_i$ are as in the preceding corollary.

        Suppose it is reduced, say $Z_2=x_+\sqcup x_-$ with $\mathcal F(k(x_{\pm}))=(i_{X_p})_\ast L_\pm$, then by the same argument as in case 1,
        \begin{align*}
            \mathcal F^{\boxtimes n}(\operatorname{BKR}(k(Z'')))&=\operatorname{ind}_1^{S_n}i_{X_p^2\times X_{p_3}\times\cdots\times X_{p_n}}(L_+\boxtimes L_-\boxtimes L_3\boxtimes\cdots\boxtimes L_n).
        \end{align*}
        To see that the identity is satisfied, simply note that on $(X^2)_{\operatorname{BKR}_W}$, we have $(r_1^\ast L\otimes r_2^\ast L)\otimes\operatorname{ind}_1^{S_2}(i_{X_p^2})_\ast (L_+\boxtimes L_-)=\operatorname{ind}_1^{S_2}(\operatorname{res}_1^{S_2}(r_1^\ast L\otimes r_2^\ast L)\otimes (i_{X_p^2})_\ast (L_+\boxtimes L_-))=\operatorname{ind}_1^{S_2}(i_{X_p^2})_\ast((L_+\otimes L_{\rm red})\boxtimes (L_-\otimes L_{\rm red}))$.
    \end{proof}
    \begin{proposition}
        The condition in Corollary \ref{fibercheck_tots} holds.
    \end{proposition}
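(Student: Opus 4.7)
The plan is to use the preceding corollary to translate the desired identity into one on $D^b\operatorname{Coh}([X^n/S_n])$ involving pushforwards along the closed immersion $i:(X^n)_{\operatorname{BKR}_W}\hookrightarrow X^n$, and then to push it through $\operatorname{BKR}^\top=q_\ast\circ p^\ast$ using the crucial fact, also furnished by the preceding corollary, that $\mathcal L$ (the sheaf attached to $Z$) is locally $S_n$-isomorphic to the structure sheaf.

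Writing $\mathcal L$ for the sheaf associated to $Z\in X^{[n],\pi}_U$ and $\mathcal L'$ for the one associated to $Z'\in X^{[n],\pi,\sharp}_U$, the preceding corollary gives
\[\mathcal F^{\boxtimes n}(\operatorname{BKR}(k(Z)))=i_\ast\mathcal L,\quad \mathcal F^{\boxtimes n}(\operatorname{BKR}(k(Z')))=i_\ast\mathcal L',\quad \mathcal F^{\boxtimes n}(\operatorname{BKR}(k(\mu(Z,Z'))))=i_\ast(\mathcal L\otimes\mathcal L').\]
Since $i$ is a closed immersion, underived pushforward commutes with underived tensor product, so $i_\ast(\mathcal L\otimes\mathcal L')=i_\ast\mathcal L\otimes^{\rm und}i_\ast\mathcal L'$. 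The desired identity therefore reduces to showing that $\operatorname{BKR}^\top$ respects this particular tensor product, i.e.~$\operatorname{BKR}^\top(i_\ast\mathcal L\otimes i_\ast\mathcal L')=\operatorname{BKR}^\top(i_\ast\mathcal L)\otimes^{\rm und}\operatorname{BKR}^\top(i_\ast\mathcal L')$.

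Base-changing $p$ and $q$ along $i$, set $\tilde X_n^W:=\tilde X_n\times_{X^n}(X^n)_{\operatorname{BKR}_W}$ and let $\tilde p:\tilde X_n^W\to(X^n)_{\operatorname{BKR}_W}$, $\tilde q:\tilde X_n^W\to X^{[n]}_W$ be the base-changed projections. Since $p^\ast$ is monoidal, the identity reduces to showing that the natural lax-monoidal comparison map
\[\tilde q_\ast(\tilde p^\ast\mathcal L)\otimes \tilde q_\ast(\tilde p^\ast\mathcal L')\to \tilde q_\ast(\tilde p^\ast\mathcal L\otimes\tilde p^\ast\mathcal L')\]
is an isomorphism. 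To verify this, choose an $S_n$-invariant open $V\subset (X^n)_{\operatorname{BKR}_W}$ on which $\mathcal L|_V\cong\mathcal O_V$ as $S_n$-equivariant sheaves; then $\tilde p^\ast\mathcal L|_{\tilde p^{-1}(V)}\cong\mathcal O$, and $\tilde q_\ast(\tilde p^\ast\mathcal L)$ restricts to $\mathcal O_{\tilde q(\tilde p^{-1}(V))}$ thanks to $X^{[n]}_W$ being the $S_n$-quotient of $\tilde X_n^W$. Both sides of the comparison map then locally agree with $\tilde q_\ast(\tilde p^\ast\mathcal L'|_{\tilde p^{-1}(V)})$, and the comparison is locally an isomorphism.

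The main technical obstacle is ensuring that these local identifications are realized by the natural lax-monoidal map itself, rather than by ad hoc local isomorphisms, so that naturality allows them to glue to a genuine sheaf isomorphism on the whole of $\tilde X_n^W$. The delicacy comes from the $S_n$-action on $\tilde X_n^W$ not being free and $\tilde q$ being only generically étale; once the local trivializations of $\mathcal L$ are shown to be compatible with the unit and counit of the adjunction $(\tilde q^\ast,\tilde q_\ast)$, the identity propagates globally and verifies the hypothesis of Corollary \ref{fibercheck_tots}.
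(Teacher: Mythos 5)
Your overall strategy is the same as the paper's: feed the output of the preceding corollary into $\operatorname{BKR}^\top=q_\ast\circ\mathbf Lp^\ast$ restricted to the fibre over $W$ (as in Lemma \ref{relative_fm_bc}), and use the fact that $\mathcal L$ is locally $S_n$-isomorphic to the structure sheaf to make $q_\ast$ compatible with the relevant tensor product; the only divergence is how that last compatibility is closed. The paper's route is slightly cleaner and makes the step you flag as the ``main technical obstacle'' disappear: local $S_n$-triviality says exactly that $p^\ast\mathcal L$ descends along the finite flat quotient $q$, i.e.\ $p^\ast\mathcal L\cong q^\ast L$ with $L=q_\ast p^\ast\mathcal L$ an invertible sheaf on $X^{[n]}_W$, after which the desired identity is a one-line projection-formula computation, with no need to analyze the lax-monoidal comparison map at all. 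Your route also works, and the obstacle you leave open is in fact routine rather than delicate: the comparison map is natural in both arguments, being an isomorphism is local on the target $X^{[n]}_W$, the $S_n$-invariant trivializing opens are saturated for $\tilde q$ (fibres of $\tilde q$ are $S_n$-orbits, so their images form an open cover downstairs and the opens are full preimages), and with $\mathcal O$ in one slot the comparison map is an isomorphism because the unit $\mathcal O_{X^{[n]}_W}\to\tilde q_\ast\mathcal O$ is an isomorphism (finite flat base change of $q_\ast\mathcal O_{[\tilde X_n/S_n]}=\mathcal O_{X^{[n]}}$); the non-freeness of the action and the failure of $\tilde q$ to be étale play no role once this is said. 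Two smaller points of care: you should keep the pullback of $\mathcal L'$ derived, or note $\mathbf L\tilde p^\ast\mathcal L'=\tilde p^\ast\mathcal L'$ by \cite[Lemma 2.3]{arinkin} as the paper does (tensoring with the line bundle $\tilde p^\ast\mathcal L$ is exact, so nothing is lost either way); and the identification of $\operatorname{BKR}^\top\circ i_\ast$ with the base-changed transform $q_\ast\mathbf Lp^\ast$ on the fibre should be justified via Lemma \ref{relative_fm_bc}(b) rather than asserted.
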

    \begin{proof}
        We pass the result of the preceding corollary through $\operatorname{BKR}^\top$.
        Let $W=\pi^{[n]}(Z)=\pi^{[n]}(Z')$.
        By Lemma \ref{relative_fm_bc}, we may consider the diagram
        \[\begin{tikzcd}
            X^{[n]}_W\dar&{[}(\tilde{X}_n)_{\operatorname{BKR}_W}/S_n{]}\rar{p}\lar[swap]{q}\dar&{[}(X^n)_{\operatorname{BKR}_W}/S_n{]}\dar\\
            \{W\}&{[}\operatorname{BKR}_W/S_n{]}\rar\lar&{[}\operatorname{BKR}_W/S_n{]}
        \end{tikzcd}\]
        where $p,q$ are the restrictions of the respective maps on total spaces.
        We then have $\operatorname{BKR}^\top\circ (i_{(X^n)_{[\operatorname{BKR}_W/S_n]}})_\ast=(i_{X^{[n]}_W})_\ast q_\ast\mathbf Lp^\ast$.

        Let $\mathcal L,\mathcal L'$ be as in the preceding corollary.
        Since $\mathbf Lp^\ast\mathcal L=p^\ast\mathcal L$ is locally $S_n$-isomorphic to the structure sheaf, it is the pullback of an invertible sheaf $L=q_\ast\mathcal L$ on the quotient via $q$.

        Incidentally, we also have $\mathbf Lp^\ast\mathcal L'=p^\ast\mathcal L'$ by \cite[Lemma 2.3]{arinkin}.
        We will not formally use this fact.

        We can then compute
        \begin{align*}
            q_\ast\mathbf Lp^\ast(\mathcal L\otimes\mathcal L')&=q_\ast(p^\ast\mathcal L'\otimes q^\ast L)=(q_\ast p^\ast\mathcal L')\otimes L=(q_\ast p^\ast\mathcal L')\otimes(q_\ast p^\ast\mathcal L)
        \end{align*}
        which, combined with the preceding corollary, shows the result.
    \end{proof}
    \section{Tate-Shafarevich twists}\label{ss:tatesha}
    In this section, we briefly explain how the constructions interact with Tate-Shafarevich twists.
    We do this in order to access elliptic surfaces without a section (but still with integral fibers).

    We work in the analytic topology.
    \begin{definition}
        Suppose $M\to B$ is proper flat.
        Let $\mathfrak G$ be a connected group over $B$ acting on $M\to B$.
        For any covering $\mathcal U$ of $B$ and $\sigma\in\check{H}^1_{\mathcal U}(\mathfrak G)$, the Tate-Shafarevich twist $M^\sigma\to B$ is obtained by gluing $\{M_U:U\in\mathcal U\}$ along $\sigma_{U,U'}:M_{U\cap U'}\to M_{U\cap U'}$.
    \end{definition}
    \begin{definition}
        An open cover $\mathcal U$ of $C$ is $n$-regular if every $n$ points of $C$ is contained in some $U\in\mathcal U$.
    \end{definition}
    \begin{lemma}
        Let $Y\to C$ be an elliptic surface with integral fibers and let $X=\overline{\operatorname{Pic}}_{Y/C}^0\to C$ be its relative compactified Jacobian.
        Then for any $n$, there is an $n$-regular cover $\mathcal U$ of $C$ such that $Y=X^\sigma$ for some $\sigma\in\check H^1_{\mathcal U}(X^\circ)$.
    \end{lemma}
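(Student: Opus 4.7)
The plan is to realize $Y$ as an analytic torsor under $X^\circ$ by exhibiting explicit local trivializations, to arrange the trivializing cover so that any prescribed $n$-tuple of points of $C$ lies in a single open, and to read off $\sigma$ from the transition isomorphisms. The key local statement is that whenever $U\subset C$ is an analytic open admitting a section $\tau:U\to Y|_U$ of $\pi$, the fiberwise compactified Abel-Jacobi formula $y\mapsto \mathcal I_{\{y\}}\otimes\mathcal O_{Y_{\pi(y)}}(\tau(\pi(y)))$ determines an $X^\circ|_U$-equivariant isomorphism $\phi_\tau:Y|_U\xrightarrow{\sim}X|_U$ sending $\tau$ to the identity section of $X|_U$. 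Let $\mathcal U$ denote the collection of all analytic opens over which such a section exists.

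Next I would verify that $\mathcal U$ is $n$-regular. Given $n$ distinct points $p_1,\ldots,p_n\in C$, each integral fiber $Y_{p_i}$ has a dense smooth locus, so I can choose a smooth point $y_i\in Y_{p_i}$ for each $i$. Then $\pi$ is smooth at $y_i$, and the analytic implicit function theorem yields an open $V_i\ni p_i$ and an analytic section $\tau_i:V_i\to Y$ with $\tau_i(p_i)=y_i$. After shrinking the $V_i$ to be pairwise disjoint and patching the $\tau_i$, I obtain an analytic section over $U:=V_1\cup\cdots\cup V_n$, so $U\in\mathcal U$ contains $\{p_1,\ldots,p_n\}$.

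For the cocycle, I would set $\sigma_{U,U'}:=\phi_{\tau_{U'}}(\tau_U)\in X^\circ(U\cap U')$ for any $U,U'\in\mathcal U$ with chosen sections $\tau_U,\tau_{U'}$. Each composition $\phi_{\tau_V}\circ\phi_{\tau_{V'}}^{-1}$ is translation by $\sigma_{V',V}$, so the identity $\phi_{\tau_{U''}}\circ\phi_{\tau_U}^{-1}=(\phi_{\tau_{U''}}\circ\phi_{\tau_{U'}}^{-1})\circ(\phi_{\tau_{U'}}\circ\phi_{\tau_U}^{-1})$ on triple intersections forces the \v{C}ech cocycle identity. By construction, the twist $X^\sigma$ has local trivializations canonically matching those of $Y$, which gives $Y=X^\sigma$.

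The hard part will be the first step: ensuring that the fiberwise Abel-Jacobi construction assembles into a morphism of $U$-schemes and that the resulting map is an isomorphism across fibers where $\tau$ meets the singular point of a cuspidal or nodal fiber. This should follow by invoking the representability of the relative compactified Picard functor of $Y/C$ by $X$, which produces $\phi_\tau$ directly as a $U$-morphism, and then upgrading fiberwise bijectivity to a global isomorphism using flatness and properness of $Y|_U,X|_U\to U$.
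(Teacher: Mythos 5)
Your proposal is correct and follows essentially the same route as the paper: trivialize $Y$ locally via analytic sections and the compactified Abel--Jacobi map to $X=\overline{\operatorname{Pic}}^0_{Y/C}$, arrange the trivializing opens so that any $n$ points lie in one of them (disjoint small neighborhoods with local sections, patched), and read off the cocycle from differences of sections. The only point worth making explicit is that any local section automatically meets each integral fiber in a smooth point (its local intersection number with the fiber is $1$, while the fiber has multiplicity $\ge 2$ at its singular point), which is what guarantees both that $\phi_\tau$ carries $\tau$ to the identity section and that $\sigma_{U,U'}$ takes values in $X^\circ$ rather than merely in $X$.
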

    \begin{proof}
        Since we are working in the analytic topology, we can choose some $n$-regular cover $\mathcal U$ such that for all $U\in\mathcal U$, $Y_U\to U$ has a section $s_U$, necessarily through its smooth locus.
        Let $X=\overline{\operatorname{Pic}}_{Y/C}^0$ and let $s$ be its zero-section.
        We have isomorphisms $\psi_U:Y_U\to X_U$ sending $x\in Y_U$ (say over $p\in U$) to the point corresponding to $\mathcal O_{Y_p}(x-s_U)$.
        We can then set $\sigma_{U,V}=\psi_U|_{U\cap V}(s_V|_{U\cap V})$.
    \end{proof}
    Let us now fix $X,Y,\sigma,\mathcal U$ as in the lemma.
    Since $\mathcal U$ is $n$-regular, $\mathcal U^{[n]}=\{U^{[n]}:U\in\mathcal U\}$ is an open cover of $C^{[n]}$.
    Note also that $U^{[n]}\cap V^{[n]}=(U\cap V)^{[n]}$.

    For $U,V\in\mathcal U$, $\sigma_{U,V}$ is a section of $X_{U\cap V}^\circ\to U\cap V$.
    We set $\Sigma_{U^{[n]}, V^{[n]}}=(\sigma_{U,V})^{[n]}$ which is now a section of $X^{[n],\pi}_{U^{[n]}\cap V^{[n]}}\to U^{[n]}\cap V^{[n]}$.
    \begin{proposition}
        The cochain $\Sigma$ of $X^{[n],\pi}$ is a cocycle, and $Y^{[n]}=(X^{[n]})^{\Sigma}$.
    \end{proposition}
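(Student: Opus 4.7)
The strategy is to reduce everything to a single compatibility: for a base $T\to C$ and two sections $\sigma,\tau\colon T\to X^\circ$, the induced $T^{[n]}$-points $\sigma^{[n]},\tau^{[n]}$ of $X^{[n],\pi}$ (obtained by pulling back the graphs $\Gamma_\sigma,\Gamma_\tau$ along $W\hookrightarrow T$ for $W\in T^{[n]}$) satisfy
\[
\mu\bigl(\sigma^{[n]},\tau^{[n]}\bigr) = (\sigma\tau)^{[n]}
\]
as $T^{[n]}$-points of $X^{[n],\pi}$. Granting this homomorphism property, both parts of the proposition fall out immediately.

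First I would prove the compatibility. Both sides are morphisms to the separated scheme $X^{[n]}$, so the identity may be checked on a Zariski-dense set of $k$-points of $T^{[n]}$. Restricting to the open locus consisting of reduced subschemes $W = p_1+\cdots+p_n$ with distinct $p_i$ over smooth fibers, the section $\sigma^{[n]}(W)$ splits as $\sigma(p_1)\sqcup\cdots\sqcup\sigma(p_n)$, and Proposition \ref{multiply} shows that $\mu$ acts on such reduced clusters by componentwise multiplication in the groups $X^\circ_{p_i}$. This matches $(\sigma\tau)^{[n]}(W)$ on the nose.

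Granting the compatibility, the cocycle identity $\Sigma_{U^{[n]},V^{[n]}}\cdot\Sigma_{V^{[n]},W^{[n]}}=\Sigma_{U^{[n]},W^{[n]}}$ on the triple intersection $(U\cap V\cap W)^{[n]}$ is just the image of $\sigma_{U,V}\cdot\sigma_{V,W}=\sigma_{U,W}$ (which holds because $Y=X^\sigma$) under $(-)^{[n]}$. For the identification $Y^{[n]}=(X^{[n]})^\Sigma$, the analytic isomorphisms $\psi_U\colon Y_U\xrightarrow{\sim} X_U$ induce $\psi_U^{[n]}\colon Y^{[n]}_{U^{[n]}}\xrightarrow{\sim} X^{[n]}_{U^{[n]}}$, exhibiting $Y^{[n]}$ as a gluing of the $X^{[n]}_{U^{[n]}}$ along the transitions $\psi_V^{[n]}\circ(\psi_U^{[n]})^{-1}$. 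The preceding lemma identifies $\psi_V\circ\psi_U^{-1}$ with translation by $\sigma_{U,V}$ in the $X^\circ$-action on $X$; applying $(-)^{[n]}$ and the compatibility yields that the Hilbert-scheme transition is exactly the $\mu$-action by $\sigma_{U,V}^{[n]}=\Sigma_{U^{[n]},V^{[n]}}$, which is the gluing datum defining $(X^{[n]})^\Sigma$.

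The main obstacle will be the homomorphism property in the first step. Since $\mu$ was defined indirectly via a tensor-product construction on Fourier--Mukai images with a normalization twist (Proposition \ref{tensor_adjust}), one must verify that the normalization does not introduce a discrepancy with the naive componentwise multiplication; this is where Propositions \ref{section_of_fibration} and \ref{multiply} combine to make the computation transparent on the reduced locus, after which separatedness of $X^{[n]}$ does the rest.
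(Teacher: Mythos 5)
Your proposal is correct and follows essentially the same route as the paper: both arguments come down to comparing the automorphism of $X^{[n]}_{(U\cap V)^{[n]}}$ given by $\mu$-translation by $\Sigma_{U^{[n]},V^{[n]}}$ with the automorphism induced functorially from translation by $\sigma_{U,V}$ on $X_{U\cap V}$, and both verify the comparison on the dense locus of reduced subschemes of $X^\circ$ with pairwise distinct fiber images, concluding by separatedness. One small point of precision: the compatibility you isolate, $\mu(\sigma^{[n]},\tau^{[n]})=(\sigma\tau)^{[n]}$ for sections of $X^\circ$, suffices for the cocycle identity but is literally weaker than what the gluing step needs, since the transition maps of $(X^{[n]})^\Sigma$ act on all of $X^{[n]}_{(U\cap V)^{[n]}}$ (including non-reduced subschemes and subschemes passing through singular points of fibers), not only on points of section form; you should formulate the compatibility as an equality of the two automorphisms of $X^{[n]}$ over $(U\cap V)^{[n]}$ --- the $\mu$-action of $\sigma_{U,V}^{[n]}$ versus the Hilbert-scheme functoriality of the translation on $X$ --- which your same density-plus-separatedness check proves verbatim, and which is exactly how the paper phrases the reduction.
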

    \begin{proof}
        The group structure on $X^{[n],\pi}$ allows $\Sigma_{U^{[n]}, V^{[n]}}$ to define an automorphism of $X^{[n],\pi}_{U^{[n]}\cap V^{[n]}}=(X_{U\cap V})^{[n]}$.
        On the other hand, the group structure on $X^\circ$ allows $\sigma_{U,V}$ to define an automorphism of $X_{U\cap V}$, hence induces an automorphism of $(X_{U\cap V})^{[n]}$.

        To show both claims, it suffices to check that these two automorphisms coincide.
        Since everything involved is separated, this can be checked on an open set, which we take to be the locus of reduced length $n$-subschemes contained in $X^\circ$, such that no two belongs in the same fiber of $\pi$.
        The two automorphisms clearly agree here.
    \end{proof}
    This then identifies the Hilbert scheme of any elliptic surface $Y\to C$ with integral fibers as a Tate-Shafarevich twist of the Hilbert scheme of its Jacobian $X=\overline{\operatorname{Pic}}_{Y/C}^0\to C$, with the cocycle taking value in $X^{[n],\pi}$.

    Let us now see how the kernel $\mathcal P_n^{\rm BKR}$ behaves under such a twist.
    For $U\in\mathcal U$, we set $\mathcal Q_U$ for the restriction of $\mathcal P_n^{\rm BKR}$ to $X_{U^{[n]}}\times_{U^{[n]}}X_{U^{[n]}}$.
    \begin{proposition}
        Under the Tate-Shafarevich twist by $\Sigma$, $\mathcal Q_U$ glues to an $(1\boxtimes\alpha)$-twisted sheaf on $Y^{[n]}\times_{C^{[n]}}X^{[n]}$ for some $\alpha\in\operatorname{Br}(X^{[n]})$.
        It establishes an exact equivalence $D^b\operatorname{Coh}(Y^{[n]})\cong D^b\operatorname{Coh}(X^{[n]},\alpha)$.
    \end{proposition}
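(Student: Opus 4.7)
The plan is to exploit the theorem of the square (Proposition~\ref{tots}) to control how the local kernels $\mathcal Q_U$ and $\mathcal Q_V$ differ on overlaps. For any section $\Sigma$ of $X^{[n],\pi}$ over an open $W\subset C^{[n]}$, restricting the theorem of the square along $(\Sigma,\operatorname{id},\operatorname{id})$ yields an isomorphism
\[(t_\Sigma\times\operatorname{id})^\ast\mathcal P_n^{\rm BKR}\cong\operatorname{pr}_2^\ast L_\Sigma\otimes\mathcal P_n^{\rm BKR}\]
on $X^{[n]}_W\times_WX^{[n]}_W$, where $t_\Sigma$ is translation by $\Sigma$ on the first factor and $L_\Sigma:=\Sigma^\ast\mathcal P_n^{\rm BKR,\circ}$ is a line bundle on $X^{[n]}_W$. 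Applying this to each $\Sigma_{U,V}$ produces line bundles $L_{U,V}$ on $X^{[n]}_{U^{[n]}\cap V^{[n]}}$, and the cocycle relation $\Sigma_{U,V}\cdot\Sigma_{V,W}=\Sigma_{U,W}$ combined with two iterates of the displayed formula gives $L_{U,W}\cong L_{U,V}\otimes L_{V,W}$ on triple overlaps.

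Next, I will convert the $\mathbf{Pic}$-valued \v{C}ech $1$-cocycle $\{L_{U,V}\}$ on $X^{[n]}$ into a Brauer class. After further refinement, trivialize each $L_{U,V}$ by a nonvanishing section $s_{U,V}$; the combination $s_{U,V}\cdot s_{V,W}\cdot s_{U,W}^{-1}$ is then a \v{C}ech $2$-cocycle with values in $\mathcal O_{X^{[n]}}^\times$, defining $\alpha\in\operatorname{Br}(X^{[n]})$. These trivializations promote the displayed isomorphism into concrete gluing data on the local sheaves $(\iota_U\times\operatorname{id})^\ast\mathcal Q_U$, where $\iota_U:Y^{[n]}_{U^{[n]}}\xrightarrow{\sim}X^{[n]}_{U^{[n]}}$ is the identification used to build the Tate--Shafarevich twist. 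Because the discrepancies live entirely on the second factor, the glued object is tautologically a $(1\boxtimes\alpha)$-twisted sheaf on $Y^{[n]}\times_{C^{[n]}}X^{[n]}$.

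For the equivalence, Lemma~\ref{relative_fm_bc} shows that each $\mathcal Q_U$ is the kernel of an autoequivalence of $D^b\operatorname{Coh}(X^{[n]}_{U^{[n]}})$ obtained by restricting the global autoequivalence $\Phi_{\mathcal P_n^{\rm BKR}}$ via base change. Transporting through $\iota_U$ gives local equivalences $D^b\operatorname{Coh}(Y^{[n]}_{U^{[n]}})\cong D^b\operatorname{Coh}(X^{[n]}_{U^{[n]}})$ which assemble, via the twisted kernel constructed above, into the claimed global equivalence $D^b\operatorname{Coh}(Y^{[n]})\cong D^b\operatorname{Coh}(X^{[n]},\alpha)$. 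The main obstacle I anticipate is the bookkeeping: verifying that the Brauer-theoretic construction of $\alpha$ from $\{L_{U,V}\}$ matches exactly the cocycle failure of the concrete gluing isomorphisms on the kernel, so that the glued object is genuinely a $(1\boxtimes\alpha)$-twisted sheaf (and not twisted on both factors), and that the induced functor is a global equivalence rather than merely fully faithful on each patch.
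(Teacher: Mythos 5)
Your construction of the twisted sheaf follows the paper's proof essentially verbatim: restrict the theorem of the square (Proposition~\ref{tots}) along each section $\Sigma_{U^{[n]},V^{[n]}}$ to get $(\Sigma\times\operatorname{id})^\ast\mathcal P_n^{\rm BKR}\cong\operatorname{pr}_2^\ast L_{U,V}\otimes\mathcal P_n^{\rm BKR}$ with $L_{U,V}$ the restriction of $\mathcal P_n^{\rm BKR,\circ}$ along the section, use the cocycle condition on $\Sigma$ to get $L_{U,V}\otimes L_{V,W}\cong L_{U,W}$ on triple overlaps, and read off a gerbe/Brauer class $\alpha$ so that the $\mathcal Q_U$ glue to a $(1\boxtimes\alpha)$-twisted sheaf; since the discrepancy $\operatorname{pr}_2^\ast L_{U,V}$ lives only on the second factor, the twist is indeed only on the $X^{[n]}$ side. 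That part is fine.

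The gap is in the final step. You assert that the local equivalences $D^b\operatorname{Coh}(Y^{[n]}_{U^{[n]}})\cong D^b\operatorname{Coh}(X^{[n]}_{U^{[n]}})$ ``assemble'' into a global equivalence via the twisted kernel, but derived equivalences do not glue from local data: having a kernel that restricts over each $U^{[n]}$ to the kernel of an equivalence does not by itself give fully faithfulness, let alone essential surjectivity, of the global twisted Fourier--Mukai functor (you half-acknowledge this worry but offer no argument). The paper avoids any gluing of functors: it applies C\u{a}ld\u{a}raru's criterion for twisted Fourier--Mukai kernels (\cite[Theorem 3.2.1]{caldararu}) directly to the glued kernel. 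The orthogonality and simplicity conditions on the images of skyscrapers are local and come from the fact that the kernel is locally the kernel of an equivalence, but the remaining condition --- invariance of these objects under tensoring with the canonical bundle (the Serre functor condition needed for essential surjectivity) --- is a genuinely global input. The paper supplies it by noting that $\omega_{X^{[n]}}$ is pulled back from $X^{(n)}$ and hence, by the canonical bundle formula for the elliptic surface, from $C^{[n]}$, so it is trivial on the fibers of $\pi^{[n]}$, on which the kernel's fiber objects are supported. Your proposal never addresses this condition, and without it the argument only yields (at best) a functor that is an equivalence locally over the base, not the claimed global equivalence $D^b\operatorname{Coh}(Y^{[n]})\cong D^b\operatorname{Coh}(X^{[n]},\alpha)$.
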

    \begin{proof}
        This is essentially a formal consequence of theorem of the square.
        Indeed, for any $U,V\in\mathcal U$, we have the identity
        \[(\Sigma_{U^{[n]},V^{[n]}}\times\operatorname{id})^\ast\mathcal P_n^{\rm BKR}|_{U^{[n]}\cap V^{[n]}}=\operatorname{pr}_2^\ast L_{U,V}\otimes\mathcal P_n^{\rm BKR}|_{U^{[n]}\cap V^{[n]}}\]
        by Proposition \ref{tots}, where $L_{U,V}$ is a line bundle on $X_{U^{[n]}\cap V^{[n]}}^{[n]}$ given by restricting $\mathcal P_n^{\rm BKR}$ to
        \[\Sigma_{U^{[n]},V^{[n]}}\times_{U^{[n]}\cap V^{[n]}}X^{[n]}_{U^{[n]}\cap V^{[n]}}\cong X^{[n]}_{U^{[n]}\cap V^{[n]}}.\]

        Since $\Sigma$ is a cocycle, $L_{U,V}\otimes L_{V,W}\cong L_{U,W}$ on the triple intersection (by further restricting the identity, for example).
        This collection of line bundles then defines a gerbe, and thus a Brauer class $\alpha\in\operatorname{Br}(X^{[n]})$ (see \cite{caldararu}), and the identity means that the collection $\mathcal Q_U$ becomes a $(1\boxtimes\alpha)$-twisted sheaf on $Y^{[n]}\times_{C^{[n]}}X^{[n]}$.

        Note, finally, that the canonical bundle of $X^{[n]}$ is a line bundle pulled back from $X^{(n)}$, which is a line bundle pulled back from $C^{(n)}=C^{[n]}$ by the canonical bundle formula.
        Hence the kernel establishes an equivalence by \cite[Theorem 3.2.1]{caldararu}.
    \end{proof}
    \bibliographystyle{plain}
    \bibliography{ref.bib}
\end{document}